\title{Valuations on the character variety: \\ Newton polytopes and Residual Poisson Bracket}
\author{Julien March\'e}
\address{Sorbonne Universit\'e, IMJ-PRG, 75252 Paris c\'edex 05, France}
\email{julien.marche@imj-prg.fr}
\author{Christopher-Lloyd Simon}
\address{Universit\'e de Lille, CNRS, UMR 8524 - Laboratoire Paul Painlevé, F-59000 Lille, France}
\email{christopher-lloyd.simon@ens-lyon.fr}
\date{}
\newtheorem{definition}{Definition}
\newtheorem{lemma}{Lemma}
\newtheorem{proposition}{Proposition}
\newtheorem{theorem}{Theorem}
\newtheorem{corollary}{Corollary}
\newtheorem{remark}{Remark}
\newtheorem{theoremintro}{Theorem}
\newtheorem{propintro}{Proposition}
\newtheorem*{lemma*}{Lemma}
\newtheorem*{proposition*}{Proposition}
\newtheorem*{definition*}{Definition}
\newcommand{\C}{\mathbb{C}}
\newcommand{\R}{\mathbb{R}}
\newcommand{\Q}{\mathbb{Q}}
\newcommand{\Z}{\mathbb{Z}}
\newcommand{\ML}{\mathrm{ML}}
\newcommand{\MC}{\mathrm{MC}}
\newcommand{\Mm}{\mathcal{M}}
\newcommand{\Oo}{\mathcal{O}}
\newcommand{\Bb}{\mathcal{B}}
\renewcommand{\H}{\mathbb{H}}
\renewcommand{\P}{\mathbb{P}}
\DeclareMathOperator{\Hom}{Hom}
\DeclareMathOperator{\SL}{SL}
\DeclareMathOperator{\tr}{tr}
\DeclareMathOperator{\Ad}{Ad}
\DeclareMathOperator{\Span}{Span}
\DeclareMathOperator{\Card}{Card}
\DeclareMathOperator{\Vol}{Vol}
\DeclareMathOperator{\Mod}{Mod}
\DeclareMathOperator{\ratrk}{\mathrm{rat.rk}}
\DeclareMathOperator{\degtr}{\mathrm{tr.deg}}
\DeclareMathOperator{\NewSet}{\Delta}
\DeclareMathOperator{\Supp}{Supp}
\begin{document}

\maketitle

\begin{abstract}
We study the space of measured laminations $\ML$ on a closed surface from the valuative point of view.
We introduce and study a notion of Newton polytope for an algebraic function on the character variety. We prove for instance that trace functions have unit coefficients at the extremal points of their Newton polytope.
Then we provide a definition of tangent space at a valuation and show how the Goldman Poisson bracket on the character variety induces a symplectic structure on this valuative model for $\ML$. Finally we identify this symplectic space with previous constructions due to Thurston and Bonahon.

\paragraph{Keywords.} Character variety, surface group, valuation, Newton polytope, measured lamination, real tree, Goldman Poisson bracket, symplectic structure, skein algebra.
\end{abstract}

% \newpage

\section*{Introduction}

\subsection*{The algebra of functions on the character variety}

Let $S$  be a closed oriented surface of genus $g\ge 2$. Its character variety $X$ is the quotient of the space $\Hom(\pi_1(S),\SL_2(\C))$ by the equivalence relation identifying $\rho_1$ and $\rho_2$ if and only if $\tr \rho_1(\gamma)=\tr \rho_2(\gamma)$ for all $\gamma\in \pi_1(S)$. 
By construction, it is an affine variety whose ring of functions $\C[X]$ is generated by the trace functions $t_\gamma \colon \rho \mapsto \tr \rho(\gamma)$ for $\gamma\in \pi_1(S)$.
The function $t_\gamma$ only depends on the conjugacy class of $\gamma$ up to inversion, that is on the free homotopy class of the corresponding unoriented loop.

These trace functions are not algebraically independent: %of course $t_{1} = 2$ is a constant, but more importantly 
the famous identity $\tr(AB)+\tr(AB^{-1})=\tr(A)\tr(B)$ for $A,B\in \SL_2(\C)$ implies, for instance, that if $\alpha$ and $\beta$ represent simple loops intersecting once, then $$t_\alpha t_\beta=t_\gamma+t_\delta$$ where $\gamma$ and $\delta$ are elements in $\pi_1(S)$ representing the simple curves obtained by smoothing the intersection between $\alpha$ and $\beta$ in the two possible ways. 

This phenomenon generalizes as follows. Given a multiloop $\alpha$, that is a multiset $\{\alpha_1,\ldots,\alpha_n\}$ of non-trivial loops $\alpha_i\in \pi_1(S)$, the function $t_\alpha=t_{\alpha_1} t_{\alpha_2}\cdots t_{\alpha_n}$ can be uniquely decomposed as a linear combination  
\begin{equation}\label{decomposition}
t_\alpha=\sum m_\mu t_\mu
\end{equation}
where each $\mu$ is a multicurve, that is a (possibly empty) multiloop represented by pairwise disjoint, simple, non-trivial loops. This means that the set $\MC$ of multicurves indexes a linear basis for the algebra of characters $\C[X]$ which is privileged from the topological viewpoint; it is also invariant under the (algebraic) automorphism group of $\C[X]$, as we proved in \cite{MS_Aut(CV)_2020}.

It is an old problem to understand the algebraic structure of $\C[X]$, whose study has been initiated by Fricke and Vogt in the late 19th century, and revisited in the seventies by the work of Procesi, Horowitz and Magnus among others (see \cite{Magnus} for a review). 
One approach is to investigate the coefficients $m_\mu$ of the functions $t_\alpha$.

In this article, we define the Newton set $\NewSet(t_\alpha)\subset \MC$ of $t_\alpha$, in analogy with the extremal points of the ordinary Newton polytope of a polynomial, as follows.

\begin{definition*}[Newton Set]
For $f=\sum m_\mu t_\mu$ decomposed in the basis of multicurves, we define its support as $\Supp(f)=\{\mu\in \MC, m_\mu\ne 0\}$.

We say that $\mu\in \Supp(f)$ is extremal in $f$ if there exists a multicurve $\xi$ such that $i(\xi,\mu)>i(\xi,\nu)$ for all $\nu\in\Supp(f)$ distinct from $\mu$.

The Newton set $\NewSet(f)$ is the set of extremal multicurves in $f$.
\end{definition*} 

In this definition, $i(\cdot,\cdot)$ denotes the geometric intersection number, and standard properties of measured laminations imply that $\xi$ can be replaced by a simple curve or a measured lamination.
Our first result is the following.

\begin{theoremintro}[Trace functions are unitarity]
\label{ThmA}
For every multiloop $\alpha=\{\alpha_1,\ldots,\alpha_n\}$% with $\alpha_i\in \pi_1(S)\setminus\{1\}$
, the function $t_\alpha$ is unitary in the sense that $m_\mu=\pm 1$ for all $\mu\in \NewSet(t_\alpha)$. 
\end{theoremintro}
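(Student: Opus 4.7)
I would argue by induction on the total crossing number $c(\alpha)$, that is, the number of double points (self and mutual) of a minimal-position representative of $\alpha$. If $c(\alpha)=0$, then $\alpha$ is itself a multicurve and the theorem is trivial: $\NewSet(t_\alpha)=\{\alpha\}$ with coefficient $1$. For the inductive step, choose a crossing $p$ of a minimal representative, write the two local strands through $p$ as based loops $A,B$ (either the halves of a self-intersecting component, or pieces of two different components joined at $p$), and apply the Fricke identity $\tr(A)\tr(B)=\tr(AB)+\tr(AB^{-1})$ to obtain
\[
t_\alpha\;=\;t_{\alpha_+}\;\pm\;t_{\alpha_-},
\]
where $\alpha_+,\alpha_-$ are the two smoothings of $\alpha$ at $p$. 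Both satisfy $c(\alpha_\pm)<c(\alpha)$, so the induction hypothesis delivers $m^\pm_\mu=\pm 1$ on their respective Newton sets.

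Writing $t_{\alpha_\pm}=\sum m^\pm_\mu t_\mu$, the coefficient of $t_\mu$ in $t_\alpha$ is $m_\mu=m^+_\mu\pm m^-_\mu$; we must show this is $\pm 1$ for $\mu\in\NewSet(t_\alpha)$. The geometric input I would rely on is the intersection identity $i(\xi,\alpha)=\max\{i(\xi,\alpha_+),\,i(\xi,\alpha_-)\}$ for every transverse test multicurve $\xi$ — the standard bigon-free observation that the smoothing at $p$ compatible with $\xi$ preserves the intersection count, while the other smoothing strictly decreases it. Given a witness $\xi$ for the extremality of $\mu$ in $t_\alpha$, this should promote $\mu$ to an extremal vertex in whichever of $t_{\alpha_\pm}$ contains it in its support, allowing the induction hypothesis to identify the individual $m^\pm_\mu$ as lying in $\{0,\pm 1\}$.

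\textbf{Main obstacle.} The delicate case is when $\mu$ lies in both $\Supp(t_{\alpha_+})$ and $\Supp(t_{\alpha_-})$ with contributions that would sum to $\pm 2$; the cancellation case summing to $0$ is harmless, since it would simply remove $\mu$ from $\Supp(t_\alpha)$. The resolution I envision is to choose the smoothing point $p$ and the witness $\xi$ compatibly so that $i(\xi,\alpha_+)\ne i(\xi,\alpha_-)$, which should force the extremal multicurves detected by $\xi$ in each smoothing into disjoint subsets of $\MC$. Realizing this compatibility — perhaps by viewing the situation through the valuations $v_\xi$ announced in the paper's title, so that $v_\xi(t_{\alpha_+})\ne v_\xi(t_{\alpha_-})$ singles out exactly one smoothing as the dominant contribution to the $t_\mu$ coefficient — is where I expect the real work, and will likely leverage the valuation-theoretic framework developed in later sections.
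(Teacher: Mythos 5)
Your outer argument---induction on the crossing number, the skein/trace decomposition $t_\alpha=\pm t_{\alpha_+}\pm t_{\alpha_-}$, and the Smoothing Lemma $v(t_\alpha)=\max\bigl(v(t_{\alpha_+}),v(t_{\alpha_-})\bigr)$---coincides with the paper's, and you have correctly isolated the one genuine difficulty: you need the two smoothings to have distinct $v$-lengths so that the coefficient of $\mu$ is inherited from exactly one of them. But the fix you sketch, choosing the crossing $p$ and the witness $\xi$ compatibly, is where the argument would fail. A fixed multicurve $\xi$ may well satisfy $i(\xi,\alpha_+)=i(\xi,\alpha_-)$ at every crossing; and even when this can be avoided at one crossing, the recursion would then require a fresh compatible choice of $p$ (or of $\xi$) at every further smoothing, with no guarantee that the same $\mu$ stays extremal. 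In fact no multicurve witness can work uniformly: $i(\xi,\xi)=0$, so $v_\xi$ is never positive, let alone acute, and somewhere along the recursion the needed strict inequality must fail.

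The paper's resolution is to fix \emph{one} valuation $v\in\ML$ once and for all, used throughout the entire induction, chosen to be simultaneously \emph{strict} (distinct multicurves have distinct $v$-lengths, Lemma \ref{strict-residual}) and \emph{acute} (Definition \ref{acute}; by Lemma \ref{uniquesmoothing} this is equivalent to $v(t_{\alpha_+})\neq v(t_{\alpha_-})$ for every smoothing of every taut multiloop). Acuteness supplies the needed inequality at \emph{every} crossing with no clever choice of $p$; strictness ensures $\mu$ is the unique $v$-extremal multicurve of $t_{\alpha_+}$, so the induction hypothesis applies to it. The existence of such a $v$ witnessing the extremality of a given $\mu$ uses that strict acute valuations are dense (indeed of full measure) in $\ML$---established via the identification of acute valuations with maximal laminations in Proposition \ref{maxacutriv}---together with the openness of the extremality condition. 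This is precisely why the witness $\xi$ in the definition of extremality must be allowed to range over all of $\ML$, not merely over multicurves. Once $v$ is strict and acute and $v(t_\mu)=v(t_{\alpha_+})>v(t_{\alpha_-})$, automatically $\mu\notin\Supp(t_{\alpha_-})$, so your $\pm 2$ case never arises and the induction closes.
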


To introduce our next result, recall that the algebra of functions $\C[X]$ carries a natural Poisson bracket stemming from the Atiyah-Bott-Weil-Petersson-Goldman symplectic structure on $X$. Following Goldman  \cite{Goldman_symplectic_1984}, for $\alpha,\beta\in \pi_1(S)$ it is given by the formula 
\begin{equation}\label{goldman}\{t_\alpha,t_\beta\}=\sum_{p\in \alpha\cap \beta}\epsilon_p(t_{\alpha_p\beta_p}-t_{\alpha_p\beta_p^{-1}})
\end{equation}
where the sum ranges over all intersection points $p$ between transverse representatives for $\alpha \cup \beta$ and $\epsilon_p$ is the sign of such an intersection, while $\alpha_p, \beta_p$ denote the homotopy classes of $\alpha, \beta$ based at $p$.

Intuitively, our second result says that the Newton polytope of $\{f,g\}$ is contained in the Newton polytope of $fg$, and interprets the coefficients of $\{f,g\}$ at the extremal multicurves of $fg$ in terms of the symplectic structure of the space $\ML$ of measured laminations on $S$.

\begin{theoremintro}[Extremal structure constants for the Poisson Bracket]
\label{ThmB}
Let $\mu$ and $\nu$ be two multicurves.
For $\xi \in \NewSet(t_\mu t_\nu)$ denote $m_\xi$ its coefficient in $\{t_\mu,t_\nu\}$. 

If $m_\xi \ne 0$ then $\xi \in \NewSet(\{t_\mu,t_\nu\})$. In any case,  $m_\xi = \{i_\mu,i_\nu\}(\lambda)$ is the Poisson bracket of the length functions $i_\mu(\cdot)=i(\mu,\cdot)$ on $\ML$ at any generic measured lamination $\lambda$ satisfying $i(\xi, \lambda)=i(\mu\cup\nu, \lambda)$. 
\end{theoremintro}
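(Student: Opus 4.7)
The plan is to derive both sides of the identity—the coefficient $m_\xi$ from Goldman's formula on the algebraic side, and $\{i_\mu,i_\nu\}(\lambda)$ from the symplectic structure on $\ML$ on the geometric side—and match them term by term. Applying the Leibniz rule to $\mu=\{\mu_1,\ldots,\mu_a\}$ and $\nu=\{\nu_1,\ldots,\nu_b\}$, and invoking \eqref{goldman} for each pair of constituent loops, the bracket $\{t_\mu,t_\nu\}$ decomposes as a sum over intersection points $p\in\mu_i\cap\nu_j$ of the form $\epsilon_p\, t_{\mu\setminus\mu_i}\, t_{\nu\setminus\nu_j}\,(t_{\gamma_p^+}-t_{\gamma_p^-})$, where $\gamma_p^\pm$ denote the two smoothings of the crossing at $p$. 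Each summand is a trace function on a multiloop, to which Theorem \ref{ThmA} applies.

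I would next show that the Newton polytope of each smoothed multiloop $(\mu\setminus\mu_i)\cup(\nu\setminus\nu_j)\cup\gamma_p^\pm$ is contained in that of $t_\mu t_\nu$: any test lamination $\lambda$ realizes $i(\gamma_p^\pm,\lambda)\le i(\mu_i\cup\nu_j,\lambda)$ with equality precisely on the smoothing compatible with $\lambda$ at $p$. Summed over $p$ this yields the inclusion $\NewSet(\{t_\mu,t_\nu\})\subset\NewSet(t_\mu t_\nu)$ and, fixing $\xi\in\NewSet(t_\mu t_\nu)$ together with a generic witness $\lambda$ satisfying $i(\xi,\lambda)=i(\mu\cup\nu,\lambda)$, shows that only the smoothings which are $\lambda$-compatible at every crossing can possibly contribute to the coefficient of $\xi$ in $\{t_\mu,t_\nu\}$. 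By Theorem \ref{ThmA}, each such contribution is $\pm 1$, determined by the Goldman sign $\epsilon_p$ and by which of $\gamma_p^+, \gamma_p^-$ the lamination $\lambda$ selects.

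It remains to recognize the resulting signed count as $\{i_\mu,i_\nu\}(\lambda)$. The Thurston-Bonahon symplectic form on $\ML$, expressed in train track weight coordinates at $\lambda$, evaluates on the length functions $i_\mu, i_\nu$ by a direct computation giving a signed sum indexed by the same set $\mu\cap\nu$ of transverse intersections, with each term reading off the local orientations of $\mu$, $\nu$ and $\lambda$ at $p$. Writing both the algebraic extraction from $\{t_\mu,t_\nu\}$ and the symplectic pairing on $\ML$ as sums over the same index set, one matches the signs term by term; the last claim of the theorem—that $m_\xi \ne 0$ implies $\xi\in\NewSet(\{t_\mu,t_\nu\})$—then follows from the existence of a witness $\lambda$ for $\xi$.

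The main obstacle will be controlling the sign combinatorics: Goldman's formula splits each crossing into a $+$ and a $-$ smoothing, whereas the Thurston-Bonahon pairing assigns a single sign from the configuration of $\mu,\nu,\lambda$ at $p$. Aligning these conventions—and verifying that when several crossings produce the same multicurve $\xi$ after applying \eqref{decomposition} their contributions combine as prescribed by the symplectic form rather than cancelling—is the delicate step. The genericity hypothesis on $\lambda$ is precisely designed to avoid degenerate intersection patterns where distinct smoothings would collapse to the same multicurve in ambiguous ways; this is where one expects to invoke the standard density of generic laminations in $\ML$ to conclude.
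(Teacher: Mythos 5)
Your outline correctly isolates the two halves of the theorem: (i) the Newton-set inclusion $\NewSet(\{t_\mu,t_\nu\})\subset\NewSet(t_\mu t_\nu)$, which you obtain from the smoothing inequality $i(\gamma_p^\pm,\lambda)\le i(\mu_i\cup\nu_j,\lambda)$ and which corresponds to Proposition~\ref{inegalitecrochet} in the paper; and (ii) the appeal to Theorem~\ref{ThmA} to see that each intersection point contributes $\pm 1$ to the coefficient $m_\xi$. So far this tracks the paper's logic (the paper packages step (ii) as the residual Poisson bracket $\{f,g\}_v = \{f,g\}/(fg)\bmod\Mm_v$, which picks out $m_\xi$ up to sign once unitarity of $t_\mu t_\nu$ is known, but the content is the same).

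The genuine gap is in the last step, where you identify the resulting signed count with $\{i_\mu,i_\nu\}(\lambda)$. You write that the Thurston--Bonahon form ``evaluates on the length functions $i_\mu,i_\nu$ by a direct computation giving a signed sum indexed by the same set $\mu\cap\nu$'' in train track coordinates. This is precisely the assertion that needs proof, and it is far from a routine computation. Concretely: (a) you have not said what $\{i_\mu,i_\nu\}(\lambda)$ means at the level of the PL manifold $\ML$, where length functions are only piecewise linear and the symplectic form lives on a tangent space $T_\lambda\ML$ that must first be defined; (b) even granting a train track chart, the differential $d_\lambda i_\mu$ is a nontrivial object (it records how the weight system changes as $\mu$ wraps around the branches), and translating the antisymmetric pairing of two such covectors into a sum over $\mu\cap\nu$ with matched Goldman signs is itself a theorem, not a bookkeeping exercise; (c) your own worry about distinct crossings collapsing onto the same multicurve $\xi$ after decomposing each $t_{\gamma_p^\pm}$ is exactly the kind of ambiguity that defeats a term-by-term matching, and genericity of $\lambda$ by itself does not resolve it---it controls the left side (the extraction of $m_\xi$) but not the indexing of the right side.

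The paper's route is designed to avoid this: rather than comparing two sums over $\mu\cap\nu$, it works at the level of the dual tree $T$ to $\lambda$. Intersection points correspond to transverse pairs of axes $(A_\alpha, gA_\beta)$ in $T$, Goldman's $\epsilon_p$ becomes a sign read off the cyclic orientation at a branch point of $T$, and the resulting expression is literally the Bonahon pairing $(x,\alpha x)\cdot_\pi(y,\beta y)$ (Theorem~\ref{iso2}), which Proposition~\ref{symplectomorphismes} then matches to the Thurston cup-product form via the homotopical model $T^o$. Your sketch compresses this entire chain into ``a direct computation'' without supplying it; as such the proposal is an accurate plan of what must be shown but not a proof of the crucial identification.
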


To rephrase the statement, we may decompose $\ML$ in subsets indexed by $\NewSet(t_\mu t_\nu)$, defined by $E_\xi=\{\lambda\in \ML\,|\, i(\xi, \lambda)=i(\mu\cup\nu, \lambda)\}$ which have disjoint interiors: then the function $\{i_\mu,i_\nu\}$ is constant over $E_\xi$ and equal to the coefficient of $\xi$ in $\{t_\mu,t_\nu\}$. 

Let us illustrate the theorem with the following example. The curves shown in Figure \ref{fig:sphere} satisfy 
$t_\alpha t_\beta=t_{c_1}t_{c_3}+t_{c_2}t_{c_4}-t_{\gamma}-t_{\delta}$ and $\{t_\alpha,t_\beta\}=2t_\delta-2 t_\gamma$, so we find that $\NewSet(t_\alpha t_\beta)=\{c_1\cup c_3,c_2\cup c_4,\gamma,\delta\}$, whereas $\Delta(\{t_\alpha,t_\beta\})=\{\gamma,\delta\}$.

The Newton set of $t_\alpha t_\beta$ decomposes $\ML$ into 4 domains where $i(\alpha\cup \beta,\lambda)$ is equal to the intersection of $\lambda$ with $c_1\cup c_3$ or $c_2\cup c_4$ or $\gamma$ or $\delta$ respectively. In the interior of these domains $\{t_\alpha,t_\beta\}$ takes the values $0,0,-2,2$ respectively. 

\begin{figure}[htbp]
    \centering
    \def\svgwidth{7cm}
    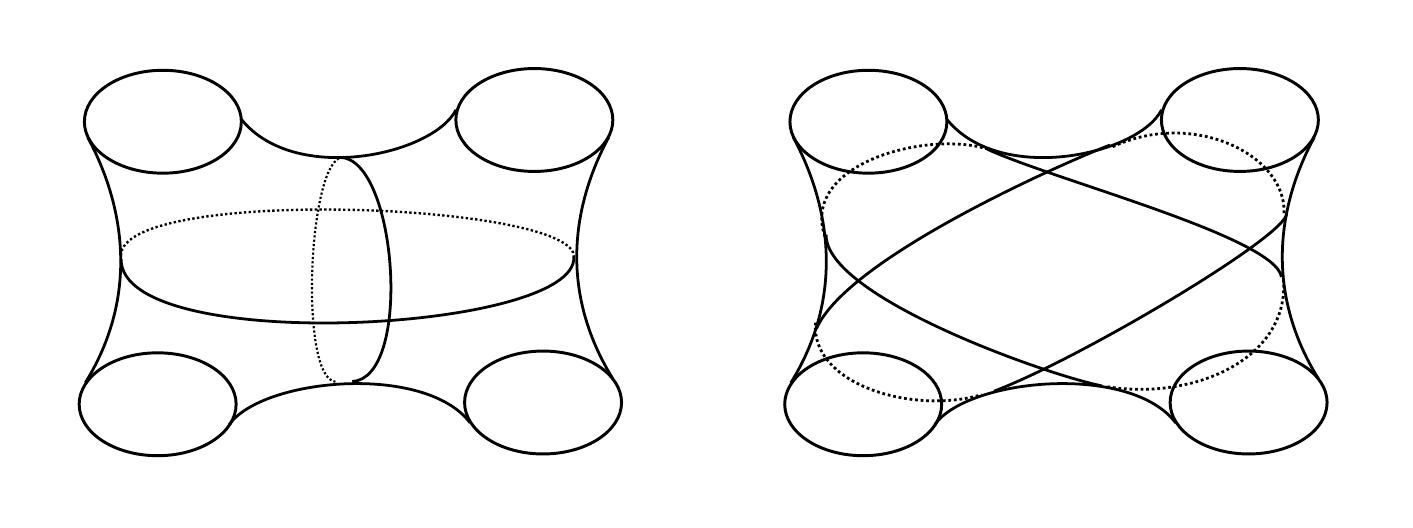
    \caption{Product and Poisson bracket in a sphere with four punctures.}
    \label{fig:sphere}
\end{figure}

Strong relations between the symplectic structures on $X$ and $\ML$ had already been observed, for instance in \cite{Papa-Penner_symplectique-bord_1991} or \cite{Sozen-Bonahon_2001}. However, the relation explained here seems to be new. 

\paragraph{} Beyond these two results, the purpose of this article is to investigate the space of measured laminations from the valuative viewpoint, in particular its symplectic structure. This study was motivated by a new characterisation of valuations associated to measured laminations that we obtained in \cite{MS_Aut(CV)_2020}.
We devote the remaining part of this introduction to an overview of our motivations, as well as the intermediate results that we obtained while revisiting the theory of measured laminations from the valuative viewpoint since we believe they are of independent interest. We take this as an opportunity to recall general ideas for the benefit of a wide audience.

\subsection*{The Newton polytope}

A leading analogy in this article is to think of the collection $(t_\mu)$ as a monomial basis in a polynomial algebra; keeping in mind that it is not stable under multiplication. 

Consider the degree $\deg_d$ defined for $d\in \R^n$ on the algebra $\C[t_1,\ldots,t_n]$ by 
$$\deg_d \left(\sum_\mu m_\mu t^\mu \right)=\max \{\langle m_\mu,d\rangle, m_\mu\ne 0\}$$
where $t^\mu=t_1^{\mu_1}\cdots t_n^{\mu_n}$, and $\langle\cdot,\cdot\rangle$ stands for the usual scalar product. This degree (the opposite of) a monomial valuation.
For $P\in \C[t_1,\ldots,t_n]$, a monomial $t^\mu$ is an extremal point of its usual Newton polytope $\NewSet(P)$ if $m_\mu\ne 0$ and for some $d\in \R^n$ the maximum defining $\deg_d$ is attained uniquely at $t^\mu$.

Our starting point is to replace the degree $\deg_d$ by the valuation associated to a measured lamination $\lambda$ in $S$.
For us a valuation will be a map $v:\C[X]\to \{-\infty\}\cup \R$ satisfying $v(fg)=v(f)+v(g)$ and $v(f+g)\le \max(v(f),v(g))$ for all $f,g\in \C[X]$.
We choose this convention, which is opposite to the usual one, to avoid crowding too many signs.  
Specialists will notice that such valuations centered at infinity of the affine variety $X$ take non-negative values on the ring $\C[X]$ of characters.

In a groundbreaking series of articles starting with \cite{Morgan-Shalen-I_1984}, Morgan-Shalen showed that the character variety $X$ can be compactified using valuations, in the spirit of the Riemann-Zariski compactification. In particular, the space of measured laminations viewed as Thurston's compactification of Teichmüller space, can be embedded in the space of valuations on $\C[X]$ with values in an archimedian group.
However, this embedding used a degeneration process and is not completely explicit: if $v$ is the valuation associated to a lamination $\lambda$, we clearly have $v(t_\mu)=i(\lambda,\mu)$ but it was not clear what should be $v(f)$ for a general element $f\in \C[X]$. 

In our previous article \cite{MS_Aut(CV)_2020}, we showed that the space of measured laminations $\ML$ can be identified with the space of \emph{simple} valuations $v:\C[X]\to \{-\infty\}\cup \R_{\ge 0}$, simple meaning monomial with respect to the multicurve basis:
\begin{equation}\label{max}
v\left(\sum m_\mu t_\mu\right)=\max\{v(t_\mu), m_\mu\ne 0\}.
\end{equation}
This justifies our definition for the Newton set of $f=\sum m_\mu t_\mu$ as the set of $\mu\in \Supp(f)$ such that the maximum in \eqref{max} is attained uniquely at $t_\mu$ for some $v\in \ML$.

For a concrete example, consider the particular case of a multiloop $\alpha$ contained in an incompressible pair of pants $P\subset S$. The subsurface $P$ contains only three simple curves, its boundary components, and they do not intersect each other. Denoting $t_1,t_2,t_3$ the trace functions along these components, we find that $t_\alpha\in \Z[t_1,t_2,t_3]$. This polynomial is often called the Fricke polynomial and has been much studied, see \cite[Section 2.2]{Magnus}. 
Now any valuation associated to a measured lamination on $S$ restricts to a monomial valuation on $\C[t_1,t_2,t_3]$, and we find that our Newton set corresponds to the extremal points of the usual Newton polytope. Even for such $\alpha \subset P$, it is not easy to determine $\NewSet(t_\alpha)$ from the $\alpha_i \in \pi_1(S)$, and the unitarity property is not an obvious one.

We believe that the Newton set $\NewSet(t_\mu t_\nu)$ for two multicurves $\mu$ and $\nu$ is worth studying as it gives some interesting information on the structure constants for the multiplication of $\C[X]$ in the basis of multicurves.
%
% These structures constants measure the extent to which the multicurve basis fails to be multiplicative.
%
As an illustration, we will show the following proposition. For multicurves $\mu,\nu \in \MC$, let $L_\mu(\nu)$ be the multicurve obtained from $\mu \cup \nu$ by smoothing every intersection with a left turn as we travel along a segment of $\mu$ which meets a segment of $\nu$. This ``product'' has been introduced and studied by Luo (see \cite{Luo_simple_2010} and references therein).

\begin{propintro}
For all multicurves $\mu,\nu \in \MC$, if $i(\mu,\nu)>0$ then $L_\mu(\nu)$ and $L_\nu(\mu)$ are distinct, and both belong to $\NewSet(t_\mu t_\nu)$ and $\NewSet(\{t_\mu, t_\nu\})$ simultaneously.
Moreover, the coefficients of $L_\mu(\nu)$ and $L_\nu(\mu)$ in $\{t_\mu,t_\nu\}$ are $\pm i(\mu,\nu)$.
\end{propintro}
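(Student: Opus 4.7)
The plan is to expand $t_\mu t_\nu$ and $\{t_\mu,t_\nu\}$ as sums indexed by the $2^n$ smoothings of the $n=i(\mu,\nu)$ intersection points, and then to use a bigon-counting argument on a neighbourhood of the four-valent graph $\mu\cup \nu$ to pin down which smoothings give extremal multicurves. For each function $S\colon \mu\cap \nu \to \{L,R\}$, write $\sigma_S$ for the multicurve obtained by performing the left-from-$\mu$ smoothing at $p$ when $S(p)=L$ and the right-from-$\mu$ smoothing otherwise; thus $L_\mu(\nu)$ and $L_\nu(\mu)$ correspond to the two constant functions $S\equiv L$ and $S\equiv R$.

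Iterating the trace identity $t_\alpha t_\beta = t_{\alpha\beta}+t_{\alpha\beta^{-1}}$ gives an expansion $t_\mu t_\nu = \sum_S \varepsilon_S\, t_{\sigma_S}$ with signs $\varepsilon_S$ determined by the number of components of $\sigma_S$. A local analysis in coordinates at each crossing shows that in Goldman's formula the sign $\epsilon_p$ absorbs the orientation-dependence of $\nu$ so that at every $p$ the right-from-$\mu$ smoothing enters with the opposite sign of the left-from-$\mu$ one. Expanding the remaining crossings in each summand of $\{t_\mu,t_\nu\}$ then yields an expression of the form
\[
\{t_\mu, t_\nu\}=\sum_S \varepsilon_S\, \bigl(|S^{-1}(R)|-|S^{-1}(L)|\bigr)\, t_{\sigma_S},
\]
which, assuming that no two smoothings yield the same multicurve near the extreme cases $S\equiv L$ and $S\equiv R$, gives coefficient $\mp i(\mu,\nu)$ to $L_\mu(\nu)$ and $\pm i(\mu,\nu)$ to $L_\nu(\mu)$.

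To justify that assumption and to prove extremality in $\NewSet(t_\mu t_\nu)$, observe that every $\sigma_S$ lies in a regular neighbourhood of $\mu\cup \nu$, so for any transverse measured lamination $\xi$
\[
i(\xi,\sigma_S)=i(\xi,\mu)+i(\xi,\nu)-2\sum_{p\in \mu\cap \nu} B_p(\xi, S(p)),
\]
where $B_p(\xi,L)$ and $B_p(\xi,R)$ count the arcs of $\xi$ through a small disk at $p$ whose entry-exit pattern, in local coordinates with $\mu$ horizontal and $\nu$ vertical, is along the northwest-southeast and northeast-southwest diagonals respectively. Choosing $\xi$ so that $B_p(\xi,R)>B_p(\xi,L)$ strictly at every $p$---such $\xi$ can be produced as a small perturbation of a filling multicurve, using the piecewise-linear structure of $\ML$ to adjust the local transverse type at each of the $n$ intersection points---the right-hand side is uniquely maximised by $S\equiv L$. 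This simultaneously shows that $L_\mu(\nu)\in \NewSet(t_\mu t_\nu)$, that $\sigma_S \neq L_\mu(\nu)$ for every other $S$, and, by the symmetric choice of $\xi$, the analogous statements for $L_\nu(\mu)$, including $L_\nu(\mu)\neq L_\mu(\nu)$.

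Combining both steps, the coefficients of $L_\mu(\nu)$ and $L_\nu(\mu)$ in $\{t_\mu,t_\nu\}$ are $\mp i(\mu,\nu)$ and $\pm i(\mu,\nu)$, both nonzero since $i(\mu,\nu)>0$, so Theorem~\ref{ThmB} places both multicurves in $\NewSet(\{t_\mu,t_\nu\})$. The main difficulty I foresee is the explicit construction of the witness lamination $\xi$: one must realise the prescribed local transverse-type bias simultaneously at all $n$ intersection points, which will likely rely on the measured-lamination and train-track techniques developed earlier in the paper.
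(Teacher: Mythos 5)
Your outline shares the Kauffman/Goldman expansion with the paper (the coefficient $\pm w_\xi=\pm\bigl(|S^{-1}(R)|-|S^{-1}(L)|\bigr)$ at each smoothing is exactly how the paper interprets the constant $\pm i(\mu,\nu)$), but for the extremality claim you take a genuinely different and harder route, and it has a real gap. You propose to write, for every transverse lamination $\xi$,
\[
i(\xi,\sigma_S)=i(\xi,\mu)+i(\xi,\nu)-2\sum_{p}B_p(\xi,S(p)),
\]
and then to cook up a $\xi$ with $B_p(\xi,R)>B_p(\xi,L)$ at every $p$. Two problems. First, that identity is in general only an inequality $\le$: smoothing every crossing the same way may create bigons between $\xi$ and $\sigma_S$ that are not localised at a single intersection point, so the right-hand side overcounts. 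You therefore cannot conclude $i(\xi,L_\mu(\nu))>i(\xi,\sigma_S)$ from RHS$(L)>$RHS$(S)$ without first proving that the equality holds at $S\equiv L$ (i.e.\ that the smoothed $L_\mu(\nu)$ is in minimal position with your chosen $\xi$). Second, the construction of a $\xi$ realising the prescribed bias at all $n$ crossings simultaneously is not carried out, and you flag this yourself as the main difficulty; this is precisely the work the paper is designed to avoid.

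The paper's proof of the extremality claim sidesteps both problems by a single clever choice: it tests against $v_\mu$, the valuation associated to the multicurve $\mu$ itself. Luo's Theorem 2.1(iii) gives the exact equality $i(\mu,L_\mu(\nu))=i(\mu,\nu)=i(\mu,L_\nu(\mu))$ (no bigon analysis needed for the winners), while for any other smoothing $\sigma_S$ a pair of adjacent crossings with opposite turns forces a bigon between $\sigma_S$ and $\mu$, so $i(\mu,\sigma_S)<i(\mu,\nu)$. Thus $v_\mu$ is maximised on $\Supp(t_\mu t_\nu)$ exactly at the two Luo smoothings, which are distinct by Luo's Corollary 8.2, $i(L_\mu(\nu),L_\nu(\mu))=2i(\mu,\nu)$. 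Since the locus $\{\lambda : i(\lambda,L_\mu(\nu))=i(\lambda,L_\nu(\mu))\}$ is a codimension-one PL subset of $\ML$, a small perturbation of $v_\mu$ off it in either direction produces the witness valuation you were trying to build by hand. I would recommend adopting this strategy: replace the bespoke $\xi$-construction by the test $\xi=\mu$ plus a PL perturbation, and cite Luo for the two exact intersection computations; your Kauffman/Goldman sign bookkeeping for the coefficient $\pm i(\mu,\nu)$ and the final invocation of Theorem~\ref{ThmB} can then stand as written.
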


It is worth noticing that we only talk about the Newton set and not about the Newton polytope, as we do not know any reasonable notion of convexity in $\ML$.
However, we can define the dual Newton polytope of a function $f\in \C[X]$ as $\NewSet^*(f)=\{v\in \ML \mid v(f)\le 1\}$. % and consider $\{v_\lambda\in \ML \mid i_\lambda (\NewSet^*)\le 1\}$.
Moreover, using the order structure we could define the poset of faces of $\NewSet(f)$, whose combinatorics may be a promising land of investigation but we did not go further in that direction.

\subsection*{Symplectic and combinatorial volumes of dual polytopes}

This paragraph only serves motivational purposes and does not claim new results, it may be skipped harmlessly. 

% For any real function on $\ML$, say the evaluation $v\mapsto v(f)$ of some function $f\in \C[X]$, we define the dual Newton polytope $\NewSet^*(f)\subset \ML$ of a function $f\in \C[X]$ as $\{v\in \ML \mid v(f)\le 1\}$.

Thurston's symplectic form on $\ML$ provides a notion of volume, thus we may ask for the topological meaning of the volume $\Vol \NewSet^*(t_\alpha)$ when $\alpha$ is a multiloop. It vanishes unless $\alpha$ is filling, meaning it intersects every simple curve.
%
% Masur showed that up to scaling, there is only one $\Mod(S)$-invariant measure on $\ML$ in its Lebesgue class. Therefore it is proportional to the Borelian measure which assigns to every open set $U$, the limit $\Vol(U) = \lim_R R^{3\chi}.\Card(RU \cap \MC)$. %as $R\to \infty$ of the number $\Card(R\cdot U \cap \MC)$ of integral measured laminations in the dilated set $r\cdot U$, rescaled by $R^{3 \chi}$.

When $\alpha$ is a filling multiloop, a celebrated theorem of M. Mirzakhani \cite{Mirzakhani_MCGorbits_2016} extended by Rafi and Souto \cite{Rafi-Souto_counting-problems_2017}, estimates the number of elements in its orbit under the modular group $\Mod(S)$ as a bound on their complexity tends to infinity.
More precisely, fix $\beta$ another filling multiloop, and denote $m_g>0$ the volume of the moduli space of hyperbolic metrics on $S$ for the Weil-Petersson form, then :
\[
\lim_{R\to \infty} \frac{\Card\{ \varphi\in \Mod(S) \mid i(\varphi(\alpha), \beta)\le R\}}{R^{6g-6}}=
%\frac{m(\beta)}{m_g}m(\alpha)
\frac{\Vol \NewSet^*(t_\beta) \Vol \NewSet^*(t_\alpha)}{m_g}
\]

% The result holds in the limit where $\beta$ approaches any filling geodesic current, for instance a measured lamination or a hyperbolic metric on $S$.

The identification between measured laminations and simple valuations implies, using Equation \eqref{max}, that the Newton dual polytope $\NewSet^*(f)$ of $f\in \C[X]$ equals the intersection of $\NewSet^*(t_\mu)$ for $\mu \in \NewSet(f)$. These ``elementary cones'' $\{v\in \ML \mid v(t_\mu) <1\}$ are described by explicit sets of linear inequalities in any PL chart of $\ML$, and the volume of their intersection is computable.
%$$m(\alpha)=\Vol\{v\in \ML \mid \forall \mu \in \NewSet(f),\: v(t_\mu)\le 1\}=\Vol \bigcap\limits_{\mu\in \NewSet(t_\alpha)}\{v\in \ML, v(t_\mu)\le 1\}.$$
This yields a constructive procedure to compute Mirzakhani's constant $\Vol \NewSet^*(t_\alpha)$, and shows that it depends only on $\NewSet(t_\alpha)$. It also shows that these volumes are rational.

A different motivation is that this Newton set - as the usual one - could have applications to the problem of counting solutions of algebraic equations in X. We wonder for instance if it helps estimating the number of solutions to a system of $6g-6$ equations $t_{\gamma_i}=x_i$ where $\gamma_1,\ldots,\gamma_{6g-6}\in \pi_1(S)$ and $x_1,\ldots,x_{6g-6}\in \C$. 
This could have interesting applications to $3$-dimensional topology, for instance to evaluate the number of characters of representations of $\pi_1(M)$ of a $3$-manifold $M$ from a Heegaard decomposition.

\subsection*{Measured laminations as valuations}

%There are constructions which allow to go from one description to another: for instance one can easily associate a dual tree to a measured lamination, but the reverse construction is quite difficult. In the same spirit, it is easy to define a length function from the lamination or the tree, but difficult to recover any of those directly from the length function. However, if one knows a valuation $v$ on its whole domain of definition $\C[X]$, the Bass-Serre construction yields an action of the fundamental group on a tree. 

In this article we study measured laminations using the tools of valuation theory. There are two well-known invariants for an archimedean valuation $v$: its rational rank defined as the dimension of the $\Q$-vector space generated by the group $\Lambda_v$ of its values (that is differences of lengths for the corresponding measured lamination), and the transcendence degree of its residue field $k_v$.
These invariants are related by the celebrated Abhyankar inequality $\ratrk(v)+\degtr(k_v)\le 6g-6$. Here we will show the following.
\begin{propintro}[Characterising strict valuations]
For a valuation $v$ associated to a measured lamination $\lambda$, the following properties are equivalent.
\begin{enumerate}
    \item Distinct multicurves $\mu$ and $\nu$ have distinct lengths: $i(\lambda,\mu)\ne i(\lambda,\nu)$.
    \item The residue field of $\C(X)$ at $v$ has transcendence degree $1$, or $k_v=\C$.
    \item The $\Q$-vector space generated by the set of lengths $i(\lambda,\mu)$ for $\mu\in \MC$ has dimension $6g-6$.
\end{enumerate}
\end{propintro}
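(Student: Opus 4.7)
The plan is to run the cycle $(3) \Rightarrow (2) \Rightarrow (1) \Rightarrow (3)$, exploiting the assumption that $v$ is simple in the sense of \eqref{max}: its value group is additively generated by $\{i(\lambda,\mu) : \mu \in \MC\}$, so $\ratrk(v)$ equals the $\Q$-dimension appearing in (3). This reformulates the equivalence in purely valuative terms with $\ratrk(v) = 6g-6$ in place of (3).

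For $(3) \Rightarrow (2)$, I would invoke Abhyankar's inequality $\ratrk(v) + \degtr(k_v) \le \dim X = 6g-6$: saturation on the left forces $\degtr(k_v) = 0$, and since $k_v$ is then algebraic over the algebraically closed field $\C$, we conclude $k_v = \C$.

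For $(2) \Rightarrow (1)$, suppose $\mu \ne \nu$ had equal length. Then $t_\mu / t_\nu$ would have valuation zero and hence descend to some $c \in k_v^\times = \C^\times$, making $v(t_\mu - c\,t_\nu) < v(t_\nu)$. This contradicts the monomial formula \eqref{max}, which yields $v(t_\mu - c\,t_\nu) = \max(v(t_\mu), v(t_\nu)) = v(t_\nu)$ whenever $\mu \ne \nu$.

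The main obstacle is $(1) \Rightarrow (3)$, which I would prove by contraposition. Assuming $\ratrk(v) < 6g-6$, fix a maximal birecurrent train track $\tau$ carrying $\lambda$ and let $W \subset \R^E$ denote its subspace of edge weights satisfying the switch conditions, together with its rank-$(6g-6)$ integer lattice $W \cap \Z^E$. The length form $n \mapsto \sum n_e \lambda_e$ sends this lattice into an additive subgroup of $\R$ of $\Q$-rank strictly less than $6g-6$, so it has a nonzero integer element $n$ in its kernel. Choosing any multicurve $\mu^0$ carried by $\tau$ with strictly positive weights $w^0$, the vectors $w^- = Kw^0$ and $w^+ = Kw^0 + n$ become non-negative for $K$ large and remain in $W \cap \Z^E$, so by the Penner--Harer correspondence between non-negative integer weight systems on $\tau$ and carried multicurves they represent distinct multicurves $\mu^\pm$ with $i(\lambda,\mu^+) = i(\lambda,\mu^-)$, violating (1). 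The delicate points will be verifying the Penner--Harer correspondence in the form needed and producing the reference multicurve $\mu^0$ with strictly positive weights, both standard in train track theory.
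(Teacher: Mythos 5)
Your implications $(3) \Rightarrow (2)$ (Abhyankar plus $\C$ algebraically closed) and $(2) \Rightarrow (1)$ (residue of $t_\mu/t_\nu$ contradicting condition 3 of Definition~\ref{defsimple}) are correct and identical in substance to the paper's: the latter is Lemma~\ref{strict-residual}, the former is the deduction immediately following it. The remaining implication $(1) \Rightarrow (3)$ is where you and the paper diverge. The paper postpones it and obtains it only after establishing $\dim T_v\ML = \ratrk(v)$ (Proposition~\ref{isomorphisms}) and then computing $\dim T_v\ML = \dim \Bb(T)_\pi = \dim H_1(S^o,\R^-) = 6g-6$ through the dual-tree and orientation-cover machinery of Sections~\ref{sectiontrees}--\ref{sectionthurston}. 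Your train-track contraposition would be a genuinely shorter and more elementary route, but as written it has a mistake at the crucial step.

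You take a maximal train track $\tau$ \emph{carrying} $\lambda$, and you claim that for a multicurve $\mu$ carried by $\tau$ with weight vector $n$ one has $i(\lambda,\mu) = \sum_e n_e \lambda_e$, where $\lambda_e$ are the transverse weights of $\lambda$ on the branches of $\tau$. That formula is false. Sanity check: if it held, take a sequence of weighted multicurves $\mu_k$ carried by $\tau$ whose weight vectors converge to $(\lambda_e)_e$; then the formula gives $i(\lambda,\mu_k) \to \sum_e \lambda_e^2 > 0$, while continuity of $i$ gives $i(\lambda,\mu_k) \to i(\lambda,\lambda) = 0$. The intersection pairing between two laminations carried by the \emph{same} train track is only piecewise bilinear, not bilinear, so the kernel-of-a-linear-form argument on the lattice $W \cap \Z^E$ does not apply as stated.

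The natural repair is to take instead a maximal train track $\tau$ that $\lambda$ \emph{hits efficiently} (for instance one dual to a track carrying $\lambda$). Writing $\lambda(e)$ for the $\lambda$-transverse measure of the branch $e$ of such a $\tau$, one genuinely has $i(\lambda,\mu) = \sum_e n_e\,\lambda(e)$ for every multicurve $\mu$ carried by $\tau$ with integer weights $n$, since carrying preserves efficient position. With that substitution your linear-algebra step goes through: the lattice $W \cap \Z^E$ has rank $6g-6$, the length form has image of $\Q$-rank $< 6g-6$ under the contraposition hypothesis, hence a nonzero integer kernel vector $n$, and the weight vectors $Kw^0$ and $Kw^0+n$ yield two distinct multicurves of equal $\lambda$-length. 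This would give a self-contained proof of $(1)\Rightarrow(3)$ that bypasses the paper's Bonahon/Thurston apparatus; do be careful, though, to justify that a suitable maximal $\tau$ in efficient position exists and that carrying preserves efficiency, since those two facts are doing all the work in the corrected argument.
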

The first property implies that $v$ defines a total order on the set of multicurves, so the $\max$ in Equation \ref{max} will always be strict, which is why they deserve to be called strict valuations.
They played a prominent role in our previous article, where we showed that almost all valuations are strict in the measure theoretical sense.
They will be equally important in this paper, as the second property enables to define the residual value at $v$ of a function $f\in \C(X)$ satisfying $v(f)\le 0$. 
Combined with the last property, it shows that strict valuations are Abhyankar in the sense that his inequality is an equality: we wonder whether any measured lamination gives rise to an Abhyankar valuation.

We have not come across strict valuations in the literature. Instead we encounter maximal measured lamination, those whose support cannot be enlarged. In this article, we characterize the valuations associated to maximal laminations as being \emph{acute}: for any $\alpha,\beta\in \pi_1(S)\setminus\{1\}$ we never have $v(t_\alpha t_\beta)=v(t_{\alpha\beta})=v(t_{\alpha\beta^{-1}})$ so that these quantities are the lengths for the edges of an acute isosceles triangle. 
We will show that a valuation is acute if and only if any time we smooth a self-intersection of a multiloop which is taut (minimally intersecting in its homotopy class), the two resulting multiloops have distinct $\lambda$-lengths. This property plays a crucial role in the proof of the unitarity theorem. We also show that any strict valuation is acute and wonder if the reciprocal statement is true.

\subsection*{Tangent spaces and Thurston's symplectic structure}

The space of measured laminations is a PL manifold but does not carry any sensible smooth structure (for which intersection numbers have smooth variations), so there is no symplectic structure in the usual sense. However, Thurston showed that most points (maximal laminations) have a well-defined tangent space endowed with a non-degenerate skew-symmetric form. 

In this article we propose a straightforward notion for the tangent space $T_v\ML$ at a valuation, and show that when $v$ is strict, it coincides with the space $\Hom(\Lambda_v,\R)$ which has dimension $\ratrk(v)=6g-6$. 
Then we show how the Goldman Poisson bracket induces a ``residual Poisson bracket" at any strict valuation $v$, thus endowing $T_v\ML$ with a symplectic structure. For future reference we shall name this model after Goldman.
This uses the crucial fact that, given $f,g\in \C[X]$, the Newton polytope $\NewSet(\{f,g\})$ is included in $\NewSet(fg)$ as we already noticed in the second theorem. This property amounts to the inverse inclusion of the dual polytopes $\NewSet^*(\{f,g\}) \supset \NewSet^*(fg)$, which can be written simply as $v(\{f,g\})\le v(fg)$ for all $v\in \ML$.

Finally, we provide precise identifications between this symplectic vector space and two other existing models in the literature, which we now pass under review.

In Morgan-Shalen work, the key notion allowing to relate measured laminations to valuations is the action of $\pi_1(S)$ on real trees. We like to think of this dynamical point of view as lying in between the two others as in the following schematic table.
\begin{center}
\vspace{5pt}
\begin{tabular}{|c|c|c|}
\hline
Topological/Geometrical & Dynamical & Functional \\
\hline
measured foliation & action of $\pi_1(S)$& valuation\\
measured geodesic laminations & on a real tree  & intersection function\\
\hline
\end{tabular}
\vspace{5pt}
\end{center}

For future reference, we name the symplectic vector spaces appearing naturally from each of those approaches after Thurston, Bonahon and Goldman respectively.

\textit{Goldmans's model:} is given by the residual Poisson bracket on $T_v\ML$ which we introduced briefly, it will be described with more detail in the body of the paper.

\textit{Thurston's model:} viewing $\lambda$ as a maximal measured lamination, one can associate a ramified 2-fold covering $S'\to S$ known as the orientation cover of the lamination. The group $H^1(S',\R)$ splits into a symmetric and antisymmetric part with respect to involution of the covering $S'\to S$. The space $H^1(S',\R)^-$ with the cup-product form is the geometrical model for $T_\lambda \ML$. 

\textit{Bonahon's model:} if we consider a trivalent real tree $T$ with a free and minimal action of $\pi_1(S)$, we can consider the space of functions $c\colon V(T)^2\to \R$ on the set of pairs of trivalent vertices of $T$ which satisfy:
\begin{enumerate}
    \item $c(x,y)=c(y,x)$.
    \item $c(x,y)=c(x,z)+c(z,y)$ if $z$ belongs to the geodesic joining $x$ to $y$.
    \item $c(\alpha x,\alpha y)=c(x,y)$ for all $\alpha\in \pi_1(S)$.
\end{enumerate}
Again, this space has a natural antisymmetric form related to the cyclic orientation of $T$ at every trivalent vertex. It is a variation on the notion of transverse cocycles introduced by Bonahon (see \cite{Bonahon_1996}). 
The identification between Thurston's and Bonahon's model is well-known but all proofs we encountered use auxiliary structures like train tracks. At the end of the article, we provide ``invariant" proofs for the following result. 

\begin{theoremintro}[Symplectomorphisms]
There are natural isomorphisms of symplectic vector spaces between the models of Thurston, Bonahon and Goldman.
\end{theoremintro}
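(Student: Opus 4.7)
The plan is to build Bonahon's model as a natural bridge between the geometric (Thurston) and valuation-theoretic (Goldman) descriptions. Throughout, the strict valuation $v$ corresponds to a maximal measured lamination $\lambda$, with associated dual $\R$-tree $T$ carrying a free minimal $\pi_1(S)$-action, and orientation double cover $\sigma\colon S'\to S$ on which the lifted lamination $\widetilde\lambda$ becomes transversely orientable.

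First I would set up the isomorphism Bonahon $\cong$ Thurston. Given $c\colon V(T)^2\to \R$ satisfying the three axioms, I would define a transverse cocycle $\beta_c$ on $\lambda$ by the formula $\beta_c(\alpha)=c(x_0,x_1)$ whenever $\alpha$ is a transverse arc joining complementary regions corresponding to vertices $x_0,x_1\in V(T)$; axiom (2) ensures compatibility under concatenation of arcs. On the cover $S'$ the lamination carries a coherent transverse orientation, so $\beta_c$ integrates to a de~Rham class $\omega_c\in H^1(S',\R)$, and axiom (1) translates into antisymmetry under the deck involution so $\omega_c\in H^1(S',\R)^-$. Injectivity is immediate since $c$ is recovered by integrating $\omega_c$ along paths between lifts of complementary regions, and since both spaces have dimension $6g-6$ this is an isomorphism.

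Next I would construct the map Bonahon $\to$ Goldman. An element of $T_v\ML = \Hom(\Lambda_v,\R)$ is determined by its values on the lengths $i(\lambda,\gamma)$ for $\gamma\in\pi_1(S)$, and for such $\gamma$ one has $i(\lambda,\gamma)=\ell_T(\gamma)$, the translation length along the axis $A_\gamma\subset T$. Given $c$ I would set $X_c\bigl(\ell_T(\gamma)\bigr) = c(x, \gamma x)$ for $x$ any vertex of $T$ on $A_\gamma$; the axioms (2) and (3) guarantee independence from $x$ and $\Q$-linearity in $\ell_T(\gamma)$, and the construction extends to a well-defined derivation on the whole value group $\Lambda_v$ since strictness ensures that the lengths of multicurves generate $\Lambda_v$ freely over $\Q$. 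The inverse map is the formal derivative of the tree distance: a tangent vector $X\in \Hom(\Lambda_v,\R)$ gives a cocycle $c_X(x,y)=X(d_T(x,y))$, additivity and equivariance being built in.

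The main obstacle is the symplectic compatibility. For Thurston $=$ Bonahon, the cup product on $H^1(S',\R)^-$ should translate, under $\omega_c\mapsto c$, into Bonahon's cyclic form $\omega(c_1,c_2)=\sum_{v}\bigl(c_1(v,v_1)\,c_2(v,v_2)+\text{cyclic}\bigr)$ at trivalent vertices of $T/\pi_1(S)$; I would verify this by localizing the cup product around the complementary ideal triangles of $\widetilde\lambda\subset \widetilde S$, arguing invariantly via Poincar\'e duality on the pair $(S',\widetilde\lambda)$ rather than via a train track carrying $\lambda$. For Bonahon $=$ Goldman, Theorem B already reduces the problem: it suffices to check that the extremal coefficients $\{i_\mu,i_\nu\}(\lambda)$ appearing there coincide, at each intersection $p\in\mu\cap\nu$, with the local cyclic pairing of the associated Bonahon cocycles $c_\mu, c_\nu$. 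The signs in Goldman's formula \eqref{goldman}, which record the orientation of intersections between multiloops, must align with the cyclic order of leaves of $\widetilde\lambda$ around trivalent vertices of $T$; this alignment, carried out coherently for all $\mu$ and $\nu$, is the technical point I expect to demand the most care.
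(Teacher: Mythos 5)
Your Bonahon $\leftrightarrow$ Goldman isomorphism at the level of vector spaces is essentially the map the paper uses ($X_c(\ell_T(\gamma)) = c(x,\gamma x)$, equivalently $c_\phi(x,y)=\tfrac12\phi(d(x,y))$), and the idea that the Bonahon model mediates between the geometric and valuation-theoretic pictures is exactly the paper's strategy. But there are two genuine gaps.

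First and most seriously, invoking Theorem~B to handle the Goldman $\leftrightarrow$ Bonahon symplectic compatibility is circular. Theorem~B equates the extremal coefficients of $\{t_\mu,t_\nu\}$ with the Thurston bracket $\{i_\mu,i_\nu\}(\lambda)$; that equality is precisely (part of) the content of Theorem~C, and in the paper it is a \emph{consequence} of the symplectomorphism chain, not an input to it. What you need instead is a direct computation of the residual bracket $\{t_\alpha,t_\beta\}_v$. The paper does this by using Goldman's formula together with the $\pi_1(S)$-equivariant, cyclic-order-preserving partial boundary map $\partial\tilde S \dashrightarrow \partial T$, which puts the intersection points $p\in\alpha\cap\beta$ in bijection with double cosets $g\in\langle\alpha\rangle\backslash\pi/\langle\beta\rangle$ for which the axes $A_\alpha$ and $gA_\beta$ cross in the tree; applying Proposition~\ref{Prop_Paulin} at each such crossing yields $\{t_\alpha,t_\beta\}_v=\sum_g \epsilon(A_\alpha,gA_\beta)$, which is then matched term by term against the averaged Bonahon pairing $(x,\alpha x)\cdot_\pi(y,\beta y)$. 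Your sketch omits this bridge entirely.

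Second, the Thurston $\leftrightarrow$ Bonahon step leans on the claim ``both spaces have dimension $6g-6$,'' which is not available a priori: on the valuation side only $\dim T_v\ML = \ratrk(v) \le 6g-6$ is known at that point, and the dimension of $\Bb(T)_\pi$ is one of the things to be proved. The paper establishes $\Bb(T)_\pi \cong H_1(S^o,\R^-)$ (hence dimension $6g-6$) by building the ``orbifold tree'' CW-complex $T^o$, showing $\Sigma^o = T^o/\pi$ is homotopy equivalent to $S^o$ (Lemma~\ref{equiv-homotopy}), computing $H_1(T^o,\R^-)\cong \Bb(T)$ (Lemma~\ref{homology-arbre}), and running the Cartan--Leray spectral sequence. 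Your alternative route — defining a transverse cocycle $\beta_c$ and ``integrating to a de~Rham class'' — is the classical Bonahon approach, and the integration step is exactly where train tracks are normally used; your stated intent to ``argue invariantly via Poincaré duality'' gestures at the right idea but does not supply the machinery that makes it precise, which in the paper is the $T^o$ construction itself.
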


%Of course, the identification between the models of Thurston and Bonahon are not due to us (\cite{Bonahon_1996})); moreover the similarity between these and the Atiyah-Bott-Goldman-Weil-Peterson symplectic structure has been noticed by many authors \cite{Papa-Penner_symplectique-bord_1991,Sozen-Bonahon_2001}). 
In particular we provide a new construction of independent interest reminiscent of Milnor's join construction, which starting from a trivalent real tree, gives a space homotopically equivalent to the covering $S'$.
We may wonder which of these three symplectic identifications persist for more general actions of Fuchsian groups on real trees.

\renewcommand{\contentsname}{Plan of the paper}
\setcounter{tocdepth}{1}
\tableofcontents

\subsubsection*{Acknowledgements}
We wish to thank Chris Leininger and Maxime Wolff for useful discussions around this project, as well as Patrick Popescu-Pampu for reading the introduction.

\section{Background}

\subsection{Algebra of functions on the character variety}

Let $S$ be a closed connected and oriented surface of genus $g\ge 1$. We denote by $X$ the character variety of $S$, which is the algebraic quotient of its representation variety $\Hom(\pi_1(S),\SL_2(\C))$ by the conjugacy action of $\SL_2(\C)$, defined as the spectrum of its algebra of functions:
$$\C[X]=\C[\Hom(\pi_1(S),\SL_2(\C))]^{\SL_2(\C)}.$$

A celebrated result of Procesi gives generators and relations for this algebra (which holds for any finitely generated group).
For $\alpha\in \pi_1(S)$, we note $t_\alpha\in \C[X]$ the trace function given by $t_\alpha([\rho])=\tr \rho(\alpha)$. 

\begin{theorem}[Procesi]
The algebra $\C[X]$ is generated by the $t_\alpha$ for $\alpha\in \pi_1(S)$. The ideal of relations is generated by $t_1-2$ and $t_\alpha t_\beta-t_{\alpha\beta}-t_{\alpha\beta^{-1}}$ for all $\alpha,\beta\in \pi_1(S)$. 
\end{theorem}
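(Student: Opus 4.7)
The plan is to establish the two parts of the theorem in sequence: generation of $\C[X]$ by trace functions, then completeness of the displayed relations.

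For generation I would invoke the first fundamental theorem of invariant theory for $\SL_2(\C)$. Fix a finite generating set $\gamma_1,\ldots,\gamma_n$ of $\pi_1(S)$; then $\Hom(\pi_1(S),\SL_2(\C))$ embeds as a closed $\SL_2(\C)$-invariant subvariety of $\SL_2(\C)^n$. Classical results of Sibirskii and Procesi show that $\C[\SL_2(\C)^n]^{\SL_2(\C)}$ is generated by traces of words in the coordinate matrices $A_1,\ldots,A_n$. Because $\SL_2(\C)$ is reductive, restriction of invariants to a closed invariant subvariety is surjective, so $\C[X]$ is generated by the $t_\alpha$.

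That the displayed relations hold in $\C[X]$ is immediate from the Cayley--Hamilton identity: for $A\in \SL_2(\C)$ one has $A+A^{-1}=\tr(A)\,I$, and multiplying on the right by $B$ and taking traces yields the Fricke--Vogt identity $\tr(AB)+\tr(AB^{-1})=\tr(A)\tr(B)$; combined with $\tr(I)=2$ this gives both families of relations.

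The substantive point is completeness. Let $\mathcal{R}$ denote the polynomial algebra on indeterminates $T_\alpha$ indexed by $\pi_1(S)$ modulo the stated ideal, and let $\pi\colon \mathcal{R}\to\C[X]$ send $T_\alpha$ to $t_\alpha$; we must show $\pi$ is injective. Using the Fricke--Vogt relation to resolve intersections and self-intersections one at a time, I would argue by induction on the total self-intersection number of a multiloop representing $(\alpha_1,\ldots,\alpha_n)$ that any product $T_{\alpha_1}\cdots T_{\alpha_n}$ can be rewritten as a $\Z$-linear combination of $T_\mu$ with $\mu\in\MC$; thus $\mathcal{R}$ is $\C$-linearly spanned by $\{T_\mu\}_{\mu\in\MC}$. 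The main obstacle is then to establish $\C$-linear independence of the family $\{t_\mu\}_{\mu\in\MC}$ in $\C[X]$, so the spanning set becomes a basis and $\pi$ is forced to be an isomorphism. One way to secure this independence is to evaluate along degenerating families of Fuchsian representations: $t_\mu$ grows exponentially at rate $\tfrac{1}{2}\sum_i \ell_\rho(\mu_i)$, and because distinct multicurves have generically distinct length spectra (indeed, Fuchsian length functions separate multicurves, as in Thurston's compactification of Teichm\"uller space), any nontrivial finite linear relation $\sum c_\mu t_\mu=0$ is defeated by picking a ray along which the dominant multicurve's coefficient must vanish, then iterating.
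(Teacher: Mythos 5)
The paper itself does not prove this theorem; it is cited as a known result (Procesi for generation, Przytycki--Sikora \cite{PrSi_sl2-skein_2000} for the multicurve basis, which is the hard part of what you are proving). So there is no internal proof to compare against. Your outline follows the standard route and is sound in its overall structure: invariant theory for generation, Cayley--Hamilton for the displayed relations, and spanning plus linear independence for completeness. The logic at the end is correct: if $\pi\colon\mathcal R\to\C[X]$ is surjective, $\{T_\mu\}_{\mu\in\MC}$ spans $\mathcal R$, and $\{t_\mu\}$ is linearly independent in $\C[X]$, then $\pi$ is an isomorphism.

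Two places deserve more care. First, the spanning claim for $\mathcal R$: you must check that the reduction by one self-intersection is well defined inside $\mathcal R$ (i.e.\ that the rewriting $T_{\gamma\delta}=T_\gamma T_\delta-T_{\gamma\delta^{-1}}$ genuinely decreases a suitable complexity and terminates), and that the endpoint of the induction is a monomial $T_\mu$ with $\mu$ a multicurve, not just a loop with zero self-intersection that might still be nullhomotopic --- in the nullhomotopic case one must also invoke $T_1-2$ to discard it.

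Second and more substantively, the linear independence argument is where the real content lies, and your sketch compresses it. "Picking a ray along which the dominant multicurve's coefficient must vanish" needs to be made precise: for a finite set $F\subset\MC$ you must produce a degenerating family of Fuchsian representations along which exactly one $\mu_0\in F$ has strictly maximal growth of $|t_{\mu_0}|$, uniformly. This requires (a) the existence of a projective measured lamination $[\lambda]$ with $i(\lambda,\mu)$ pairwise distinct for $\mu\in F$ --- in the paper's language, a strict valuation --- and (b) the asymptotic $\log|t_\mu(\rho_t)|\sim \tfrac12\ell_{\rho_t}(\mu)$ along the chosen ray with uniform control of lower-order terms, which is where the Morgan--Shalen valuation machinery (or a direct quasi-Fuchsian holonomy estimate) is actually used. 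Also note that rescaling a fixed hyperbolic metric by a constant does not stay in Teichm\"uller space, so the ray must be interpreted in the Thurston or Morgan--Shalen compactification, not by naive scaling. The iteration is fine once (a) and (b) are secured, since each step strictly reduces the support. An alternative, more combinatorial, route is the one in Przytycki--Sikora which the paper cites; it sidesteps the asymptotics entirely.
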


\begin{definition}
A \emph{multiloop} on $S$ is a class of continuous maps $f\colon \Gamma \to S$ from compact $1$-dimensional manifolds $\Gamma$ to $S$ which is not homotopic to a constant on any component. We consider it modulo the relation declaring $f$ equivalent to $f'\colon \Gamma'\to S$ when there is a homeomorphism $\varphi:\Gamma\to \Gamma'$ such that $f'\circ \phi$ is homotopic to $f$. 

A \emph{multicurve} is a multiloop which is represented by an embedding, we denote $\MC$ the set of multicurves.
\end{definition}

% By considering the homotopy class of the restriction of $f$ to each component of $\Gamma$, we see that a multiloop is the same as a finite collection of non-trivial conjugacy classes in $\pi_1(S)$, considered up to inversion. For a multiloop $\alpha$ represented by $\alpha_1,\ldots,\alpha_n\in \pi_1(S)$, we set $s_\alpha=(-1)^n\prod_{i=1}^n t_{\alpha_i}$.
%
A multiloop amounts to a finite multiset $\{\alpha_1,\ldots,\alpha_n\}$ of non-trivial conjugacy classes in $\pi_1(S)$ considered up to inversion: we define $t_\alpha=\prod_{i=1}^n t_{\alpha_i}$, in particular $t_\emptyset = 1$.
%
% The geometric intersection and self intersection numbers are well defined for multiloops. 
%
The components of a multicurve must be non contractible, simple and pairwise disjoint. %, and we allow the empty multicurve for which $s_\emptyset = 1$.

% We call \emph{multicurve} on $S$ the isotopy class of an embedded one dimensional submanifold $\mu \subset S$ which is a union of curves homotopic to $\mu_i \in\pi_1(\Sigma)\setminus \{1\}$; that is to say the components of $\mu$ must be non contractible, simple and disjoint, but we allow the empty multicurve. We denote by $\MC$ the set of isotopy classes of multicurves on $S$. We set $t_\mu=\prod t_{\mu_i}\in \C[X(\Sigma)]$, in particular $t_\emptyset=1$.
%

%The trace relation $t_\alpha t_\beta-t_{\alpha\beta}-t_{\alpha\beta^{-1}}$ becomes $s_{\alpha \cup\beta}+s_{\alpha\beta}+s_{\alpha\beta^{-1}}=0$, and 
Applying the trace relation recursively to reduce the number of self intersections in multiloops, one may deduce part of the following theorem \cite{PrSi_sl2-skein_2000}. The linear independance requires more work.

% The previous theorem has the following important consequence \cite{PrSi_sl2-skein_2000}.

\begin{theorem}
The family $(t_\mu)_{\mu\in \MC}$ forms a linear basis of the algebra $\C[X]$.
\end{theorem}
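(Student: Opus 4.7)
The plan is to prove the two assertions---spanning and linear independence---separately, following the dichotomy alluded to in the paragraph preceding the theorem.

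\textbf{Spanning.} By Procesi's theorem, $\C[X]$ is generated as a $\C$-algebra by the trace functions $t_\alpha$ for $\alpha \in \pi_1(S)$, so it suffices to check that every product $t_{\alpha_1}\cdots t_{\alpha_n}$ lies in the $\C$-span of the $t_\mu$ for $\mu \in \MC$. I would argue by induction on the minimal number $k(\alpha_1,\ldots,\alpha_n)$ of double points in a generic representative of the underlying multiloop. If $k=0$ the family is already a multicurve. Otherwise, pick a double point $p$ between two (possibly equal) components, base-point the corresponding loops at $p$ as elements $\alpha,\beta \in \pi_1(S,p)$, and apply the Procesi identity $t_\alpha t_\beta = t_{\alpha\beta} + t_{\alpha\beta^{-1}}$ to rewrite the product. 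A classical bigon argument shows that both resolutions $\alpha\beta,\alpha\beta^{-1}$, completed by the other components, admit representatives with strictly fewer double points, so the induction closes.

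\textbf{Linear independence.} Assume a nontrivial finite relation $\sum_{\mu\in F} m_\mu t_\mu = 0$ with $F\subset\MC$. I would derive a contradiction by producing enough representations to separate the $t_\mu$ for $\mu\in F$. Fix a pants decomposition $\mathcal{P}=\{c_1,\ldots,c_{3g-3}\}$ that meets every $\mu\in F$ transversally, and consider the restriction of $\C[X]$ to a $(6g-6)$-parameter family of Fuchsian representations parametrised by Fenchel--Nielsen length and twist coordinates $(\ell_j,\tau_j)$ along $\mathcal{P}$. For large $\ell_j$ one has an expansion $t_\mu = \sum c_\mu(\tau) \, e^{\tfrac{1}{2}\sum_j n_{\mu,j}\ell_j} + \text{lower}$, where $n_{\mu,j}\in\Z_{\ge 0}$ is controlled by the intersection number $i(\mu,c_j)$. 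Distinct multicurves produce distinct leading exponents, or, when the exponent vectors coincide, distinct trigonometric polynomials in the $\tau_j$ coming from the different combinatorics of the lifts; this forces $m_\mu=0$ for all $\mu\in F$.

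The main obstacle is this last step: the spanning direction is a mechanical induction, but linear independence relies on having a supply of representations rich enough to distinguish distinct multicurves, which in turn rests on the character variety genuinely having the expected dimension $6g-6$ and on the fine behaviour of trace functions near the Thurston boundary. An alternative route, which avoids hyperbolic geometry entirely, is to invoke the isomorphism between $\C[X]$ and the Kauffman bracket skein algebra at $A=-1$ and cite the result of \cite{PrSi_sl2-skein_2000} that multicurves form a free basis of the latter; the trace basis of $\C[X]$ is then simply the image of the skein basis.
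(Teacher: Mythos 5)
Your fallback route---deduce the theorem from the isomorphism $K(S,\C)|_{A=-1}\simeq\C[X]$ together with the freeness of the multicurve basis of the skein module, citing \cite{PrSi_sl2-skein_2000}---is precisely what the paper does (see its Theorem on Przytycki/Turaev), and your spanning argument is the same inductive reduction of crossings via the trace relation that the paper sketches in one sentence. Your attempted self-contained proof of linear independence via Fenchel--Nielsen asymptotics is a genuinely different idea, and you are right to flag it as the sticking point: the gap is real and actually twofold. First, distinct multicurves routinely share the same intersection vector $(i(\mu,c_j))_j$ (Dehn twisting along any $c_j$ leaves it unchanged), so the leading exponential in the length parameters does not separate them and one must recover the Dehn--Thurston twist data from the $\tau_j$-dependence of the prefactors, which is a delicate asymptotic computation rather than a soft observation. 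Second, concluding $m_\mu=0$ in $\C[X]$ from vanishing along the Fuchsian locus requires knowing that this real $(6g-6)$-dimensional locus is Zariski dense in $X$, i.e.\ that $X$ is irreducible of the expected dimension, which is itself a nontrivial input. Both obstacles can be overcome, but the argument ends up heavier than the skein-theoretic citation, which is why the paper takes that route and why your instinct to fall back on it is the right one.
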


%Moreover, Atiyah-Bott and Goldman showed that $X$ is endowed with a natural symplectic structure. Since $X$ is a singular affine variety, this structure is better described in terms of the Poisson bracket on its algebra of functions.

%\begin{theorem}[Goldman]\label{goldman}
%The Poisson bracket on $\C[X]$ 
%$\{\cdot,\cdot\}:\C[X]^2\to \C[X]$ 
%satisfies for all $\alpha,\beta\in \pi_1(S)$:
%$$\{t_\alpha,t_\beta\}=\sum_{p\in \alpha\cap \beta} \epsilon_p(t_{\alpha_p\beta_p}-t_{\alpha_p{\beta_p}^{-1}})$$
%where the sum ranges over all intersection points $p$ between transverse representatives for $\alpha \cup \beta$ and $\epsilon_p$ is the sign of such an intersection, while $\alpha_p, \beta_p$ denote the homotopy classes of $\alpha, \beta$ based at $p$.
%\end{theorem}

\subsection{Deriving the Poisson algebra from the Kauffman algebra}

The multiplication and the Poisson bracket on $\C[X]$ appear naturally as by-products of the Kauffman algebra $K(S,R)$ over some ring $R$ containing an invertible element $A$. Recall that a banded link in an oriented 3-manifold is an oriented submanifold diffeomorphic to a finite union of annuli.

As an $R$-module, the Kauffman algebra is the quotient of the free module over isotopy classes of banded links $L$ in $S\times[0,1]$, by the sub-module generated by Kauffman's local skein relations $[\bigcirc \cup L]=(-A^2-A^{-2})[L]$ and $[L_\times]=A[L_+]+A^{-1}[L_-]$ where $L_\times,L_+,L_-$ are banded links differing in a ball as shown in Figure \ref{fig:kauffman}.

\begin{figure}[htbp]
    \centering
    \def\svgwidth{6cm}
    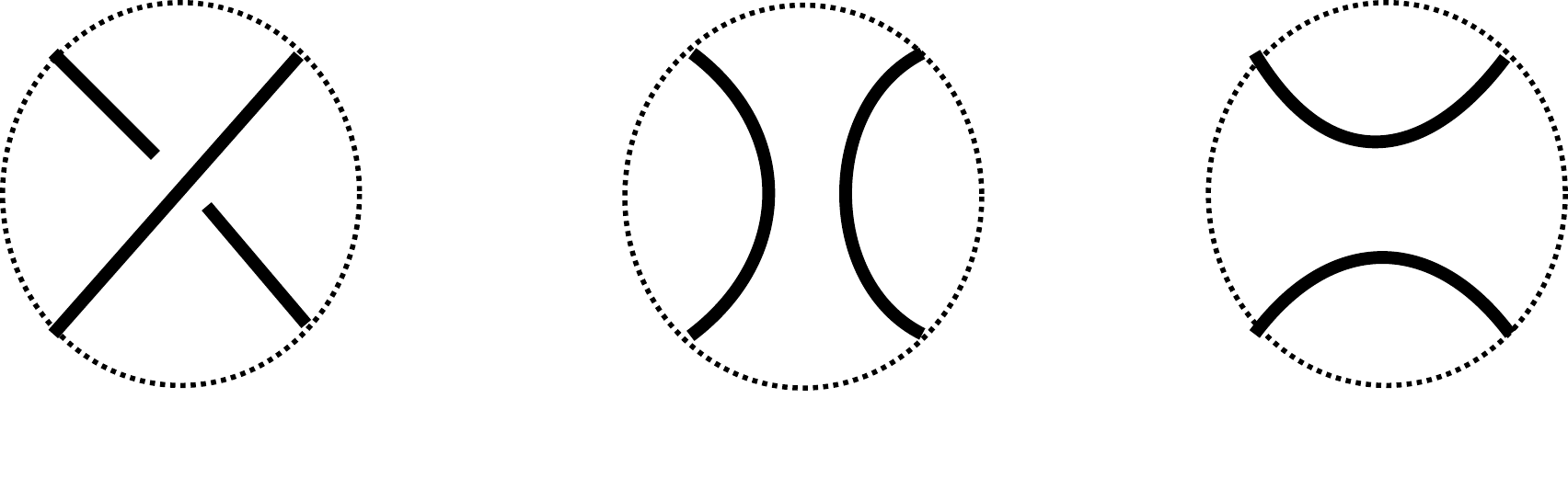
    \caption{Local skein relation.}
    \label{fig:kauffman}
\end{figure}

The product is given by stacking two banded links one above the other. Precisely, $[L_0][L_1]=[\Phi_0(L_0)\cup\Phi_1(L_1)]$ for maps $\Phi_i(x,t)=(x,(t+i)/2)$ of $S\times [0,1]$ into itself. 

Any multicurve $\mu$ on $S$ can be seen as a banded link $[\mu]$ in $S\times[0,1]$ by considering a tubular neighborhood $S\times\{1/2\}$ often called its blackboard framing.

\begin{theorem}[Przytycki/Turaev]\label{skein} Using the previous notations:
\begin{enumerate}
\item The module $K(S,R)$ is a free $R$-module generated by multicurves.
\item The algebra $K(S,\C)$ with $A=-1$ is commutative, and the map sending the blackboard framing $[\mu]$ to $(-1)^{|\mu|}t_\mu$ defines an isomorphism $K(S,\C)\to \C[X]$ where $|\mu|$ denotes the number of components of $\mu$. 
\item The map sending a multicurve to its blackboard framing yields an isomorphism of $\C[A^{\pm 1}]$-modules $K(S,\C)\otimes \C[A^{\pm 1}] \simeq K(S,\C[A^{\pm 1}])$.
%Decomposing in the basis of multicurves yields an isomorphism of $\C[A^{\pm 1}]$-modules $K(S,\C[A^{\pm 1}])\simeq K(S,\C)\otimes \C[A^{\pm 1}]$.
In this setting, we have: $$\{f,g\}=\frac{1}{2}\frac{d}{dA}\Big[ fg-gf\Big]_{A=-1}.$$
\end{enumerate}
\end{theorem}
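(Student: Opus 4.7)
The plan is to handle the three assertions sequentially, bootstrapping linear independence in part (1) from the identification with $\C[X]$ in part (2).

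For the spanning statement in part (1), I would take any banded link $L \subset S \times [0,1]$ and project it generically to $S$, producing a 4-valent diagram with only transverse double points. At each crossing I would apply the Kauffman skein relation $[L_\times]=A[L_+]+A^{-1}[L_-]$ to trade the crossing for a linear combination of two smoothed diagrams, and then remove every null-homotopic component using $[\bigcirc \cup L]=(-A^2-A^{-2})[L]$. Since the process strictly decreases the number of crossings and then of null-homotopic components, it terminates at an $R$-linear combination of multicurves, showing that the family $([\mu])_{\mu\in\MC}$ spans $K(S,R)$.

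For part (2), I would first observe that setting $A=-1$ turns the skein relations into $[L_\times]=-[L_+]-[L_-]$ and $[\bigcirc \cup L]=-2[L]$, which after inserting the sign $(-1)^{|\mu|}$ match exactly Procesi's defining relations $t_\alpha t_\beta=t_{\alpha\beta}+t_{\alpha\beta^{-1}}$ and $t_1=2$. Consequently, the rule $[\mu]\mapsto (-1)^{|\mu|} t_\mu$ extends to a well-defined algebra morphism $\Phi\colon K(S,\C)\to\C[X]$, which is surjective onto the Procesi generators. Combining with the previously stated basis theorem for $(t_\mu)_{\mu\in\MC}$ and the spanning statement from (1), the morphism $\Phi$ is a bijection; this simultaneously yields linear independence of the $[\mu]$ over $\C$ and the algebra isomorphism $K(S,\C)\simeq\C[X]$. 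Commutativity at $A=-1$ is immediate from the symmetry $A=A^{-1}$: the skein relation no longer distinguishes $L_+$ from $L_-$, so the stackings $L_0\cdot L_1$ and $L_1\cdot L_0$ agree.

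For part (3), I would extend the reduction procedure from part (1) to $R=\C[A^{\pm 1}]$ and verify its confluence by a Reidemeister-type check that any two orderings of the local moves produce the same normal form; this shows that the $\C[A^{\pm 1}]$-module $K(S,\C[A^{\pm 1}])$ is free on multicurves, matching $K(S,\C)\otimes\C[A^{\pm 1}]$. Since commutativity holds at $A=-1$, the element $fg-gf \in K(S,\C[A^{\pm 1}])$ vanishes when $A=-1$, so $\frac{1}{2}\frac{d}{dA}(fg-gf)$ has a well-defined evaluation there and descends to a bilinear operation on $\C[X]$. To recognise this bilinear operation as Goldman's Poisson bracket, I would compute it on $t_\alpha, t_\beta$ for transverse loops by resolving each crossing via the skein relation, extracting the $A$-derivative at $A=-1$, and comparing with \eqref{goldman}: the antisymmetry between $A$ and $A^{-1}$ precisely produces the signs $\epsilon_p$ together with the difference $t_{\alpha_p\beta_p}-t_{\alpha_p\beta_p^{-1}}$ at each intersection $p$.

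The main obstacle is the linear independence of multicurves in the bare skein module $K(S,R)$: over $\C$ it comes for free from the identification with $\C[X]$, but for a general ring $R$ it requires the confluence check in part (3), which is essentially the Reidemeister-type verification that the normal form does not depend on the choice of diagram, of crossing order, or of resolution sequence.
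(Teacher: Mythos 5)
The paper does not prove this theorem: it is quoted as a known result and attributed to Przytycki and Turaev (with the neighbouring reference \cite{PrSi_sl2-skein_2000} being the relevant source). The paragraph that follows the statement in the paper only unpacks the resolution formula \eqref{smoothing}; it is exposition, not a proof. So there is no internal argument to compare your write-up against, and you should be judged as reconstructing a citation.

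On its own merits, your sketch follows the standard route and is broadly sound, but it defers rather than resolves the one genuinely hard point. You correctly prove spanning in part (1), and your bootstrap for $R=\C$ in part (2) is valid: surjectivity of $\Phi$ onto the Procesi generators plus the known linear independence of $(t_\mu)_{\mu\in\MC}$ in $\C[X]$ upgrades a spanning set to a basis. But the statement of part (1) asserts \emph{freeness} over an arbitrary $R$ with invertible $A$, and this is exactly the part you flag but do not establish. Reducing to the universal case $R=\Z[A^{\pm 1}]$ is fine (the skein relations are $\Z[A^{\pm 1}]$-linear so $K(S,R)\simeq K(S,\Z[A^{\pm 1}])\otimes_{\Z[A^{\pm 1}]}R$), yet the promised ``Reidemeister-type check'' is the entire content of Przytycki's theorem: one must show the normal form is independent of the choice of diagram and of resolution order, which amounts to a nontrivial diamond-lemma argument across Reidemeister II and III moves and the trivial-loop relation, or an explicit well-ordering as in the original paper. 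Without that, part (3) also has no foundation, since the module identification $K(S,\C)\otimes\C[A^{\pm 1}]\simeq K(S,\C[A^{\pm 1}])$ depends on freeness on both sides. The Goldman bracket verification you outline at the end is the right computation and poses no issue once freeness is in hand.
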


Let us explain more precisely the first point. Given a diagram $D$ for a banded link $L\subset S\times [0,1]$, we denote by $C$ its set of crossings. For any map $\xi\colon C\to \{\pm 1\}$, set $w_\xi=\sum_{c} \xi(c)$ and consider the diagram $D_\xi$ obtained after smoothing each crossing $c\in C$ according to the sign $\xi(c)$, and removing the $n_\xi$ trivial components which appear in the result. 
We have in $K(S,R)$ the formula:
\begin{equation}\label{smoothing}
[L]=\sum_{\xi:C\to \{\pm 1\}}(-A^2-A^{-2})^{n_\xi} A^{w_\xi} [D_\xi]
\end{equation}
which, after grouping terms corresponding to a same diagram $[D_\xi]$, yields the decomposition of $[L]$ in the basis of multicurves. We use this formula in order to understand the product of two multicurves: intuitively, the product consists in taking the disjoint union and summing over all possible smoothings. 

With the second point, we deduce that the algebra $\C[X]$ has a linear basis indexed by trace functions of multicurves. At $A=-1$, the class of $[L]$ does not change if we change a crossing: we can replace the notion of banded link with the simplest notion of multiloop that we defined previously.

The Kauffman algebra is not completely necessary for our purposes. However, we find it conceptually useful for the following reasons. 
It transforms the trace relation into a local relation whose sign is more convenient (for instance while performing successive diagrammatic computations), and a better understanding of the product in terms of smoothings. It also provides a simple reason to why the Goldman bracket actually satisfies the Jacobi relation: this comes from the third point and the obvious associativity of multiplication the Kauffman algebra.
Finally, in the context of this article, it provides an alternative formula for the Poisson bracket which enlightens Theorem \ref{ThmB}: a smoothing $\xi$ which is extremal for $[\alpha][\beta]$ is also extremal for the Poisson bracket $\{t_\alpha,t_\beta\}$, and its coefficient is $\pm w_\xi$, this integer will be interpreted as a residual Poisson bracket.

\section{Measured laminations and simple valuations}
\subsection{Simple valuations}

It is well-known that a measured lamination $\lambda$ on $S$ is characterized by the length $i(\lambda,\gamma)$ it assigns to any simple curve $\gamma$. 
This ``functional" point of view can be extended to define a map $v_\lambda:\C[X]\to \{-\infty\}\cup [0,+\infty)$
satisfying $v(0)=-\infty$ and for all $f=\sum m_\mu t_\mu$ decomposed in the multicurve basis:
\begin{equation}\label{simple}
v_\lambda(f)=\max\{i(\lambda,\mu) \mid m_\mu\ne 0\}
\end{equation}
where $i(\lambda,\mu)=i(\lambda,\mu_1)+\cdots+i(\lambda,\mu_n)$ for a multicurve $\mu$ with components $\mu_1,\ldots,\mu_n$.
By \cite[Proposition 1.2]{MS_Aut(CV)_2020}, Equation \eqref{simple} is coherent with the fact that for any $\alpha\in \pi_1(S)$, not necessarily simple, we actually have $v_\lambda(t_\alpha)=i(\lambda,\alpha)$. Let us recall \cite[Definition 1.1]{MS_Aut(CV)_2020}.

\begin{definition}\label{defsimple}
A simple valuation on $\C[X]$ is a map $v:\C[X]\to \{-\infty\}\cup \R_{\ge 0}$ satisfying:
\begin{enumerate}
    \item $v(f)=-\infty$ if and only if $f=0$.
    \item $v(fg)=v(f)+v(g)$ for all $f,g\in \C[X]$.
    \item If $f=\sum m_\mu t_\mu$ then $v(f)=\max\{v(t_\mu)\mid m_\mu \ne 0\}.$
\end{enumerate}
\end{definition}
% The last point implies that for any $f,g\in \C[X]$, $v(f+g)\le \max(v(f),v(g))$, with equality if $v(f)\ne v(g)$. 

The following characterization was of fundamental importance in \cite{MS_Aut(CV)_2020}: it yields an homeomorphism between the space of simple valuations and $\ML$, both topologies being defined by simple convergence for the evaluations of multicurves.

\begin{theorem}[M-S] The simple valuations on $\C[X]$ are precisely the $v_\lambda$ for $\lambda\in \ML$.
\end{theorem}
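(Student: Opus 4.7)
The plan is to prove both inclusions separately. The forward direction---that each $v_\lambda$ is a simple valuation---has (1) and (3) of Definition \ref{defsimple} immediate from the defining formula \eqref{simple}, so the real content is multiplicativity (2). I would reduce to showing the following claim for multicurves $\mu, \nu$: when the product is expanded in the multicurve basis as $t_\mu t_\nu = \sum_\xi c_\xi t_\xi$, the maximum of $i(\lambda, \xi)$ over $\Supp(t_\mu t_\nu)$ equals $i(\lambda, \mu) + i(\lambda, \nu)$ and is attained by a unique multicurve. The natural tool is the Kauffman skein identity \eqref{smoothing} at $A=-1$: choosing transverse representatives of $\mu, \nu$ in minimal position with respect to $\lambda$, each crossing admits two smoothings, and only the unique ``taut'' smoothing $\xi^\star$ that threads through the leaves of $\lambda$ without creating bigons saturates the inequality $i(\lambda, \xi) \le i(\lambda, \mu) + i(\lambda, \nu)$; every other smoothing strictly drops the $\lambda$-length. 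The general multiplicativity then follows by expanding $f = \sum m_\mu t_\mu$ and $g = \sum n_\nu t_\nu$, picking a pair $(\mu_0, \nu_0)$ achieving $v_\lambda(f) + v_\lambda(g)$, and noting that the contribution of $\xi^\star_{\mu_0, \nu_0}$ cannot be cancelled by strictly subleading contributions from other pairs.

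For the reverse direction, given a simple valuation $v$, I would set $\ell(\gamma) = v(t_\gamma)$ for each simple closed curve $\gamma$ and reconstruct a measured lamination $\lambda$ realizing $\ell$ as its intersection function. Multiplicativity combined with trace identities forces sharp constraints on $\ell$: for instance, if simple curves $\alpha, \beta$ meet once with smoothings $\gamma, \delta$, then $t_\alpha t_\beta = t_\gamma + t_\delta$ together with (2) and (3) yields
\[
\ell(\alpha) + \ell(\beta) \;=\; v(t_\alpha t_\beta) \;=\; \max\bigl(v(t_\gamma), v(t_\delta)\bigr) \;=\; \max\bigl(\ell(\gamma), \ell(\delta)\bigr),
\]
which is exactly the characteristic identity satisfied by intersection numbers with a measured lamination. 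The concrete reconstruction of $\lambda$ would proceed via Dehn-Thurston coordinates: fix a pants decomposition $\{c_1, \ldots, c_{3g-3}\}$ together with a system of dual arcs, read off length parameters $m_i = v(t_{c_i})$, and extract twist parameters from $v$ applied to suitable dual curves via the skein identities encoding Dehn twists. One then verifies that $v_\lambda$ and $v$ agree on all trace functions $t_\gamma$ for simple $\gamma$, which by property (3) and linear independence of the $t_\mu$ forces $v_\lambda = v$.

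The main obstacle is the reverse direction, where one must check that the candidate Dehn-Thurston data actually defines a measured lamination and is consistent across all pants decompositions. The delicate points are the degenerations where some $m_i$ vanishes (so twist parameters collapse to a sign), the positivity of twist coordinates (which requires the right choice of dual curve given the valuative data), and the compatibility with the full web of trace identities. A more intrinsic alternative would bypass coordinates entirely by constructing, in the spirit of Morgan-Shalen, a minimal $\pi_1(S)$-equivariant real tree whose translation length function reproduces $\ell$, and then recovering $\lambda$ as the dual lamination of this action; the strict multiplicativity axiom (3) is precisely what selects the archimedian, tree-like valuations among all possible ones.
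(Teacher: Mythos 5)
The paper does not reprove this statement; it is imported from \cite{MS_Aut(CV)_2020}. Evaluating your argument on its own terms, the forward direction has a genuine gap. You rest multiplicativity on two assertions: that a unique ``taut'' smoothing $\xi^\star$ of $\mu\cup\nu$ is the sole multicurve in $\Supp(t_\mu t_\nu)$ saturating $i(\lambda,\xi)\le i(\lambda,\mu)+i(\lambda,\nu)$, and that its leading contribution cannot be cancelled. Both fail for general $\lambda$. Already Proposition~\ref{Luo-smoothings-extremal} shows (taking $\lambda=\mu$) that the two distinct smoothings $L_\mu(\nu)$ and $L_\nu(\mu)$ tie for the maximum; more generally, whenever the top of the $\lambda$-filtration on $\Supp(fg)$ is not a singleton, several pairs $(\mu_i,\nu_i)\in\Supp(f)\times\Supp(g)$ feed complex coefficients of arbitrary phase into those leading multicurves and nothing in the skein expansion excludes their total sum from vanishing. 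You also cannot reach for the unitarity theorem to pin the leading coefficient to $\pm1$, since its proof uses acuteness and density of acute valuations, which in turn already presuppose the identification $\ML\simeq\{\text{simple valuations}\}$ you are trying to prove. The standard repair is to first treat \emph{strict} $\lambda$, where the maximizing multicurve is genuinely unique so your no-cancellation step does go through (with the Smoothing Lemma and acuteness supplied independently by the real-tree arguments of Section~\ref{sectiontrees} rather than by Theorem~\ref{ThmA}), and then extend to all of $\ML$ by density of strict valuations together with continuity of $\lambda\mapsto v_\lambda(fg)-v_\lambda(f)-v_\lambda(g)$. Your write-up is silent on both steps.

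For the reverse direction, the Dehn--Thurston reconstruction is plausible but all the content is concentrated in the ``delicate points'' you list yourself: consistency across pants decompositions, degenerate lengths, and twist-sign ambiguities. These are genuinely painful to verify coordinate by coordinate. The real-tree alternative you relegate to an afterthought is in fact the route that parallels the machinery this paper actually uses: conditions (1)--(3), the trace identities, and Proposition~\ref{Prop_Paulin} force the Culler--Morgan axioms on $\ell(\gamma)=2v(t_\gamma)$, the Bass--Serre tree attached to $\mathcal{O}_v$ realizes this length function, and Skora's theorem then produces the lamination $\lambda$ with $v=v_\lambda$. That route avoids the coordinate bookkeeping entirely, and is what I would expect the cited proof to do.
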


The maximality condition of Definition \ref{defsimple} implies that for any $f,g\in \C[X]$, we have $v(f+g)\le \max(v(f),v(g))$, with equality if $v(f)\ne v(g)$.
Given a multiloop $\alpha$ with a self-intersection $p$, the two smoothings at $p$ give multiloops $\alpha_+$ and $\alpha_-$ and the trace relation reads $t_{\alpha}=\pm t_{\alpha_+}\pm t_{\alpha_-}$.
Hence any valuation $v$ satisfies $v(t_{\alpha})\le \max(v(t_{\alpha_+}),v(t_{\alpha_-}))$.
%Using the isomorphism $K(S,\C)\simeq \C[X]$ of Theorem \ref{skein}, we have such addition relations $[L]=-[L_+]-[L_-]$, so any valuation satisfies $v([L])\le \max(v[L_+],v([L_-])$ with equality if $v([L_+])\ne v([L_-])$. 
%
The following lemma was proven by Dylan Thurston in \cite{Dylan_Thurston_intersection_2009}, and removes the condition $v(t_{\alpha_+})\ne v(t_{\alpha_-})$ for equality to hold. We provide an independent proof in Section \ref{sectiontrees} which relies on the geometry of real trees.

\begin{lemma}[Smoothing Lemma]\label{smoothinglemma}
Let $\alpha$ be a taut multiloop with a self intersection $p$. We denote as usual by $\alpha_+$ and $\alpha_-$ the two smoothings of $\alpha$ at $p$. 
For any $v\in \ML$ we have $v(t_{\alpha})=\max(v(t_{\alpha_+}),v(t_{\alpha_-}))$.
\end{lemma}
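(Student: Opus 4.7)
The inequality $v(t_\alpha) \le \max(v(t_{\alpha_+}), v(t_{\alpha_-}))$ is immediate from the Kauffman trace relation $t_\alpha = \pm t_{\alpha_+} \pm t_{\alpha_-}$ applied at $p$ combined with the ultrametric property of $v$. The substance of the lemma is the opposite inequality, which the plan is to establish via the real tree model of $v$.

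Identify $v = v_\lambda \in \ML$ with the dual real tree $T = T_\lambda$ carrying a minimal isometric action of $\pi_1(S)$, under which $v(t_\gamma) = i(\lambda, \gamma)$ coincides with the translation length $\ell_T(\gamma)$ for every $\gamma \in \pi_1(S)$, extended additively to multiloops. I would first reduce to the case where $\alpha$ is a single loop with a self-intersection at $p$: the components of $\alpha$ disjoint from $p$ contribute equally to both sides of the desired identity and factor out, while the case of a self-intersection between two distinct components of $\alpha$ is handled in an analogous way. Taking $p$ as a basepoint, write $\alpha = \beta\gamma$ with $\beta, \gamma \in \pi_1(S, p)$; the two smoothings then read (up to swapping) $\alpha_+ = \beta \cup \gamma$ and $\alpha_- = \beta\gamma^{-1}$, and the statement reduces to the tree identity
\[
\ell_T(\beta\gamma) \;=\; \max\bigl(\ell_T(\beta) + \ell_T(\gamma),\ \ell_T(\beta\gamma^{-1})\bigr).
\]

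I would proceed by a case analysis on the relative position of the translation axes $A_\beta, A_\gamma \subset T$, invoking the standard tree-geometric formulas for the translation length of a product of hyperbolic isometries. When the axes are disjoint with bridge of length $d$, the formula yields $\ell_T(\beta\gamma) = \ell_T(\beta\gamma^{-1}) = \ell_T(\beta) + \ell_T(\gamma) + 2d$ and the identity holds trivially. When the axes meet, exactly one of $\ell_T(\beta\gamma)$ and $\ell_T(\beta\gamma^{-1})$ equals $\ell_T(\beta) + \ell_T(\gamma)$, while the other is potentially smaller, and the question becomes which of the two realises this larger value.

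The decisive step, and main obstacle of the proof, is to show that tautness of $\alpha = \beta\gamma$ on $S$ forces $\ell_T(\beta\gamma)$ to coincide with the larger value and not with the smaller. Topologically, the tautness of $\alpha$ precludes a backtracking bigon at $p$; lifting to the universal cover of $S$ and transporting via the equivariant collapsing map onto $T$, this should translate into the statement that the lifts of $p$ and of $\beta \cdot p$ along the lifted path of $\alpha$ sit in a configuration aligning $A_\beta$ and $A_\gamma$ compatibly. I would formalise this by showing that any failure of the desired inequality would produce an explicit bigon in $S$ along which $\alpha$ could be isotoped to reduce its self-intersections, contradicting minimality.
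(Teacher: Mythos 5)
Your upper bound via the skein relation and ultrametric inequality matches the paper exactly, and your decision to pass to the dual real tree and classify axis configurations via the translation-length formulas (the paper's Proposition \ref{Prop_Paulin}) is also the strategy the paper uses. Your case analysis has one minor imprecision: when the axes meet at a single point ($D=0$), \emph{both} $\ell_T(\beta\gamma)$ and $\ell_T(\beta\gamma^{-1})$ equal $\ell_T(\beta)+\ell_T(\gamma)$, so ``exactly one'' should be ``at least one.'' More importantly, the reduction you sketch --- reduce to a single loop, declare the two-component case ``analogous'' --- is misleading. The two cases call for excluding \emph{different} axis configurations: for a single loop $\alpha=\beta\gamma$ one must rule out overlapping axes with opposite translation directions, while for two components $\alpha=\beta\cup\gamma$ one must rule out disjoint axes. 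The paper treats these separately, and its most detailed argument is precisely in the two-component case you dismissed.

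The genuine gap is the decisive step, which you correctly flag but leave as a plan. You propose: ``any failure of the desired inequality would produce an explicit bigon in $S$ along which $\alpha$ could be isotoped... contradicting minimality.'' This is not obviously workable. The bad configuration lives in the real tree $T$, and the equivariant map $\tilde{S}\to T$ is a highly non-injective collapsing map; a tree-side statement about axes sharing a segment with opposed translation directions does not translate into a visible bigon on the surface. The paper's actual mechanism is different and sharper: tautness implies (by Dylan Thurston's result) that the lifts of the two strands cross \emph{exactly once} in $\H^2$, so their endpoints are \emph{linked} in $\partial\H^2$; the partial map $\partial\H^2\to\partial T$ is equivariant and preserves the cyclic order, so the endpoints of the axes are linked in $\partial T$ as well, forcing the axes to cross. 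Your proposal replaces this boundary-circle linking argument with a surface-isotopy argument without explaining how the tree-side hypothesis produces the surface-side bigon; as written, the key step is asserted rather than proved.
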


% Notice that in this formula we used implicitly the isomorphism $K(S,\C)\simeq \C[X]$ of Theorem \ref{skein}. As we have $[L]=-[L_+]-[L_-]$, any valuation satisfies $v([L])\le \max(v[L_+],v([L_-])$ with equality if $v([L_+])\ne v([L_-])$.

In fact, the inequality $v(t_{\alpha_+})\ne v(t_{\alpha_-})$ holds under certain conditions on $v$ which are satisfied over subsets of full measure in $\ML$, as we now explain. %show that the trivalence condition precisely ensures this inequality.

\subsection{Acute valuations}

We say that a simple valuation $v=v_\lambda\in \ML$ is \emph{positive} if $v(f)>0$ for all non-constant $f\in \C[X]$. It is equivalent to say that $i(\lambda,\alpha)>0$ for all $\alpha\in \pi_1(S)$, or $i(\lambda,\mu)>0$ for all simple curves $\mu$. Such measured laminations are called filling or aperiodic in the literature.

We now introduce the notion of acute valuation, 
%whose denomination will become clear in section \ref{sectiontrees}, and 
which will happen to be equivalent to the notion of maximal measured geodesic lamination.

\begin{definition}\label{acute}
A simple valuation $v\in \ML$ is called acute if it is positive and for any non-trivial $\alpha,\beta\in \pi_1(S)$ we do not have $v(t_{\alpha\beta})=v(t_\alpha t_\beta)=v(t_{\alpha\beta^{-1}})$.
\end{definition}

%\textcolor{red}{Est ce que l'hypothèse de positivité est nécessaire ? Je l'ai rajoutée dans la suivante (le premier item de la réciproque le nécessite), mais est superflu ?}

\begin{lemma}[Unique smoothing]\label{uniquesmoothing}
A positive simple valuation $v_\lambda\in \ML$ is acute if and only if for every taut multiloop $\alpha$, and smoothings $\alpha_\pm$ at a self-intersection, we have:
$$i(\lambda,\alpha_+)\ne i(\lambda,\alpha_-).$$
\end{lemma}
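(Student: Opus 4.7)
The plan is to prove both implications separately. For the direction "$v$ acute implies unique smoothing", let $\eta$ be a taut multiloop with a self-intersection at $p$; I would distinguish whether $p$ is a self-crossing of a single component $\gamma\subset\eta$ (case (a)) or a crossing between two distinct components $\gamma_1,\gamma_2$ (case (b)). In case (a), write $\gamma=\alpha\beta$ with based loops $\alpha,\beta\in\pi_1(S,p)$, so that $\eta_+=(\eta\setminus\gamma)\cup\{\alpha,\beta\}$ and $\eta_-=(\eta\setminus\gamma)\cup\{\alpha\beta^{-1}\}$. Since $\gamma$ is itself taut as a component of a taut multiloop, the Smoothing Lemma yields $v(t_{\alpha\beta})=\max(v(t_\alpha t_\beta),v(t_{\alpha\beta^{-1}}))$, and acuteness of $v$ for the pair $(\alpha,\beta)$ prevents the three values from being simultaneously equal, forcing $v(t_\alpha t_\beta)\ne v(t_{\alpha\beta^{-1}})$ and hence $i(\lambda,\eta_+)\ne i(\lambda,\eta_-)$. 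Case (b) is strictly parallel: with $a,b\in\pi_1(S,p)$ based representatives of $\gamma_1,\gamma_2$, the Smoothing Lemma applied to the taut sub-multiloop $\gamma_1\cup\gamma_2$ gives $v(t_a t_b)=\max(v(t_{ab}),v(t_{ab^{-1}}))$, and acuteness of $(a,b)$ yields $v(t_{ab})\ne v(t_{ab^{-1}})$.

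For the converse, given non-trivial $\alpha,\beta\in\pi_1(S)$ I aim to produce a taut multiloop whose smoothings at a distinguished self-intersection realize two of the three quantities in the acute triple, so that the unique smoothing hypothesis prevents that triple from collapsing to a single value. When the geometric intersection number $i([\alpha],[\beta])\ge 1$, I would realize $\alpha,\beta$ as smooth based loops at a point $p$ transverse at $p$, reducing the remaining crossings by rel-basepoint homotopy down to the minimum $i([\alpha],[\beta])$. The multiloop $\{\alpha,\beta\}$ is then taut and its smoothings at $p$ are the single loops $\alpha\beta$ and $\alpha\beta^{-1}$; the hypothesis directly gives $v(t_{\alpha\beta})\ne v(t_{\alpha\beta^{-1}})$, which is enough for acuteness of $(\alpha,\beta)$. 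When $i([\alpha],[\beta])=0$ and $\alpha,\beta$ commute, they are powers of a primitive element $\gamma$, say $\alpha=\gamma^p$ and $\beta=\gamma^q$ with $p,q\ne 0$, and a direct computation from $v(t_{\gamma^n})=|n|\,v(t_\gamma)$ together with positivity of $v$ shows that the chain $|p+q|=|p|+|q|=|p-q|$ would force $pq=0$, a contradiction.

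The remaining obstacle is the case $i([\alpha],[\beta])=0$ with $\alpha,\beta$ non-commuting, where no taut representative of $\{\alpha,\beta\}$ has a crossing and the main scheme breaks down. I plan to handle this by shifting focus to the free loop $\alpha\beta$ itself: take its taut representative, pick any of its self-intersections, and combine the Smoothing Lemma with the unique smoothing hypothesis at that crossing. The decomposition $(u,w)$ produced there need not agree with $(\alpha,\beta)$ as group elements, but since $v$ only depends on free homotopy classes, matching the conjugacy classes of $uw$ and $uw^{-1}$ with those of $\alpha\beta$ and $\alpha\beta^{-1}$ at a suitably chosen self-intersection should propagate the inequality back to the acute triple. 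The delicate bookkeeping consists in locating a self-intersection of the taut representative of $\alpha\beta$ whose combinatorics are compatible with the original decomposition $\alpha\cdot\beta$.
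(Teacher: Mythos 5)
The forward direction of your argument is correct and is essentially the paper's proof: split according to whether the self-intersection is a self-crossing of one component or a crossing between two, then combine the Smoothing Lemma with the definition of acute. (One detail worth saying explicitly: tautness is what guarantees $\alpha,\beta$ are non-trivial, which is needed before acuteness can be invoked.)

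The converse, however, has a genuine gap, and your own case split is part of the problem. You divide according to whether $i([\alpha],[\beta])\ge 1$ or $=0$, but the dichotomy that actually matters is whether the translation axes $A_\alpha$ and $A_\beta$ in $\H^2$ (for the specific group elements $\alpha,\beta$, not their conjugacy classes) intersect. When $i([\alpha],[\beta])\ge 1$ but $A_\alpha\cap A_\beta=\emptyset$ — which happens, for instance, if you replace $\beta$ by a generic conjugate $g\beta g^{-1}$ — there is no taut representative of the multiloop $\{[\alpha],[\beta]\}$ having a crossing at which the based loops are exactly $\alpha$ and $\beta$: the crossings of the geodesic representatives correspond to double cosets $\langle\alpha\rangle g\langle\beta\rangle$ with $A_\alpha\cap gA_\beta\ne\emptyset$, and smoothing there yields the classes $[\alpha g\beta g^{-1}]$ and $[\alpha g\beta^{-1}g^{-1}]$, which are not $[\alpha\beta]$ and $[\alpha\beta^{-1}]$ when $g\notin\langle\alpha\rangle\langle\beta\rangle$. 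So "reducing the remaining crossings rel basepoint down to $i([\alpha],[\beta])$" while keeping $\alpha,\beta$ anchored transversely at $p$ is not in general possible; the multiloop you obtain is not taut, and the hypothesis does not apply. This missed case is not exotic: it is precisely the configuration covered by the paper's second case.

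You correctly flag the $i=0$, non-commuting case as an obstacle but do not resolve it, and the "delicate bookkeeping" you defer is exactly the content of the paper's argument: one must locate the \emph{right} self-intersection of the taut representative of $\alpha\beta$, and the paper does so by observing that after possibly replacing $\beta$ by $\beta^{-1}$ so the axes $A_\alpha,A_\beta$ translate in the same direction, the axes $A_{\alpha\beta}$ and $A_{\beta\alpha}$ cross at a point that projects to a self-intersection of the geodesic loop $\alpha\beta$ whose smoothings are $\alpha\cup\beta$ and $\alpha\beta^{-1}$. That single hyperbolic-geometry observation handles both the case you left open and the case you miss; I recommend replacing the intersection-number dichotomy with the axis dichotomy and arguing as the paper does.
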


This justifies the terminology: $v\in \ML$ is acute when for every such a multiloop $\alpha$, the values $i(\lambda,\alpha), i(\lambda,\alpha_+), i(\lambda,\alpha_-)$ are the lengths of an acute isosceles triangle such that one of the longest edges corresponds to the multiloop $\alpha$ with greatest self-intersection number.

\begin{proof}
Suppose $v\in \ML$ is acute.
By decomposing $\alpha$ into connected components, we observe that the smoothing concerns at most two of them, and the proof reduces to the following cases. 

\begin{enumerate}
    \item
Either $\alpha$ is a single loop self-intersecting at $p$. Denote by $\gamma,\delta\in \pi_1(S,p)$ the elements such that $\alpha$ is homotopic to $\gamma\delta$. The tautness assumption implies that $\gamma$ and $\delta$ are non-trivial. Depending on the combinatorics of the intersection, one smoothing is homotopic to $\gamma\delta^{-1}$ and the other to the union $\gamma\cup \delta$. If $v(t_{\gamma\delta^{-1}})= v(t_\gamma t_\delta)$ then, from the acute property $v(t_{\gamma\delta})$ differs from them, which contradicts the Smoothing Lemma \ref{smoothinglemma}. 
    \item[2.]
    
Otherwise the multiloop $\alpha$ has two components intersecting at $p$. We denote by $\gamma,\delta\in \pi_1(S,p)$ the (non-trivial) homotopy classes of the two components. Again, $\alpha_+$ and $\alpha_-$ are homotopic to $\gamma\delta$ and $\gamma\delta^{-1}$: the reasoning is the same.
\end{enumerate}

Conversely, suppose $\alpha,\beta \in \pi_1(S)$ are non trivial. 
If they are powers of a same element, say $\alpha=\gamma^n$ and $\beta=\gamma^m$, then $v(t_{\alpha\beta})=\lvert n+m \rvert v(t_\gamma)$ and $v(t_{\alpha\beta^{-1}})=\lvert n-m\rvert v(t_\gamma)$. As $v(t_\gamma)>0$, we have $mn=0$ which is impossible.
% As $v(t_\gamma)>0$, we have $(m+n)^2=(m-n)^2$ thus $4mn=0$ which is impossible.

Consider a hyperbolic structure on $S$, so that $\alpha$ and $\beta$ act on $\tilde{S}\simeq\H^2$ by hyperbolic translations along distinct axes $A_\alpha$ and $A_\beta$ respectively.

\begin{enumerate}
%    \item[2.] If $A_\alpha = A_\beta$ then a power of $\alpha$ is equal to a power of $\beta$, so they are both powers of some primitive element $\gamma$. In that case $\alpha \beta = \gamma^n$ while $\alpha \beta^{-1} = \gamma^{m}$ and $\lvert m \rvert \ne \lvert n\rvert$ since $\alpha,\beta$ are non trivial. Hence the condition $v(t_{\alpha\beta})\ne v(t_{\alpha\beta^{-1}})$ of Definition \ref{acute} is obviously satisfied as $v(t_{\gamma}) \ne 0$ by positivity.
    
    \item If $A_\alpha\cap A_\beta=\{p\}$, then $p$ projects to a point on $\alpha\cap \beta$. The smoothings at $p$ are $\alpha\beta$ and $\alpha\beta^{-1}$. The assumption $i(\lambda, \alpha\beta)\ne i(\lambda, \alpha\beta^{-1})$ says that $v$ satisfies the condition $v(t_\alpha t_\beta)\ne v(t_{\alpha\beta^{-1}})$ ensuring that of Definition \ref{acute}.
    
    \item If $A_\alpha\cap A_\beta=\emptyset$, then up to replacing $\beta$ with $\beta^{-1}$, we may assume the axes point in the same direction. Now, the axes of $\alpha\beta$ and $\beta\alpha$ intersect in a point $p$. This point projects to a self-intersection of $\alpha\beta$ which after smoothing gives alternatively $\alpha\cup \beta$ and $\alpha \beta^{-1}$. The assumption $i(\lambda, \alpha \cup \beta)\ne i(\lambda, \alpha\beta^{-1})$ says that $v$ satisfies the condition $v(t_\alpha t_\beta)\ne v(t_{\alpha\beta^{-1}})$ ensuring that of Definition \ref{acute}.
\end{enumerate}
%
%Then at least one of the homotopy classes $\alpha\beta$, $\alpha\beta^{-1}$, $\alpha \cup \beta$ is not simple.
%
%Thus we may consider a link $L$ with taut projection intersecting at a point for which $[L],[L_+],[L_-]$ are represented, up to sign and permutations, by $t_{\alpha\beta},t_{\alpha\beta^{-1}},t_\alpha t_\beta$. The assumption $v([L_+])\ne v([L_-])$ ensures that one of these trace functions has smaller valuation than the others. 
%\end{proof}
%
%\begin{lemma}
%Let $\alpha,\beta$ be non-trivial elements in $\pi_1(S)$ and $\alpha\cup\beta$ be the multiloop representing them in taut position. We denote by $C(\alpha)$ the conjugacy class of $\alpha\in \pi_1(S)$ and consider the action of $\pi(S)$ on $C(\alpha)\times C(\beta)$ by simultaneous conjugation. Then, the map $\alpha\cap \beta\to C(\alpha)\times C(\beta)/\pi_1(S)$ sending $p\in \alpha\cap \beta$ to the pair $(\alpha_p,\beta_p)$ corresponding to putting the base point at $p$ is a bijection.
\end{proof}

% In the remaining part of this section, we will use the following lemma whose proof is postponed until the next section.
% \begin{lemma}
% The set of acute simple valuations has full measure in $\ML$.
% \end{lemma}

\subsection{Strict valuations}
Given a simple valuation $v$ we can extend it to $\C(X)$ by $v(f/g)=v(f)-v(g)$. We define its valuation ring $\Oo_v=\{f\in \C(X)\mid v(f)\le 0\}$ which has a unique maximal ideal $\mathcal{M}_v=\{f\in \C(X)\mid v(f)<0\}$ and residue field $k_v=\Oo_v/\Mm_v$.

\begin{lemma}\label{strict-residual}
A simple valuation $v=v_\lambda$ satisfies $k_v=\C$ if and only if for all distinct multicurves $\mu,\nu$ we have $i(\lambda,\mu)\ne i(\lambda,\nu)$.
\end{lemma}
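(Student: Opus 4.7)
The plan is to prove both implications by direct manipulation in the multicurve basis, exploiting the maximality axiom of simple valuations (Definition \ref{defsimple} (3)).

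\textbf{($\Leftarrow$) Distinct lengths imply $k_v=\C$.} Assume all the $i(\lambda,\mu)$ for $\mu\in\MC$ are pairwise distinct. Take any $h\in\Oo_v$ and write $h=f/g$ with $f=\sum m_\mu t_\mu$ and $g=\sum n_\nu t_\nu$ in the multicurve basis. By the simple valuation axiom, $v(f)=\max\{i(\lambda,\mu)\mid m_\mu\ne 0\}$ and this maximum is attained at a \emph{unique} multicurve $\mu_0$; similarly $v(g)$ is attained at a unique $\nu_0$. If $v(f)<v(g)$ then $h\in\Mm_v$ and $h\equiv 0$ in $k_v$. Otherwise $v(f)=v(g)$ forces $i(\lambda,\mu_0)=i(\lambda,\nu_0)$, so by distinctness $\mu_0=\nu_0$. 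Setting $c=m_{\mu_0}/n_{\mu_0}\in\C$, the combination $f-cg$ has zero coefficient at $\mu_0$, so by axiom (3) $v(f-cg)<v(g)$, i.e.\ $h-c\in\Mm_v$. Thus every class in $k_v$ is represented by a scalar.

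\textbf{($\Rightarrow$) Equal lengths imply $k_v\supsetneq\C$.} Argue contrapositively: assume there exist distinct multicurves $\mu\ne\nu$ with $i(\lambda,\mu)=i(\lambda,\nu)$. Consider $h=t_\mu/t_\nu$, which lies in $\Oo_v$ since $v(t_\mu)=v(t_\nu)$. Suppose, for contradiction, that $h-c\in\Mm_v$ for some $c\in\C$, meaning $v(t_\mu-ct_\nu)<v(t_\nu)$. Since $\mu$ and $\nu$ are distinct basis elements, there is no cancellation: the simple valuation axiom gives
\[
v(t_\mu-ct_\nu)=\max\bigl(v(t_\mu),\,v(ct_\nu)\bigr)=v(t_\nu),
\]
regardless of whether $c$ is zero or not. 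This contradicts $h\equiv c$ mod $\Mm_v$, so the class of $h$ in $k_v$ is not constant, proving $k_v\ne\C$.

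There is no real obstacle in this argument; the only delicate point is recognizing that the maximality axiom (3) of a simple valuation is precisely what guarantees both the uniqueness of the leading multicurve in the first direction and the absence of cancellation in the second. The dichotomy ``unique maximum versus repeated maximum'' on the multicurve basis is what translates separation of lengths into triviality of the residue field.
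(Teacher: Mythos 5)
Your proof is correct and follows essentially the same route as the paper's: both directions hinge on the maximality axiom (3) for simple valuations, with ($\Leftarrow$) extracting the unique leading multicurve of numerator and denominator to produce the scalar $c$, and ($\Rightarrow$) observing that $t_\mu - ct_\nu$ cannot drop in valuation since $\mu,\nu$ are distinct basis elements. The only cosmetic difference is that you phrase ($\Rightarrow$) as a contrapositive while the paper derives a direct contradiction.
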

Following \cite{MS_Aut(CV)_2020}, we will refer to them as \emph{strict} valuations. We showed that the set of non-strict valuations has zero measure in $\ML$.
\begin{proof}
Suppose that $k_v=\C$ and consider two distinct multicurves $\mu$ and $\nu$. If $v(t_\mu)=v(t_\nu)$ then $t_\mu/t_\nu\in \Oo_v\setminus \Mm_v$ so there exists $\lambda\in \C^*$ such that $t_\mu/t_\nu-\lambda\in \Mm_v$ thus $v(t_\mu/t_\nu-\lambda)<0$. But this means $v(t_\mu-\lambda t_\nu)<v(t_\nu)$, which contradicts the third condition in Definition \ref{defsimple}.

Conversely, suppose that $v$ takes distinct values on distinct multicurves and pick $f=P/Q\in \Oo_v\setminus\Mm_v$. Then $v(P)=v(Q)$: decomposing $P$ and $Q$ in the basis of multicurves, they are necessarily of the form $P=a t_\mu+P'$ and $Q=b t_\mu+Q'$ with $a,b\in \C^*$ and $v(P'), v(Q')<v(t_\mu)$. 
This gives 
$$f=\frac{at_\mu+P'}{bt_\mu+Q'}=\frac{a+P'/t_\mu}{b+Q'/t_\mu}=\frac{a}{b}\mod \Mm_v.$$
\end{proof}

For a simple valuation $v=v_\lambda$, the set of values $\Lambda_v^+ =v(\C[X]\setminus\{0\})$ coincides with $\Lambda^+_v=\{i(\lambda,\mu)\mid \mu \in\MC\}$ by Condition 3 in Definition \ref{defsimple}, and has the structure of an abelian semi-group by Condition 2 in Definition \ref{defsimple}. Its associated group is $\Lambda_v=v(\C(X)^*)$ and consists in differences of $\lambda$-lengths.

%For any simple valuation $v=v_\lambda$ we define a semi-group $\Lambda^+_v=v(\C[X]\setminus\{0\})\subset [0,+\infty)$ and a group $\Lambda_v=v(\C(X)^*)$. As a set, the first one is simply $$\Lambda^+_v=\{i(\lambda,\mu), \mu \text{ multicurve in }S\}.$$

When $v$ is strict, the map $\mu\mapsto i(\lambda,\mu)$ is a bijection between $\MC$ and $\Lambda_v^+$. It is enlightening to think about the semi-group structure on multicurves obtained by pulling back the addition in $\Lambda_v^+$ in the following way.
Let $\mu$ and $\nu$ be two multicurves, viewed as elements of $K(S,\C)$. All smoothings of $\mu\cup \nu$ are multicurves $\xi$ with $i(\lambda,\xi)\le i(\lambda,\mu)+i(\lambda,\nu)$ and equality holds for exactly one of them corresponding to the ``sum of $\mu$ and $\nu$ with respect to $v$".

We define the rational rank of $v$ to be $\ratrk(v)=\dim_\Q \Lambda\otimes \Q$; it satisfies the following Abhyankar inequality (see \cite{Otal_compact-repres_2015} and references therein):
$$\ratrk(v)+\degtr(k_v)\le\dim X$$
from which we deduce that if a simple valuation has maximal rational rank, that is $\ratrk(v)=\dim X$, then it is strict. We will prove the converse in Section \ref{sectionpoisson}.

\section{Newton polytopes of trace functions}

This section relies on the following lemma whose proof is postponed to Section \ref{sectiontrees}.

\begin{lemma}
The set of acute simple valuations has full measure in $\ML$.
\end{lemma}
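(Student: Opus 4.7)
The plan is to show that the non-acute locus in $\ML$ is contained in a countable union of proper PL hypersurfaces, each of Thurston measure zero.

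By the Unique Smoothing Lemma \ref{uniquesmoothing}, $v_\lambda$ fails to be acute exactly when either $\lambda$ is not positive (filling), or there exists a taut multiloop $\alpha$ with a self-intersection $p$ whose two smoothings $\alpha_\pm$ satisfy $i(\lambda, \alpha_+) = i(\lambda, \alpha_-)$. The non-filling set $\mathcal{N}_0 = \bigcup_\gamma \{\lambda : i(\lambda, \gamma) = 0\}$, with $\gamma$ ranging over essential simple closed curves, is a countable union of proper PL hyperplanes and hence a classical measure-zero set (genericity of filling laminations). The second failure mode gives
$$\mathcal{N}_1 = \bigcup_{(\alpha, p)} H_{\alpha, p}, \qquad H_{\alpha, p} = \{\lambda \in \ML : i(\lambda, \alpha_+) = i(\lambda, \alpha_-)\},$$
where $(\alpha, p)$ ranges over the countable set of pairs of taut multiloops and self-intersections; it will therefore suffice to prove that each $H_{\alpha, p}$ has Thurston measure zero.

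In any train-track chart of $\ML$ the function $\lambda \mapsto i(\lambda, \alpha_+) - i(\lambda, \alpha_-)$ is linear, so $H_{\alpha,p}$ is a proper PL hypersurface (of measure zero) unless this function vanishes identically. The heart of the argument will thus be to verify that $\alpha_+$ and $\alpha_-$ define distinct intersection functions on $\ML$, which reduces to showing that they are distinct as multiloops. Tautness of $\alpha$ is essential here: following the case analysis in the proof of Lemma \ref{uniquesmoothing}, either $\alpha$ is a single loop and the two smoothings have different numbers of components, or $\alpha$ consists of two components meeting at $p$ with based classes $\gamma, \delta \in \pi_1(S, p)$ whose smoothings realise the free homotopy classes of $\gamma\delta$ and $\gamma\delta^{-1}$. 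In the latter case, conjugacy would force $\delta$ to centralise $\gamma\delta$, hence $[\gamma, \delta] = 1$; since $\pi_1(S)$ is torsion-free hyperbolic, $\gamma$ and $\delta$ would then be powers of a common element, contradicting the existence of an essential transverse intersection at $p$.

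Finally, distinct multiloops produce distinct intersection functions on $\ML$ by the linear independence of the functionals $i(\cdot, \eta)$ over distinct free homotopy classes $\eta$ --- a standard consequence of the embedding of multiloops into the space of geodesic currents, which one may also verify directly by exhibiting a simple curve whose geometric intersection number separates $\alpha_+$ and $\alpha_-$. Granting this, each $H_{\alpha, p}$ has measure zero, so $\mathcal{N}_0 \cup \mathcal{N}_1$ is a countable union of measure-zero sets and the lemma follows. I expect the main obstacle to be the combinatorial verification that the two smoothings at a taut self-intersection yield non-conjugate elements of $\pi_1(S)$; everything else is a routine application of linearity and countability in PL coordinates.
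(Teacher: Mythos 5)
Your approach is genuinely different from the paper's. The paper first proves Proposition~\ref{maxacutriv}, which identifies acute valuations with those whose dual tree $T$ is trivalent, equivalently with maximal measured laminations, and then simply cites the classical fact (Lemma~2.3 of Lindenstrauss--Mirzakhani) that maximal laminations have full Thurston measure. You instead try to show directly that the failure locus, described via Lemma~\ref{uniquesmoothing}, is a countable union of ``thin'' sets. The paper's route is substantially cleaner precisely because it outsources the measure-theoretic content to a known result about maximality; your direct route forces you to establish some nontrivial facts from scratch, and two of those steps are not correct as stated.

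First, the claim that $\lambda\mapsto i(\lambda,\alpha_+)-i(\lambda,\alpha_-)$ is \emph{linear} in a train-track chart is wrong: for a non-simple closed curve $\alpha$, the intersection function $i(\cdot,\alpha)$ on a train-track chart is piecewise linear but not linear. This is salvageable (a nonconstant PL function still has a measure-zero zero set on each maximal cone), but the argument as written conflates the simple and non-simple cases.

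Second, and more seriously, the step ``distinct multiloops produce distinct intersection functions on $\ML$'' is false in general: $\gamma^n$ (a single curve) and $n\cdot\gamma$ (a multiloop with $n$ parallel components) are distinct multiloops with identical intersection functions $i(\cdot,\gamma^n)=n\,i(\cdot,\gamma)=i(\cdot,n\gamma)$. So the distinctness of $\alpha_+$ and $\alpha_-$ as multiloops does not, by itself, imply that $H_{\alpha,p}$ is proper. You would need to show that the two taut smoothings cannot differ merely by such a ``regrouping'' of powers, which is exactly the kind of tautness bookkeeping the paper carries out inside the proof of Lemma~\ref{uniquesmoothing} (noting that $\gamma$ and $\delta$ cannot be powers of a common element, etc.), together with a concrete argument that the resulting intersection functionals actually separate --- the paper's prior work \cite[Lemma~1.6]{MS_Aut(CV)_2020} supplies this only for \emph{multicurves}, and your appeal to geodesic currents does not immediately cover the multiloop case. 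Relatedly, the non-conjugacy argument for $\gamma\delta$ versus $\gamma\delta^{-1}$ is incomplete: conjugacy (up to inversion, since multiloops carry no orientation) does not straightforwardly yield $[\gamma,\delta]=1$ as you assert. The overall strategy could probably be repaired, but it would require more work than the paper's one-line deduction from Proposition~\ref{maxacutriv}.
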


% In particular the set of strict acute valuations has full measure in $\ML$.

\begin{definition}
Let $v\in \ML$ be any simple valuation and $f \in \C[X]$ any function decomposed as $\sum m_\mu t_\mu$ in the multicurve basis.
\begin{itemize}
\item[-] The multicurve $\mu\in \Supp(f)$ is $v$-extremal in $f$ if  $v(t_\nu)<v(t_\mu)$ for every other $\nu\in \Supp(f)$. 

\item[-] The multicurve $\mu$ is extremal in $f$ if it is $v$-extremal in $f$ for some $v$.

\item[-] The function $f$ is unitary if $m_\mu=\pm 1$ for any extremal multicurve in $f$.

\item[-] The Newton set of $f$ is the subset $\NewSet(f)\subset \MC$ of extremal curves in $f$.

\end{itemize}
\end{definition}
We observe that if $v$ is strict, then $\mu$ is $v$-extremal in $f$ if and only if $v(f)=v(t_{\mu})$. Moreover, as strict valuations are dense in $\ML$, a multicurve is extremal in $f$ if and only if it is $v$-extremal in $f$ for some strict $v$.

\subsection{Trace functions are unitary}

\begin{theorem}[Unitarity]
If $\alpha$ is a multiloop in $S$, then $t_\alpha$ is unitary.
\end{theorem}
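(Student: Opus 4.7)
The plan is to induct on the minimum self-intersection number $n(\alpha)$ of a representative of the free homotopy class of $\alpha$, fixing throughout a taut representative. The base case $n(\alpha)=0$ is immediate: $\alpha$ then represents a multicurve $\mu_0$ and $t_\alpha=t_{\mu_0}$ has coefficient $1$ on $\mu_0$ and $0$ elsewhere, hence is unitary.

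For the inductive step, pick a self-intersection point $p$ of the taut representative and let $\alpha_+, \alpha_-$ be the two smoothings at $p$. The trace (equivalently skein) relation at $A=-1$ reads
\[
t_\alpha \;=\; \epsilon_+\, t_{\alpha_+} + \epsilon_-\, t_{\alpha_-}, \qquad \epsilon_\pm \in \{\pm 1\}.
\]
Since $\alpha$ was taut, each $\alpha_\pm$ has strictly fewer self-intersections, so the inductive hypothesis applies to $t_{\alpha_+}$ and $t_{\alpha_-}$.

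Now let $\mu \in \NewSet(t_\alpha)$. Being $v$-extremal in $t_\alpha$ is an open condition on $v \in \ML$, and the set of valuations that are simultaneously strict and acute has full measure, hence is dense, in $\ML$; so we may pick such a $v$ realizing $\mu$ as $v$-extremal in $t_\alpha$. In particular $v(t_\mu)=v(t_\alpha)$. The Smoothing Lemma gives $v(t_\alpha)=\max(v(t_{\alpha_+}),v(t_{\alpha_-}))$, and the unique-smoothing characterization of acute valuations (Lemma \ref{uniquesmoothing}) forces these two quantities to be distinct; say $v(t_{\alpha_+})>v(t_{\alpha_-})$, so $v(t_{\alpha_+})=v(t_\mu)$.

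Writing $t_{\alpha_\pm}=\sum_\nu m^\pm_\nu t_\nu$, the coefficient of $\mu$ in $t_\alpha$ equals $\epsilon_+ m^+_\mu + \epsilon_- m^-_\mu$; I claim one summand vanishes while the other equals $\pm 1$. First, $m^-_\mu=0$: otherwise $v(t_{\alpha_-})\ge v(t_\mu)> v(t_{\alpha_-})$, a contradiction. Second, since $v$ is strict, distinct multicurves have distinct $v$-values (Lemma \ref{strict-residual}), so the maximum in $v(t_{\alpha_+})=\max\{v(t_\nu)\mid m^+_\nu\ne 0\}$ is attained at a unique multicurve, which (having $v$-value $v(t_\mu)$) must be $\mu$. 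Hence $\mu$ is $v$-extremal in $t_{\alpha_+}$, and by the inductive hypothesis $m^+_\mu=\pm 1$. The coefficient of $\mu$ in $t_\alpha$ is therefore $\epsilon_+ m^+_\mu = \pm 1$.

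The conceptual heart of the argument is the use of acute valuations: they ensure that among the two smoothings of a taut multiloop at a self-intersection, exactly one strictly dominates at $v$, which is what prevents the $\pm 1$ contributions of $t_{\alpha_+}$ and $t_{\alpha_-}$ from cancelling in the sum. The only technical point is that smoothing at a self-intersection of a \emph{taut} representative strictly decreases the minimal self-intersection number, which legitimises the induction.
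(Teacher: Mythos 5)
Your proof is correct and takes essentially the same route as the paper's: induct on the (taut) self-intersection number, pick a strict acute valuation $v$ making the chosen extremal multicurve $v$-extremal, use the Smoothing Lemma together with the acuteness criterion (Lemma~\ref{uniquesmoothing}) to conclude that exactly one of $v(t_{\alpha_\pm})$ dominates, and pass the extremal coefficient through the dominating side. You merely spell out two steps the paper leaves implicit, namely that strict acute valuations are dense (so one exists realizing $\mu$ as $v$-extremal) and why $m^-_\mu=0$ while $\mu$ is $v$-extremal in $t_{\alpha_+}$; both are sound.
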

\begin{proof}
Let $v$ be a strict acute valuation $v$ and $\mu$ be the unique multicurve such that $v(t_\alpha)=v(t_{\mu})$: we must prove that $m_{\mu}=\pm 1$.
We proceed by induction on the number of intersections of $\alpha$. If there are none, then the result is obvious. Otherwize, put $\alpha$ in taut position and consider its smoothings at an intersection: Lemma \ref{smoothinglemma} and the assumption that $v$ is acute shows that $v(t_{\alpha_+})\ne v(t_{\alpha_-})$. One can suppose that $v(t_\alpha)=v(t_{\alpha_+})>v(t_{\alpha_-})$. The coefficient of $\mu$ in $t_\alpha$ is the same up to sign as in $t_{\alpha_+}$, and the induction hypothesis gives the result. \end{proof}

\begin{remark}
If we represent a taut multiloop as the projection of a banded link $L$ in $S\times[0,1]$, we may decompose it in the basis of multicurves $\mu\in K(S,\Z[A^{\pm}])$ with blackboard framing. 
Then, the coefficient of $\mu$ in $L$ is equal to $A^{n^+-n^-}$ where $n^{\pm}$ count the number of $\pm$-resolutions performed while transforming $L$ into $\mu$.
Putting $A=-1$, we find the sign $(-1)^s$ for the extremal coefficient, where $s$ is the number of self-intersections of $\alpha$. The proof is the same, using inductively the skein relation.
%Using $[L]=-[L_-]-[L_+]$, the proof shows that if $\iota$ counts the number of intersections in a taut diagram for $L$, then its extremal coefficients are $m_\nu = (-1)^\iota$.
\end{remark}

\begin{remark}
We know from \cite{Dylan_Positive_2014} that an ingenious triangular change of basis in $\C[X]$ make the multiplicative structure constants positive. In this basis, the Newton polytope will be the same, and its extremal coefficients will be $1$.
\end{remark}

\begin{corollary}\label{strictimpliesacute}
Any strict valuation is acute. 
\end{corollary}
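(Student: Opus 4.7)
The plan is to deduce acuteness of a strict valuation $v$ from the characterization in Lemma \ref{uniquesmoothing}: it suffices to show that $v$ is positive, and that for every taut multiloop $\alpha$ with self-intersection the two smoothings satisfy $v(t_{\alpha_+}) \neq v(t_{\alpha_-})$. Positivity is immediate from strictness: since $v(t_\emptyset) = v(1) = 0$ and $v$ distinguishes multicurves, any non-empty $\mu$ has $v(t_\mu) \neq 0$, hence $v(t_\mu) > 0$ because $v$ is non-negative on $\C[X]$; any non-constant $f \in \C[X]$ has a unique $v$-maximising multicurve in its support (by strictness), which must be non-empty, so $v(f) > 0$.

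For the smoothing inequality I argue by contradiction: assume $v(t_{\alpha_+}) = v(t_{\alpha_-})$ for some taut multiloop $\alpha$ smoothed at a self-intersection $p$. Strictness provides a unique $v$-extremal multicurve $\mu_\pm$ in each $t_{\alpha_\pm}$, and from $v(t_{\mu_+}) = v(t_{\alpha_+}) = v(t_{\alpha_-}) = v(t_{\mu_-})$ strictness again forces $\mu_+ = \mu_- =: \mu$. Since $\mu$ is $v$-extremal, hence extremal, in the multiloop trace functions $t_{\alpha_\pm}$, the Unitarity Theorem yields coefficients $\varepsilon_\pm \in \{\pm 1\}$ for $t_\mu$ in $t_{\alpha_\pm}$.

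The skein identity at the self-intersection (equivalently, the trace relation $t_\gamma t_\delta = t_{\gamma\delta} + t_{\gamma\delta^{-1}}$) then reads $t_\alpha = \delta_+ t_{\alpha_+} + \delta_- t_{\alpha_-}$ for some signs $\delta_\pm \in \{\pm 1\}$ determined by the parities $|\alpha_\pm| - |\alpha|$; crucially, tautness of $\alpha$ prevents either smoothing from containing a contractible component, so no spurious factor $-A^2 - A^{-2} = -2$ intervenes. Hence the coefficient of $t_\mu$ in $t_\alpha$ is $\delta_+ \varepsilon_+ + \delta_- \varepsilon_- \in \{-2, 0, 2\}$. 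On the other hand, the Smoothing Lemma \ref{smoothinglemma} gives $v(t_\alpha) = v(t_{\alpha_+}) = v(t_\mu)$, and strictness then forces $\mu \in \Supp(t_\alpha)$ and $v$-extremal in $t_\alpha$, so its coefficient is nonzero; Unitarity demands this coefficient equal $\pm 1$, contradicting its belonging to $\{-2, 0, 2\}$.

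The main subtlety in a careful writeup is the sign bookkeeping in the skein relation together with confirming that smoothings of a taut multiloop have no contractible components (otherwise an extra factor of $2$ would enter). Once those are pinned down, the argument is a clean interplay between strictness (making the extremal multicurves of $t_\alpha$, $t_{\alpha_+}$ and $t_{\alpha_-}$ uniquely determined and directly comparable via $v$) and Unitarity (pinning their coefficients to $\pm 1$).
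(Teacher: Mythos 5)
Your argument is correct in outline and reaches the conclusion by a genuinely different route than the paper. The paper's proof is a short density argument: from $v(t_{\alpha_+})=v(t_{\alpha_-})$ for a strict $v$ it extracts a common $v$-extremal multicurve $\mu$; since this is an open condition on the valuation and acute valuations have full measure (hence are dense), some acute valuation $v'$ shares the same extremal multicurve, giving $v'(t_{\alpha_+})=v'(t_{\alpha_-})$ and contradicting Lemma~\ref{uniquesmoothing}. You instead run the contradiction through coefficient arithmetic: Unitarity pins the coefficient of $t_\mu$ in each $t_{\alpha_\pm}$ to $\pm1$, the trace relation forces the coefficient of $t_\mu$ in $t_\alpha$ into $\{-2,0,2\}$, while the Smoothing Lemma together with strictness forces it to be $\pm1$. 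Both proofs ultimately rest on the density of acute valuations --- the paper invokes it directly, you import it through the proof of Unitarity --- but your version is arguably more concrete, reducing everything to an arithmetic incompatibility of integers.

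One justification in your writeup is not correct as stated. You claim that \emph{tautness of $\alpha$ prevents either smoothing from containing a contractible component}. This is false in general: take $\alpha=\gamma^2$ for $\gamma$ a simple closed curve, which is taut with a single self-intersection; smoothing it one way yields $\gamma\cup\gamma$ and the other way yields a contractible loop, so $t_\alpha = t_\gamma^2 - 2$ and the spurious factor $2$ does appear. Your argument survives this, but for a different reason than the one you gave: if the smoothing $\alpha_-$ were a contractible loop then $v(t_{\alpha_-})=v(2)=0$, while positivity of $v$ (which you did establish from strictness) forces $v(t_{\alpha_+})>0$; so under the running hypothesis $v(t_{\alpha_+})=v(t_{\alpha_-})$ the degenerate case is excluded and both $\alpha_\pm$ are honest multiloops to which Unitarity applies. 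You should replace the incorrect appeal to tautness by this observation. (In the two-component case $\alpha=\gamma\cup\delta$ tautness does rule out trivial smoothings, since $\gamma\delta^{\pm1}=1$ would force $\gamma$ and $\delta$ to be freely homotopic and hence disjoint, contradicting $p\in\gamma\cap\delta$.)
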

\begin{proof}
Let $v$ be a strict valuation and consider a taut multiloop $\alpha$. Suppose $v(t_{\alpha_+})=v(t_{\alpha_-})$. Then $t_{\alpha_+}$ and $t_{\alpha_-}$ must have the same $v$-extremal multicurve $\mu$. This defines an open condition on $v\in \ML$. But simple acute valuations are dense in $\ML$ so the same will hold for some acute valuation, contradicting Lemma \ref{uniquesmoothing}. The conclusion follows from the converse part of that lemma.
\end{proof}

% \begin{corollary}
% An acute valuation $v\in \ML$ is strict.
% \end{corollary}

\subsection{Extremal multicurves of $t_\mu t_\nu$ and $\{t_\mu, t_\nu\}$}

Let $\mu$ and $\nu$ be multicurves in $S$ and consider a taut immersion $\mu \cup \nu$ for their union. Note that such an immersion is unique since any two are related by triangle moves \cite{Hass_shortening_1994}, but there are no triangles as the components of $\mu$ and $\nu$ are simple.

We define the embedding $L_\mu(\nu)$ obtained by smoothing all intersections of $\mu \cup \nu$ with a left turn as we travel along a segment of $\mu$ and meet a segment of $\nu$. Smoothing all intersections with a right turn would yield $L_\nu(\mu)$.

This is the product considered by Luo in \cite{Luo_simple_2010}, in particular his {Lemma 8.1} shows that $L_\mu(\nu)$ is a multicurve (it has no trivial components) and his {Theorem 2.1} describes several of its properties.

\begin{proposition} \label{Luo-smoothings-extremal}
Let $\mu,\nu$ be multicurves. The multicurves $L_\mu(\nu)$ and $L_\nu(\mu)$ are extremal for the product $t_\mu t_\nu$, and if $i(\mu,\nu)>0$ they are disctinct.
\end{proposition}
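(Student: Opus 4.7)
The plan is to exhibit, for each of $L_\mu(\nu)$ and $L_\nu(\mu)$, a measured lamination realizing it as the unique $v$-extremal multicurve in $t_\mu t_\nu$, and then to argue distinctness either from this construction or from Luo's paper.

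First I would identify $\Supp(t_\mu t_\nu)$ explicitly. Applying the Kauffman skein formula \eqref{smoothing} at $A=-1$ to a taut immersion of $\mu\cup \nu$ with set of crossings $C$, we obtain
\[
t_\mu t_\nu \;=\; \sum_{\xi : C\to\{\pm\}} (-1)^{w_\xi}\, t_{D_\xi},
\]
where $D_\xi$ is the multicurve obtained after smoothing each crossing according to the pattern $\xi$. By the local convention of the ``left turn'' smoothing, the two constant patterns yield $D_+ = L_\mu(\nu)$ and $D_- = L_\nu(\mu)$, so $\Supp(t_\mu t_\nu)\subset\{D_\xi\}_\xi$ contains both of these multicurves.

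Next I turn to extremality. By the argument used in the proof of Theorem \ref{ThmA}, for any acute valuation $v\in\ML$, the $v$-extremal multicurve in $t_\mu t_\nu$ is obtained by iteratively applying the Smoothing Lemma \ref{smoothinglemma} to $\mu\cup\nu$, selecting at each crossing the smoothing which preserves $v$-length. Hence it suffices to produce $\lambda$ such that the left smoothing is strictly $v_\lambda$-preferred at every crossing of $\mu\cup\nu$. Geometrically, I would construct such $\lambda$ as a measured lamination carried, with positive weight on every branch, by the train track $\tau_L$ obtained from $\mu\cup\nu$ by resolving every crossing with a left smoothing (and placing cusps consistent with that resolution). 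The carrying cone of $\tau_L$ contains $L_\mu(\nu)$ as a vertex and has non-empty interior; for any $\lambda$ in this interior, the switch conditions at each former crossing force the leaves of $\lambda$ to enter the ``left channel'', so that the left smoothing strictly preserves $i(\lambda,\cdot)$ while the right smoothing strictly decreases it. Consequently $L_\mu(\nu)$ is the unique $v_\lambda$-extremal multicurve in $t_\mu t_\nu$, which places it in $\NewSet(t_\mu t_\nu)$. Exchanging the roles of $\mu$ and $\nu$ (using a train track $\tau_R$ obtained by right smoothings), a symmetric argument provides $\lambda'$ with $L_\nu(\mu)$ as its unique extremal multicurve.

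When $i(\mu,\nu)>0$, distinctness of $L_\mu(\nu)$ and $L_\nu(\mu)$ is the content of Luo's Theorem 2.1 in \cite{Luo_simple_2010}. Alternatively, it can be deduced from the construction: if the two multicurves coincided, the same multicurve would be the unique extremal for both $v_\lambda$ and $v_{\lambda'}$, yet these valuations witness opposite smoothing preferences at every one of the $i(\mu,\nu)>0$ crossings, contradicting the uniqueness of the smoothing procedure. The main technical obstacle is the analysis of $\tau_L$: verifying that it is recurrent (so that its carrying cone is non-empty and of full dimension) and that the local switch conditions indeed make the left smoothing strictly preferred, which requires a careful local examination at each crossing and attention to potential degenerate configurations when $\mu$ or $\nu$ has components disjoint from the other.
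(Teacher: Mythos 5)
Your approach is genuinely different from the paper's, but it contains a directional confusion that undermines the key step. You propose taking $\lambda$ in the interior of the carrying cone of the train track $\tau_L$ which carries $L_\mu(\nu)$, and claim $L_\mu(\nu)$ is then $v_\lambda$-extremal. But a lamination carried by $\tau_L$ is ``close'' to $L_\mu(\nu)$ and hence has \emph{small} intersection with it; what is needed for $L_\mu(\nu)$ to be $v_\lambda$-extremal in $t_\mu t_\nu$ is that $i(\lambda, L_\mu(\nu))$ be strictly \emph{larger} than $i(\lambda, D_\xi)$ for all other smoothings. Concretely, on the torus with $\mu=(1,0)$, $\nu=(0,1)$, $L_\mu(\nu)=(1,1)$, $L_\nu(\mu)=(1,-1)$, a lamination of slope near $1$ (carried by $\tau_L$) has $i(\lambda,(1,1))\approx 0$ and $i(\lambda,(1,-1))\approx 2$, so it witnesses $L_\nu(\mu)$, not $L_\mu(\nu)$, as extremal. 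At best your construction has the two witnesses swapped; at worst (on higher-genus surfaces, where there are other smoothings $D_\xi$) it is not clear that a lamination deep inside the carrying cone of $\tau_L$ makes $L_\nu(\mu)$ beat every other $D_\xi$ either, since the bigon comparison you invoke only controls intersections with $\mu$ itself, not with a general $\lambda$. The iterative use of the Smoothing Lemma also requires attention to tautness after each smoothing, which you do not address.

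The paper's proof sidesteps all of this with a much more economical test valuation: it takes $v=v_\mu$, the valuation associated to $\mu$ itself. Then Luo's Theorem 2.1(iii) gives $i(\mu,L_\mu(\nu))=i(\mu,\nu)=i(\mu,L_\nu(\mu))$, while any other smoothing $D_\xi$ produces a bigon with $\mu$ and so has $i(\mu,D_\xi)<i(\mu,\nu)$. Thus $v_\mu$ attains its maximum on $t_\mu t_\nu$ exactly on the two multicurves $L_\mu(\nu)$ and $L_\nu(\mu)$ (which are distinct by Luo's Corollary 8.2, since their mutual intersection is $2i(\mu,\nu)>0$). The equality locus $\{\lambda : i(\lambda,L_\mu(\nu))=i(\lambda,L_\nu(\mu))\}$ is a codimension-one PL set (cited from the authors' earlier work), so perturbing $v_\mu$ slightly to either side breaks the tie in each direction, yielding $v$-extremality of both. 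This removes the need for any train-track recurrence analysis or local switch verification. If you want to salvage your route, the key missing idea is that you should choose $\lambda$ with \emph{high}, not low, intersection with the target multicurve; but even so, the paper's use of $v_\mu$ plus perturbation is shorter and avoids the technical obstacles you flag.
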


\begin{proof}
If $i(\mu,\nu)=0$ then $L_\mu(\nu)=\mu \cup \nu = L_\nu(\mu)$ and there is nothing more to say. 

Now suppose $i(\mu,\nu)>0$.
We first observe that among all smoothings of the union $\mu \cup \nu$, those which maximise $v_\mu$ are precisely $L_\mu(\nu)$ and $L_\nu(\mu)$. Indeed, we know from \cite[Theorem 2.1 (iii)]{Luo_simple_2010} that $i(\mu,L_\mu(\nu))=i(\mu,\nu)=i(\mu,L_\nu(\mu))$, but any other smoothing $\xi$ is made of segments of $\mu$ and $\nu$ which somewhere alternate between a left turn and right turn, thus forming a bigon with $\mu$ so that $i(\mu,\xi)<i(\mu,\nu)$.
The fact that $L_\mu(\nu) \ne L_\nu(\mu)$ can be obtained from \cite[Corollary 8.2]{Luo_simple_2010} which proves $i(L_\mu(\nu),L_\nu(\mu))=2 i (\mu,\nu)$.

We deduce from the multiplication formula \eqref{smoothing} that the distinct multicurves $L_\mu(\nu)$ and $L_\nu(\mu)$ both appear in the decomposition of $t_\mu t_\nu$ as the unique maximizers of $v_\mu$.
%
% Let $\lambda \in \ML$ belong to the open subset defined by the condition $v_\lambda(L_\kappa) < \max\{v_\lambda(L_\mu(\nu)), v_\lambda(L_\nu(\mu))\}$ for all multicurve $\kappa$ appearing in the product $t\mu t_\nu$ which is distinct from $L_\mu(\nu)$ and $L_\nu(\mu)$.
%
The condition $v_\lambda(L_\mu(\nu)) = v_\lambda(L_\nu(\mu))$ defines on $\lambda \in \ML$ a codimension-$1$ PL-subset (see \cite[Lemma 1.6]{MS_Aut(CV)_2020} for a proof).
Hence a slight perturbation of the valuation $v_\mu$ off that subset in one direction or the other shows that $L_\mu(\nu)$ and $L_\nu(\mu)$ are indeed extremal terms in the product.
\end{proof}

\begin{corollary}
Let $\mu$ and $\nu$ be multicurves such that $i(\mu,\nu)>0$. Then $L_\mu(\nu)$ and $L_\nu(\mu)$ are extremal terms for the Poisson bracket $\{t_\mu, t_\nu\}$ whose coefficients in the basis of multicurves equals $\pm i(\mu,\nu)$.
\end{corollary}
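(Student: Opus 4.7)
The plan is to compute the coefficient of $L_\mu(\nu)$ directly in the Kauffman algebra, via the formula
\[
\{t_\mu,t_\nu\}\ =\ (-1)^{|\mu|+|\nu|}\,\phi\!\Bigl(\tfrac{1}{2}\tfrac{d}{dA}\bigl[[\mu][\nu]-[\nu][\mu]\bigr]_{A=-1}\Bigr)
\]
derived from Theorem \ref{skein}, where $\phi\colon K(S,\C)\to\C[X]$ sends $[\xi]$ to $(-1)^{|\xi|}t_\xi$. Both $[\mu][\nu]$ and $[\nu][\mu]$ expand via the smoothing formula \eqref{smoothing} as a sum over states $\xi\colon C\to\{\pm 1\}$ of the $n=i(\mu,\nu)$ crossings of $\mu\cup\nu$, so the task is to identify which states contribute $L_\mu(\nu)$ and with which $A$-weights.

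The key input is already contained in the proof of Proposition \ref{Luo-smoothings-extremal}: only the two constant states on $C$ --- all ``left-turns from $\mu$'', respectively all ``right-turns from $\mu$'' --- achieve the maximal value $i(\mu,\nu)$ of $v_\mu$ among smoothings of $\mu\cup\nu$, and they yield the distinct multicurves $L_\mu(\nu)$ and $L_\nu(\mu)$ respectively. Since $v_\mu(L_\mu(\nu))=i(\mu,\nu)$ and trivial circles do not affect $v_\mu$, the only state producing $L_\mu(\nu)$ in $[\mu][\nu]$ is the constant left-from-$\mu$ state, and it does so with $n_\xi=0$ because $L_\mu(\nu)$ is itself a multicurve. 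In $[\mu][\nu]$, where $\mu$ is the over-strand at every crossing, this state is the $A$-resolution of the Kauffman relation and contributes $A^{n}$. In $[\nu][\mu]$ the over/under data at every intersection is reversed, so the same geometric smoothing becomes the $A^{-1}$-resolution and contributes $A^{-n}$.

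The coefficient of $L_\mu(\nu)$ in $[\mu][\nu]-[\nu][\mu]$ is therefore $A^{n}-A^{-n}$, whose half-derivative at $A=-1$ equals $n(-1)^{n-1}$. Composing with $\phi$ and the overall sign $(-1)^{|\mu|+|\nu|}$ yields $\pm i(\mu,\nu)$ for the coefficient of $t_{L_\mu(\nu)}$ in $\{t_\mu,t_\nu\}$; exchanging the roles of $\mu$ and $\nu$ gives the analogous statement for $L_\nu(\mu)$. Extremality in the bracket then comes for free: every multicurve in the support of $\{t_\mu,t_\nu\}$ is a smoothing of $\mu\cup\nu$, so $\Supp(\{t_\mu,t_\nu\})\subseteq\Supp(t_\mu t_\nu)$, and the simple valuation witnessing $L_\mu(\nu)\in\NewSet(t_\mu t_\nu)$ still witnesses its extremality in the bracket since its coefficient is nonzero.

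The main obstacle is the sign bookkeeping: one must commit to a convention for which geometric smoothing at a Kauffman crossing is the $A$-resolution, check that this choice makes the constant left-from-$\mu$ state contribute $A^{n}$ in $[\mu][\nu]$, and then verify that reversing the stacking order really toggles the $A$-exponent at every intersection. All the geometric content --- uniqueness of the constant state, the equality $v_\mu(L_\mu(\nu))=i(\mu,\nu)$, and the inclusion of supports --- is already supplied by Proposition \ref{Luo-smoothings-extremal} and the general properties of the smoothing expansion, so no further geometric input is needed beyond this diagrammatic verification.
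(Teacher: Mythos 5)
Your argument is correct and follows the route the paper itself signposts: the remark following Theorem \ref{skein} states that a smoothing extremal for $[\mu][\nu]$ remains extremal for the Poisson bracket with coefficient $\pm w_\xi$, and your Kauffman-algebra computation of $A^{n}-A^{-n}$ for the coefficient of $L_\mu(\nu)$ in $[\mu][\nu]-[\nu][\mu]$, together with the uniqueness of the constant state from the proof of Proposition \ref{Luo-smoothings-extremal}, is exactly this. One small caveat: the intermediate claim $\Supp(\{t_\mu,t_\nu\})\subseteq\Supp(t_\mu t_\nu)$ does not follow from both being contained in the set of smoothings of $\mu\cup\nu$ (cancellations in the product need not occur in the bracket), but it is also not needed --- the perturbed valuation $v$ from Proposition \ref{Luo-smoothings-extremal} strictly favors $L_\mu(\nu)$ over \emph{every} smoothing, and $\Supp(\{t_\mu,t_\nu\})$ lies inside that larger set, so extremality in the bracket follows directly once the coefficient is known to be nonzero.
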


\section{Residual Poisson structure on $\ML$}\label{sectionpoisson}

\subsection{Tangent space}
Recall that $\ML$ embeds in the space of real functions on $\C[X]^*$. We thus define its tangent space at $v$ as the set of maps $\phi=\frac{d}{dt}\big|_{t=0}v_t:\C[X]^*\to \R$, where $v_t$ is a family of simple valuations depending on a parameter $t\in [0,\epsilon[$ starting at $v_0=v$, such that the map $t\mapsto v_t(t_\gamma)$ is differentiable for every curve $\gamma$.

Observe that the pair $(v,\phi):\C[X]^*\to [0,+\infty)\times \R$ satisfies all the axioms in Definition \ref{defsimple} of simple valuations provided the maximum is taken with respect the lexicographic ordering. When $v$ is a strict valuation, the lexicographic ordering depends only on the first coordinate and everything becomes much easier. As we only deal with the strict case, we consider straight away the following as a definition.

\begin{definition}
Let $v\in \ML$ be a strict valuation. We define $T_v\ML$ to be the set of group morphisms $\phi:\C(X)^*\to \R$ satisfying for any function $f \in \C[X]$ decomposed as $f=\sum m_\mu t_\mu$ in the linear basis of multicurves:
\begin{equation}\label{stricto}
\phi(f)=\phi(t_{\nu})\text{ where }\nu\text{ is }v\text{-extremal in }f.
\end{equation}
\end{definition}

We can give equivalent descriptions, several of which serve in the sequel.

\begin{proposition}\label{isomorphisms}
For any strict valuation we have a sequence of natural isomorphisms:
$$T_v\ML=\Hom(\Lambda_v^+,\R)=\Hom(\Lambda_v,\R)=\Hom(\Lambda_v\otimes \Q,\R)=\Hom(\C(X)^*/\Oo_v^\times,\R)$$
where homomorphisms are understood first as semi-group morphisms and then as group morphisms. In particular, $T_v\ML$ has dimension $\ratrk(v)$ (which is $\le \dim X$).
\end{proposition}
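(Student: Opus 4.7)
The backbone of the argument is the bijection $\MC \to \Lambda_v^+$ given by $\mu \mapsto v(t_\mu)$, which exists precisely because $v$ is strict (Lemma \ref{strict-residual}). I would chain together explicit mutual inverses for each of the four equalities.

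The first and only nontrivial isomorphism $T_v\ML \cong \Hom(\Lambda_v^+, \R)$ I would construct as follows. Given $\phi \in T_v\ML$, set $\psi(v(t_\mu)) := \phi(t_\mu)$, which is well-defined by strictness. To see that $\psi$ is a semigroup morphism, I use that for multicurves $\mu, \nu$ the product $t_\mu t_\nu \in \C[X]$ has a unique $v$-extremal multicurve $\sigma$, which satisfies $v(t_\sigma) = v(t_\mu) + v(t_\nu)$ by axiom (2) of simple valuations. Multiplicativity of $\phi$ combined with Equation \eqref{stricto} then gives
$$\psi(v(t_\mu) + v(t_\nu)) = \psi(v(t_\sigma)) = \phi(t_\sigma) = \phi(t_\mu t_\nu) = \phi(t_\mu) + \phi(t_\nu).$$
Conversely, given $\psi$, define $\phi(f) := \psi(v(f))$ on $\C[X] \setminus \{0\}$. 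Axiom (2) of simple valuations makes $\phi$ multiplicative, axiom (3) shows it satisfies Equation \eqref{stricto} tautologically (since $v(f) = v(t_\nu)$ for the $v$-extremal $\nu$), and I extend to $\C(X)^*$ by $\phi(f/g) := \phi(f) - \phi(g)$, which is well-defined since multiplicativity forces compatibility with equivalent fractions. The two constructions are manifestly inverses.

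The remaining equalities are formal. $\Hom(\Lambda_v^+, \R) = \Hom(\Lambda_v, \R)$ because $\Lambda_v = \Lambda_v^+ - \Lambda_v^+$ is the universal group associated to the semigroup $\Lambda_v^+$, and $\R$ is a group. $\Hom(\Lambda_v, \R) = \Hom(\Lambda_v \otimes \Q, \R)$ because $\R$ is divisible and torsion-free, hence a $\Q$-vector space, so every group morphism factors uniquely through tensoring with $\Q$. Finally $\Hom(\Lambda_v, \R) = \Hom(\C(X)^*/\Oo_v^\times, \R)$ comes from the short exact sequence
$$1 \to \Oo_v^\times \to \C(X)^* \xrightarrow{v} \Lambda_v \to 0,$$
since $\Oo_v^\times = \{f : v(f) = 0\}$ is the kernel of the valuation map. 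The dimension computation is then immediate: $\dim_\R \Hom_\Q(\Lambda_v \otimes \Q, \R) = \dim_\Q(\Lambda_v \otimes \Q) = \ratrk(v)$, and this is bounded above by $\dim X$ by Abhyankar's inequality.

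\textbf{Main obstacle.} The only step requiring genuine input is verifying that $\psi$ is additive in the first isomorphism. This hinges on being able to \emph{reverse-engineer} a multicurve $\sigma$ from its length $v(t_\sigma) = v(t_\mu) + v(t_\nu)$, which is exactly the content of strictness. Everything else in the chain is a formal exact sequence, a tensor-Hom adjunction, or a $\Q$-linear dimension count.
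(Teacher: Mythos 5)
Your proof is correct and follows essentially the same route as the paper's: the same bijection $\mu\mapsto v(t_\mu)$, the same observation that the $v$-extremal multicurve $\sigma$ of $t_\mu t_\nu$ realizes $v(t_\sigma)=v(t_\mu)+v(t_\nu)$, and the same chain of formal identifications (universal group, divisibility of $\R$, the short exact sequence with kernel $\Oo_v^\times$). You merely spell out the inverse construction and the formal steps in more detail than the paper, which dispatches them as ``easily reversed'' and ``purely formal.''
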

\begin{proof}
Recall that the map $\mu\mapsto v(t_\mu)$ is a bijection between the set of multicurves and $\Lambda_v^+$. We have $v(t_\mu)+v(t_\nu)=v(t_\xi)$ where $\xi$ is the $v$-extremal multicurve in $t_\mu t_\nu$. Given $\phi\in T_v\ML$, the map $v(t_\mu)\mapsto \phi(t_\mu)$ gives by construction a morphism of semi-groups $\Lambda_v^+\to \R$ and this construction can easily be reversed, giving the isomorphism $T_v\ML=\Hom(\Lambda_v^+,\R)$. 
The remaining isomorphisms are purely formal, noticing that $\Oo_v^\times$ is the kernel of the group morphism $v:\C(X)^*\to \R$. \end{proof}

%In particular $T_v\ML$ has dimension $\ratrk(v)$, which is at most $\dim X$ by the Abhyankar inequality.

For any $f\in \C(X)$, we can consider the differential of the map $v\mapsto v(f)$ at $v$. We will denote it by $d_v\log f:T_v \ML\to \R$ and define it by $d_v\log f(\phi)=\phi(f)$. We introduced the ``$\log$'' in order to make the following formula look more natural:
$$d_v\log (fg)=d_v\log f +d_v\log g.$$
By Proposition \ref{isomorphisms}, the elements $d_v\log f$ span $T_v^*\ML$: we obtain a basis by letting $f$ range over a family of multicurves whose $v$-lengths form a basis of $\Lambda_v\otimes \Q$. 

\subsection{Residual Poisson structure}

\begin{proposition}\label{inegalitecrochet}
For all $f,g\in \C[X]$ and $v\in \ML$ we have: $v(\{f,g\})\le v(f)+v(g)$.
\end{proposition}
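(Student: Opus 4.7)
The plan is to reduce to the case of two multicurves $\mu, \nu$ by bilinearity, and then combine two ingredients for multicurves: a Kauffman-algebraic description of the support of $\{t_\mu, t_\nu\}$, and the geometric fact that smoothing intersections cannot increase the $\lambda$-length.

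First, decomposing $f = \sum_\mu m_\mu t_\mu$ and $g = \sum_\nu n_\nu t_\nu$ in the multicurve basis, bilinearity of the Poisson bracket combined with the ultrametric inequality for simple valuations (noted right after Definition \ref{defsimple}) yields
\[
v(\{f,g\}) \le \max_{m_\mu n_\nu \ne 0} v(\{t_\mu, t_\nu\}).
\]
Once we know $v(\{t_\mu, t_\nu\}) \le v(t_\mu) + v(t_\nu)$ for multicurves, the right-hand side is bounded by $\max_{m_\mu \ne 0} v(t_\mu) + \max_{n_\nu \ne 0} v(t_\nu) = v(f) + v(g)$, reducing the problem to the case of two multicurves.

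For multicurves $\mu, \nu$ in taut position, I would invoke Theorem \ref{skein} together with the smoothing formula \eqref{smoothing}: in $K(S, \C[A^{\pm 1}])$ we have decompositions $[\mu][\nu] = \sum_\xi P_\xi(A)[\xi]$ and $[\nu][\mu] = \sum_\xi P_\xi(A^{-1})[\xi]$, indexed by the same multicurves $\xi$ obtained by smoothing every intersection of $\mu \cup \nu$ and erasing the trivial components, with $P_\xi \in \Z[A^{\pm 1}]$. The substitution $A \leftrightarrow A^{-1}$ between the two expansions reflects the reversed signs of crossings in the two stacking orders. Then the formula $\{t_\mu, t_\nu\} = \frac{1}{2}\frac{d}{dA}\bigl[[\mu][\nu] - [\nu][\mu]\bigr]_{A=-1}$ from Theorem \ref{skein}(3) shows that $\{t_\mu, t_\nu\}$ is supported in the multicurve basis on this same set of smoothings of $\mu \cup \nu$.

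Finally, writing $v = v_\lambda$ for the associated measured lamination, it remains to establish the geometric bound $v(t_\xi) \le v(t_\mu) + v(t_\nu)$ for any such smoothing $\xi$. Representing $\mu$ and $\nu$ by curves taut with $\lambda$ (for example geodesic representatives), the diagram obtained from $\mu \cup \nu$ by smoothing all its intersections and discarding null-homotopic components has at most $|\mu \cap \lambda| + |\nu \cap \lambda| = i(\lambda, \mu) + i(\lambda, \nu)$ transverse intersections with $\lambda$, since these local operations take place away from $\lambda$; passing to the infimum over representatives of $\xi$ then yields the claim. The main obstacle is the support assertion of the previous paragraph: what makes it work is that $[\mu][\nu]$ and $[\nu][\mu]$ share the same index set $\{\xi\}$ in their multicurve expansions, so neither the commutator nor its first-order deformation at $A = -1$ can introduce multicurves outside the set of smoothings of $\mu \cup \nu$.
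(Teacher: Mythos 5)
Your proof is correct, and it reaches the key geometric ingredient by a different route than the paper. Both arguments begin with the same reduction to a pair of multicurves by bilinearity, but the paper then reduces further to curves via the Leibniz rule and invokes Goldman's formula \eqref{goldman}, so that the terms to bound are the single smoothings $t_{\mu_p\nu_p}-t_{\mu_p\nu_p^{-1}}$, controlled by the Smoothing Lemma \ref{smoothinglemma}; you instead bypass Leibniz and Goldman entirely by working in the Kauffman algebra, using Theorem \ref{skein}(3) and the smoothing formula \eqref{smoothing} to locate the support of $\{t_\mu,t_\nu\}$ inside the set of complete smoothings of $\mu\cup\nu$, and then estimate directly. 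In the end the geometric content is identical: smoothing a crossing (away from $\lambda$) cannot increase the transverse measure, which is exactly the ``$\geq$'' direction of the Smoothing Lemma. Your version has the virtue of staying at the multicurve level and making the support statement explicit --- which is in fact the observation the paper highlights after Theorem \ref{skein} and reuses for Theorem \ref{ThmB} --- at the cost of importing the skein machinery into a proof that can also be run with Goldman's formula alone. One small point of rigor: for a general measured lamination $\lambda$ the quantity $|\mu\cap\lambda|$ should be read as the transverse $\lambda$-measure of a taut representative rather than a cardinality, and the smoothing should be performed in disks of measure $o(1)$ around each crossing (possible since the transverse measure has no atoms off closed leaves, and a crossing can always be displaced off a closed leaf); with that reading your estimate $i(\lambda,\xi)\leq i(\lambda,\mu)+i(\lambda,\nu)$ goes through.
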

\begin{proof}
By linearity of the Poisson bracket, it is sufficient to prove the inequality for $f=t_\mu$ and $g=t_\nu$ where $\mu$ and $\nu$ are multicurves. Then, by the Leibnitz formula, it is sufficient to prove it for curves $\mu$ and $\nu$. Suppose that $\mu$ and $\nu$ are in taut position and apply Goldman's formula of Theorem \ref{goldman}. It is sufficient to prove that for any $p\in \mu\cap\nu$ we have $v(t_{\mu_p\nu_p}-t_{\mu_p\nu_p^{-1}})\le v(t_\mu t_\nu)$, but this is a consequence of the Smoothing Lemma \ref{smoothinglemma}. 
\end{proof}

Given a strict valuation $v\in \ML$, the preceding proposition allows us to define the residual Poisson bracket at $v$ in the following way. 

\begin{definition}
For $f,g\in \C[X]$ and $v\in \ML$ strict, we define $\{f,g\}_v\in k_v = \C$ by $$\{f,g\}_v=\frac{\{f,g\}}{fg} \mod \Mm_v.$$
\end{definition}

\begin{proposition}
There is an element $\pi_v\in \Lambda^2 T_v\ML$ representing this Poisson structure in the sense that for any $f,g\in \C[X]$ we have 
$$\{f,g\}_v=\langle \pi_v, d_v \log(f)\wedge d_v\log(g)\rangle.$$
\end{proposition}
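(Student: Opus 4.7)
The plan is to construct $\pi_v$ as the bivector dual to an alternating bilinear form on $T_v^*\ML = \Lambda_v\otimes\R$. Starting from the map $\omega\colon \C(X)^*\times \C(X)^*\to \C$ defined by $\omega(f,g)=\{f,g\}_v$, which is well-defined by Proposition \ref{inegalitecrochet}, I will show successively that $\omega$ is bi-additive and alternating, that it descends to $\Lambda_v\times\Lambda_v$, that it takes real values there, and finally that it extends uniquely to an $\R$-bilinear form on $T_v^*\ML$.

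Bi-additivity follows from the Leibniz rule: dividing $\{f_1f_2,g\}=f_1\{f_2,g\}+f_2\{f_1,g\}$ by $f_1f_2g$ yields $\omega(f_1f_2,g)=\omega(f_1,g)+\omega(f_2,g)$, and the analogous identity holds in the second argument. Alternation is immediate from $\{f,f\}=0$. The crux of the argument, and the step I expect to require the most care, is to show that $\omega$ vanishes whenever one argument is a unit $u\in\Oo_v^\times$, so that $\omega$ descends to $\Lambda_v\times\Lambda_v = (\C(X)^*/\Oo_v^\times)^2$. Writing $u=c(1+w)$ with $c\in\C^*$ and $w\in\Mm_v$, and using that constants are Poisson-central, one computes
\[
\omega(u,g)=\frac{\{w,g\}}{(1+w)g}\mod \Mm_v.
\]
Proposition \ref{inegalitecrochet} gives $v(\{w,g\})\le v(w)+v(g)<v(g)=v((1+w)g)$, so this quotient has strictly negative valuation and vanishes in $k_v$.

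Combining these properties, $\omega$ descends to an alternating $\Z$-bilinear pairing $\Lambda_v\times\Lambda_v\to \C$. Since $v$ is strict, every class in $\Lambda_v$ has a representative of the form $t_\mu/t_\nu$ with $\mu,\nu$ multicurves, hence with real (integer) coefficients; complex conjugation preserves the multicurve basis and therefore $\Oo_v$ and $\Mm_v$, descending to the standard conjugation on $k_v=\C$, so $\omega(t_\mu/t_\nu, t_{\mu'}/t_{\nu'})\in \R$. The pairing therefore takes values in $\R$. Finally, $\Lambda_v\otimes\Q$ is finite-dimensional over $\Q$ by Abhyankar's inequality, so $\omega$ extends uniquely to an alternating $\R$-bilinear form on $\Lambda_v\otimes\R=T_v^*\ML$, whose dual is the bivector $\pi_v\in\Lambda^2 T_v\ML$. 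Unwinding the identification $d_v\log f = v(f)\in\Lambda_v\subset T_v^*\ML$, the defining formula $\{f,g\}_v=\langle \pi_v, d_v\log f\wedge d_v\log g\rangle$ holds by construction.
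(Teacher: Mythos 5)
Your proof is correct and follows essentially the same route as the paper's: establish additivity via the Leibniz rule, show vanishing on $\Oo_v^\times$ using $v(\{w,g\})\le v(w)+v(g)<v(g)$, then dualize. The paper argues one variable at a time (fixing $f$ and showing $g\mapsto\{f,g\}_v$ factors through $d_v\log$, then invoking antisymmetry), whereas you treat both arguments symmetrically from the outset; the paper's decomposition $g=\alpha+h$ with $v(h)<0$ is the same as your $u=c(1+w)$ up to renaming. One small merit of your write-up is that you explicitly verify the pairing takes real values on $\Lambda_v\times\Lambda_v$ via the integrality of structure constants, a point the paper leaves implicit.
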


\begin{proof}
Let us fix $f$ and consider the map $\Psi:\C[X]\to \C$ defined by $\Psi(g)=\{f,g\}_v$. By the Leibnitz identity, this map satisfies $\Psi(g_1g_2)=\Psi(g_1)+\Psi(g_2)$ thus extends to an element of $\Hom(\C(X)^*,\C)$, and we must first show that it vanishes on $\Oo_v^\times$. Any $g\in \Oo_v^\times$ can be written $g=\alpha+h$ with $\alpha\in \C^*$ and $v(h)<0$. We compute 
$$\Psi(g)=\frac{\{f,\alpha+h\}}{f(\alpha+h)}=\frac{\{f,h\}}{f(\alpha+h)}.$$
Since $v(h)<0$ we have $v(f(\alpha+h))=v(f)+v(\alpha+h)=v(f)$ and with Proposition \ref{inegalitecrochet}, $v(\{f,h\})<v(f)$ thus $\Psi(g)\in \Mm_v$ and the claim is proved. 
What we have shown implies that there exists an element $\psi_f\in T_v\ML$ such that $\Psi(g)=\langle \psi_f,d_v \log g\rangle$. As the Poisson bracket is antisymmetric, the same is true with the variables interchanged, and the conclusion follows.
\end{proof}

\section{Actions of $\pi_1(S)$ on real trees}
\label{sectiontrees}

%\subsection{General facts}

A real tree is a metric space $T$ such that any two points $x,y\in T$ are joined by a unique injective segment, and this segment is geodesic.
% Ou bien:
%A real tree is a metric space $T$ such that: any two points $x,y\in T$ are connected by a unique geodesic segment, and the union of two geodesic segments intersecting in one (extremity) point, is a geodesic segment.
%
% Recall that a real tree is a metric space $T$ such that for any $x,y\in T$, there exists a unique arc joining $x$ to $y$, and such that this arc is a geodesic segment.
We consider real trees on which $\pi_1(S)$ acts in a minimal way, meaning without any invariant proper subtree.

The action of an element $\alpha\in \pi_1(S)$ on $T$ either fixes a point and is called elliptic; otherwise it is a hyperbolic translation along an axis $A_\alpha$ with positive translation length $l(\alpha)=\min\{d(x,\alpha x)\mid x\in T\}$, and $d(x,\alpha x)=l(\alpha)$ if and only if $x\in A_\alpha$. 

We face the following alternative. If all of $\pi_1(S)$ acts elliptically, then it has a global fixed point; the minimality assumption implies that $T$ is reduced to a point. If at least one element of $\pi_1(S)$ acts hyperbolically, then the union of all translation axes forms an invariant subtree, which equals $T$ by the minimality assumption.

An action of $\pi_1(S)$ is free if only the trivial element of $\pi_1(S)$ has a fixed point, or equivalently if $l(\alpha)>0$ for all non-trivial $\alpha\in \pi_1(S)$. It is small if the stabilizer of any non-trivial segment in $T$ is cyclic. This condition appears naturally in the following important results. 

\begin{theorem}[Culler-Morgan]
If $T_1,T_2$ are two real trees with a small and minimal action of $\pi_1(S)$, then there is an equivariant isometry $\Phi:T_1\to T_2$ if and only if $l_1(\alpha)=l_2(\alpha)$ for all $\alpha\in \pi_1(S)$.
\end{theorem}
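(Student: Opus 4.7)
The ``only if'' direction is immediate, since any equivariant isometry preserves the minimal displacement of every element. My plan for the converse is to reconstruct $\Phi$ axis by axis, exploiting that every point of $T_i$ lies on some hyperbolic translation axis by minimality.

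First I would dispose of the trivial case: if no element of $\pi_1(S)$ acts hyperbolically on either tree, minimality forces both $T_i$ to be reduced to a single point, and there is nothing to prove. Otherwise, fix $\alpha_0 \in \pi_1(S)$ with $l(\alpha_0) > 0$, choose basepoints $x_i \in A_{\alpha_0}^i$, and pick an orientation-preserving isometry $\phi_0 : A_{\alpha_0}^1 \to A_{\alpha_0}^2$ with $\phi_0(x_1)=x_2$. Conjugation transports $\phi_0$ to every axis of a conjugate of $\alpha_0$. The plan is to produce, for every hyperbolic $\beta$, an orientation-preserving isometry $\phi_\beta : A_\beta^1 \to A_\beta^2$, and then to set $\Phi(y)=\phi_\beta(y)$ for $y \in A_\beta^1$; by minimality $T_1 = \bigcup_\beta A_\beta^1$, so this defines $\Phi$ everywhere provided it is consistent on overlaps.

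The core input is that the translation length function determines the relative geometry of any pair of axes. Concretely, for hyperbolic $\beta,\gamma$, the quadruple $l(\beta), l(\gamma), l(\beta\gamma), l(\beta\gamma^{-1})$ distinguishes the configurations and recovers all relevant metric data: disjoint axes, with $l(\beta\gamma) = l(\beta)+l(\gamma) + 2\, d(A_\beta,A_\gamma)$; axes meeting in a single point, characterised by $l(\beta\gamma) = l(\beta)+l(\gamma)$ and $l(\beta\gamma^{-1}) = |l(\beta)-l(\gamma)|$ (or the reverse, depending on orientation); axes sharing a nontrivial segment whose length is read off the difference $l(\beta\gamma^{-1}) - (l(\beta)+l(\gamma))$; axes that coincide, forcing commensurability which is controlled by smallness. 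Using this dictionary, I fix $\phi_\beta$ by transporting the basepoint: the orthogonal projection $\pi_{A_\beta^i}(x_i)$ and its position along $A_\beta^i$ are determined by $l(\alpha_0), l(\beta), l(\alpha_0\beta), l(\alpha_0\beta^{-1})$, so $\phi_\beta$ is the unique orientation-preserving isometry sending $\pi_{A_\beta^1}(x_1)$ to $\pi_{A_\beta^2}(x_2)$.

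The main obstacle will be proving well-definedness, i.e.\ that $\phi_\beta(y)=\phi_\gamma(y)$ whenever $y \in A_\beta^1 \cap A_\gamma^1$. This reduces to showing that on every shared subsegment the two isometries agree and do not differ by a reflection; here the smallness hypothesis is essential, as it forbids a nontrivial element from reversing orientation on a shared arc and rules out anomalous commensurability phenomena in the ``coincident axes'' case. Once consistency is established, equivariance of $\Phi$ is built in by the $\phi_{g\beta g^{-1}} = g\phi_\beta g^{-1}$ rule, and the global isometry property follows because distances between arbitrary points on (possibly distinct) axes are reconstructed from the length function via bridges and projections, hence match in $T_1$ and $T_2$.
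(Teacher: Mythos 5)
The paper cites this theorem (Culler--Morgan, ``Group actions on $\mathbb{R}$-trees'') without giving a proof, so there is no paper argument to compare against; here is an assessment of your sketch on its own terms.

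Your overall plan---pin each axis down using the length function and glue the isometry axis by axis---is a plausible heuristic, but the dictionary you state between the quadruple $l(\beta), l(\gamma), l(\beta\gamma), l(\beta\gamma^{-1})$ and the relative configuration of $A_\beta, A_\gamma$ is wrong in a key case. When $A_\beta\cap A_\gamma$ is a single point ($D=0$), the proposition the paper recalls from Conder--Paulin gives $l(\beta\gamma)=l(\beta)+l(\gamma)$; and since replacing $\gamma$ by $\gamma^{-1}$ leaves the axis $A_\gamma$ and hence the intersection unchanged, one also gets $l(\beta\gamma^{-1})=l(\beta)+l(\gamma)$. The pattern you quote for a point intersection, namely $l(\beta\gamma)=l(\beta)+l(\gamma)$ together with $l(\beta\gamma^{-1})=|l(\beta)-l(\gamma)|$, is instead the signature of coincident axes with agreeing translation directions. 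Similarly $l(\beta\gamma^{-1})-(l(\beta)+l(\gamma))$ is nonpositive, so the overlap length cannot literally be ``read off'' this quantity; the lemma as stated only gives the strict inequality $l(\gamma\delta)<l(\gamma)+l(\delta)$ in the opposite-direction case, with no closed formula for the overlap.

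More seriously, the step you flag as ``the main obstacle''---well-definedness of $\Phi$ on axis overlaps and consistency of distances across bridges---is not a finishing technicality but the substance of the theorem. Without smallness the statement is false precisely because these pointwise choices need not be globally compatible, and Culler and Morgan's actual proof does not proceed axis by axis: it analyses based length functions and when the unbased length function determines them, with smallness entering to control segment stabilizers (cyclicity rules out the pathological alignments that would otherwise spoil reconstruction). Your sketch correctly identifies where smallness must act but gives no argument for consistency, so what is explicit contains an error and what is central is left as a plan rather than a proof.
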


\begin{theorem}
To any measured lamination $\lambda$ in $\ML$ one can associate a ``dual tree" $T_\lambda$ together with a small minimal action of $\pi_1(S)$ on $T_\lambda$. Moreover (Skora theorem), any tree with a small and minimal action of $\pi_1(S)$ is produced in this way. 
\end{theorem}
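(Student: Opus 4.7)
The theorem combines two classical results, which I would handle in sequence.

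\textbf{Construction of $T_\lambda$.} Fix an auxiliary hyperbolic structure on $S$, represent $\lambda$ as a geodesic measured lamination, and lift it to a $\pi_1(S)$-invariant $\tilde\lambda$ on $\tilde S \simeq \H^2$. Define a pseudo-distance on $\tilde S$ by letting $\tilde d(x,y)$ be the $\tilde\lambda$-transverse measure of an efficient (taut) arc from $x$ to $y$. The first check is that $\tilde d$ is well-defined: any two efficient arcs between $x$ and $y$ cobound a disk whose interior meets $\tilde\lambda$ in local leaves crossing from one boundary arc to the other, and a standard decomposition of this configuration shows that the two transverse masses agree. The dual tree is then $T_\lambda := \tilde S / \{\tilde d = 0\}$ endowed with the induced metric; since $\tilde\lambda$ is equivariant, $\pi_1(S)$ acts by isometries on $T_\lambda$.

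\textbf{Tree axioms, minimality, smallness.} The key verification is the four-point $0$-hyperbolicity condition characterising real trees. For $x_1,\ldots,x_4 \in \tilde S$, taut arcs between them give a finite planar configuration of crossings with $\tilde\lambda$ from which the tripod condition can be read off combinatorially; this simultaneously yields existence and uniqueness of injective segments, completing the tree axioms. Minimality is immediate since every hyperbolic element of $\pi_1(S)$ has an axis in $T_\lambda$ and the union of axes forms a non-empty invariant subtree. For smallness, the stabiliser of a non-degenerate arc in $T_\lambda$ lifts to a subgroup fixing a segment of $\tilde S$ transverse to $\tilde\lambda$; such a subgroup is contained in the stabiliser of a leaf or of a complementary region of $\tilde\lambda$, which is cyclic by classical facts on laminations on surfaces.

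\textbf{Skora's converse.} Given a small minimal action of $\pi_1(S)$ on a real tree $T$, I would construct an equivariant continuous map $\phi\colon \tilde S \to T$ by triangulating $S$, choosing an equivariant lift of the vertices to arbitrary points in $T$, extending along edges to geodesic segments, and coning over $2$-cells. Pulling back distance along $\phi$ gives a $\pi_1(S)$-invariant pseudo-distance on $\tilde S$ whose level decomposition produces a measured foliation; after iteratively \emph{tightening} $\phi$ to eliminate folds, this foliation straightens to a geodesic measured lamination $\mu$. The identification $T_\mu \cong T$ then follows from the Culler--Morgan theorem above by comparing translation length functions for each $\alpha \in \pi_1(S)$. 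The main obstacle is the tightening step: ensuring that the limiting object is a genuine measured lamination, rather than a measured foliation with one-sided saddle connections, requires the smallness assumption together with the full strength of the Rips--Bestvina--Feighn--Paulin machinery for actions of finitely generated groups on real trees, and constitutes the technical heart of Skora's theorem.
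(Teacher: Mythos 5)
The paper does not actually prove this theorem: it is quoted as classical background (Morgan--Shalen for the direct implication, Skora for the converse), and only the construction of the dual tree is sketched, in the filling case, following Otal. There, $T_\lambda$ is defined as the quotient of $\tilde S$ by the equivalence relation whose classes are closures of complementary regions of $\tilde\lambda$, together with the residual leaves. Your pseudo-metric formulation --- $\tilde d(x,y)=$ transverse $\tilde\lambda$-mass of a taut arc from $x$ to $y$, then quotient by $\tilde d=0$ --- is the Morgan--Shalen version of the same construction, and the two are equivalent; so for the direct implication you take a genuinely parallel route presented in a different language. For Skora's converse the paper gives nothing, and your outline (equivariant resolution $\phi\colon\tilde S\to T$, pullback pseudo-distance, tightening to remove folds, then identification via Culler--Morgan) is the standard strategy; you are right that the unfolding step is the actual technical content and relies on the Rips/Bestvina--Feighn/Paulin machinery, which one cannot honestly compress further.

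Two small cautions. First, your minimality paragraph is not yet an argument: ``the union of axes forms a non-empty invariant subtree'' is a general fact about any isometric action with a hyperbolic element, but it does not by itself show that the $T_\lambda$ you built \emph{coincides} with that union, i.e.\ that there is no proper invariant subtree. You would need to observe, for instance, that for a filling lamination every point of $T_\lambda$ lies on the axis of some element (equivalently, that taut geodesics of $\H^2$ whose images are periodic are dense), or invoke the density of poles of hyperbolic elements in $\partial T$. Second, the theorem is stated for an arbitrary $\lambda\in\ML$, while both the paper's sketch and your argument are really written for the filling case; when $\lambda$ has closed leaves or non-filling support, the complementary pieces are not disks, some stabilizers of segments are the $\Z$'s coming from closed leaves or subsurface groups degenerating to edge stabilizers, and both the construction and the minimality/smallness checks need to be adapted. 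Flagging that reduction explicitly (cut along closed leaves, handle minimal components separately) would make the sketch complete.
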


Let us briefly outline the construction of the dual tree to a measured lamination, in the case where $\lambda$ is filling (or equivalently when the simple valuation $v_\lambda$ is positive).

First represent the filling measured lamination $\lambda$ on $S$ by a measured geodesic lamination for some fixed hyperbolic metric, and lift it in $\tilde{S}$ to obtain a $\pi_1(S)$-invariant measured geodesic lamination $\tilde{\lambda}$. Following \cite[Section 2.3]{Otal_hyperbolization}, the tree $T_\lambda$ is the quotient of $\tilde{S}$ by the equivalence relation whose classes are given either by the closure of a connected component of $\tilde{S}\setminus \tilde{\lambda}$ or by a leaf of $\tilde{\lambda}$ which is not contained in the previous classes. The quotient map $f\colon \tilde{S}\to T_\lambda$ is clearly $\pi_1(S)$-equivariant.

To describe the complement of a point $x\in T_\lambda$, consider its preimage $f^{-1}(\{x\})$. If it consists in a geodesic leaf of $\lambda$, then $T_\lambda\setminus \{x\}$ has $2$ connected components. Otherwise it is isometric to the closure of an ideal hyperbolic polygon with $k$-sides, so $T_\lambda\setminus \{x\}$ has $k>2$ components, and $x$ is called a \emph{branch point} of $T$.
In any case the connected components of $T_\lambda\setminus\{x\}$ have a cyclic orientation which is $\pi_1(S)$-invariant.
These local cyclic orientations match together to give a global cyclic orientation on the Gromov boundary of $T_\lambda$. See \cite{Wolff_compactification_2011} for more details.

The map $f:\tilde{S}\to T_\lambda$ is not proper so does not extend to the Gromov boundary.
A non-trivial element $\alpha\in \pi_1(S)$ acts on $\tilde{S}\simeq \H^2$ by hyperbolic translation along an axis which is transverse to $\lambda$ and thus crosses every leaf at most once. Hence the projection $f$ maps it bijectively to a geodesic in $T$ which, by equivariance, coincides with the axis $A_\alpha$. Hence we can associate to the attractive and repulsive points of $\alpha$ in $\partial \H^2=\partial \pi_1(S)$ the corresponding end points of $A_\alpha$ in $\partial T$. This partially defined map between the Gromov boundaries of $\pi_1(S)$ and $T$ is $\pi_1(S)$-equivariant, orientation preserving and independent of the initial hyperbolic metric.

We recall the following proposition from \cite{Conder-Paulin_Erratum-Gromov-R-trees_2020} that we will use repeatedly.
\begin{proposition}\label{Prop_Paulin}
Let $\gamma,\delta$ be two hyperbolic isometries acting on a real tree $T$ with axes $A_\gamma$ and $A_\delta$. Then one of the following hold.
\begin{enumerate}
    \item If $A_\gamma\cap A_\delta=\emptyset$ then $l(\gamma\delta)=l(\gamma)+l(\delta)+2D$ where $D$ is the distance between $A_\gamma$ and $A_\delta$. 
    \item If $A_\gamma\cap A_\delta\ne \emptyset$, we denote by $D\in [0,+\infty]$ the length of the intersection. 
    \begin{enumerate}
        \item[(i)] If $D>0$ and the translation directions of $\gamma$ and $\delta$ on $A_\gamma\cap A_\delta$ coincide, or if $D=0$ then $l(\gamma\delta)=l(\gamma)+l(\delta)$.
        \item[(ii)] if $D>0$ and the translation directions of $\gamma$ and $\delta$ on $A_\gamma\cap A_\delta$ are opposite, then $l(\gamma\delta)<l(\gamma)+l(\delta)$.
    \end{enumerate}
\end{enumerate}
\end{proposition}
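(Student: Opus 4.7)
The plan is to exploit two elementary properties of real trees: the displacement formula $d(x, gx) = l(g) + 2 d(x, A_g)$ valid for every hyperbolic isometry $g$ and any point $x$, and the projection formula asserting that for any geodesic line $A$ and points $y_1, y_2$ with distinct projections $\pi_A(y_1) \ne \pi_A(y_2)$, the geodesic $[y_1, y_2]$ passes through $A$ along $[\pi_A(y_1), \pi_A(y_2)]$. Combined, these will let us compute $d(x, \gamma\delta x)$ for well-chosen test points $x$ and then upgrade the result to the true translation length $l(\gamma\delta)$ by identifying an invariant geodesic line for $\gamma\delta$.

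\textbf{Case 1 (disjoint axes).} Let $p \in A_\gamma$ and $q \in A_\delta$ realize the distance $D$ between the axes, and take $x = q$. Since $A_\delta$ projects onto $A_\gamma$ at the single point $p$, one has $\pi_{A_\gamma}(\delta q) = p$, and since $\gamma$ commutes with projection onto $A_\gamma$, also $\pi_{A_\gamma}(\gamma \delta q) = \gamma p$. As $\gamma p \ne p$, the projection formula gives
$$d(q, \gamma \delta q) = d(q, p) + d(p, \gamma p) + d(\gamma p, \gamma \delta q) = D + l(\gamma) + (l(\delta) + D),$$
yielding the upper bound $l(\gamma\delta) \le l(\gamma) + l(\delta) + 2D$. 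Iterating the same projection analysis, the segments $[(\gamma\delta)^n q, (\gamma\delta)^{n+1} q]$ concatenate without backtracking into a geodesic line $L$ preserved by $\gamma\delta$; hence $L$ is the axis of $\gamma\delta$ and the displacement along it is exactly $l(\gamma) + l(\delta) + 2D$.

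\textbf{Case 2 (intersecting axes).} When the axes meet in a segment $I$ of length $D$, pick $x$ to be an endpoint of $I$ chosen with respect to the orientation of $\delta$ along $I$ (or the unique intersection point if $D = 0$). In subcase (i), where $\gamma$ and $\delta$ translate in the same direction along $I$ (or $D = 0$, making the issue vacuous), the broken line $x \to \delta x \to \gamma \delta x$ first advances along $A_\delta$ in the common direction, then exits $I$ and continues along $A_\gamma$, forming a genuine geodesic of length $l(\delta) + l(\gamma)$. Extending this by iteration produces a $\gamma\delta$-invariant line, so $l(\gamma\delta) = l(\gamma) + l(\delta)$. In subcase (ii), the opposing directions along $I$ force the path $x \to \delta x \to \gamma \delta x$ to backtrack on a non-trivial portion of the overlap, so every test point has displacement strictly less than $l(\gamma) + l(\delta)$ and therefore $l(\gamma\delta) < l(\gamma) + l(\delta)$.

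The main obstacle is verifying that each of the proposed paths is a genuine geodesic, that is, that no segment cancels with the next at a junction between pieces of $A_\gamma$, $A_\delta$, or the bridge $[p,q]$. This is exactly where the tree structure intervenes: bridges between disjoint subtrees of a real tree attach ``orthogonally'', which is what makes the projection points line up with the endpoints of translation segments and forces consecutive pieces to peel away into different branches. A secondary technicality is upgrading a pointwise displacement equality $d(x, \gamma\delta x) = L$ into $l(\gamma\delta) = L$; we handle this uniformly by exhibiting the $\gamma\delta$-invariant geodesic line $\bigcup_n (\gamma\delta)^n [x, \gamma\delta x]$, which must coincide with the axis of $\gamma\delta$.
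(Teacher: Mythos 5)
The paper does not prove this proposition; it is imported verbatim from Conder--Paulin (cited as \cite{Conder-Paulin_Erratum-Gromov-R-trees_2020}), so there is no internal argument to compare against. What you have written is a self-contained direct proof using the two standard real-tree tools (the displacement formula $d(x,gx)=l(g)+2d(x,A_g)$ and the nearest-point projection formula), which is precisely how this statement is established in the literature going back to Culler--Morgan and Paulin. So the approach is right and, in outline, the argument is sound.

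A few places where the write-up needs tightening before it is a complete proof. First, the ``no-backtracking'' verification for the concatenation $\bigcup_n(\gamma\delta)^n[x,\gamma\delta x]$, which you correctly identify as the main issue, is not actually carried out: at the junction $\gamma\delta x$ one must check that the incoming germ (from $[x,\gamma\delta x]$) and the outgoing germ (toward $(\gamma\delta)^2 x$) are distinct, and in Case~2(i) this splits into subcases according to whether $l(\delta)\le D$ or $l(\delta)>D$; the check goes through but is less immediate than the ``bridges attach orthogonally'' heuristic suggests, because both germs can lie on the same translate of $A_\delta$ and one must compare orientations. Second, Case~2(ii) is asserted rather than proved: the phrase ``every test point has displacement strictly less than $l(\gamma)+l(\delta)$'' is not needed (and is awkward, since what one wants is an \emph{upper} bound on $l(\gamma\delta)=\min_x d(x,\gamma\delta x)$), and the claim that the broken path $x\to\delta x\to\gamma\delta x$ backtracks does not by itself bound $d(x,\gamma\delta x)$ from above---the broken path is longer than the geodesic, not shorter. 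The clean way to finish is to take $x=a$ (the endpoint of $I=A_\gamma\cap A_\delta$ into which $\delta$ translates), compute $d(a,\gamma\delta a)$ directly via the projection formula in the two regimes $l(\delta)\le D$ and $l(\delta)>D$, and observe that the result is in every case $<l(\gamma)+l(\delta)$; since $l(\gamma\delta)\le d(a,\gamma\delta a)$, the strict inequality follows. Finally, note that in Case~2(ii) the product $\gamma\delta$ may be elliptic (e.g.\ when $A_\gamma=A_\delta$ and $\gamma=\delta^{-1}$), so one should not speak of ``the axis of $\gamma\delta$'' there; the inequality $l(\gamma\delta)<l(\gamma)+l(\delta)$ still holds with $l(\gamma\delta)=0$.
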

As a first corollary of this proposition, we obtain a proof of the smoothing lemma. 
\begin{proof}[Proof of the Smoothing Lemma \ref{smoothinglemma}]
Let us represent our measured lamination $\lambda$ by an action of $\pi_1(S)$ on a tree $T$. Consider a multiloop $\alpha$ with a self-intersection $p$: we wish to prove that $v(t_\alpha)=\max(v(t_{\alpha_+}),v(t_{\alpha_-}))$. Again, we consider two cases.

In the first case, $\alpha$ consists in a single component that we can write $\alpha=\gamma\delta$ in the fundamental group based at $p$. Then after inspection of all possibilities in Proposition \ref{Prop_Paulin}, we see that $l(\gamma\delta)=\max(l(\gamma)+l(\delta),l(\gamma\delta^{-1}))$, as desired.

In the second case, $\alpha=\gamma\cup\delta$ and we must prove $l(\gamma)+l(\delta)=\max(l(\gamma\delta),l(\gamma\delta^{-1}))$. This holds if and only if the axes of $\gamma$ and $\delta$ intersect (Case 2 of Proposition \ref{Prop_Paulin}). 

Base the fundamental group at $p\in \gamma\cap \delta$ and fix a hyperbolic metric on $S$. Now lift $\gamma$ and $\delta$ in $\tilde{S}$ starting from $p$, to obtain quasi-geodesic curves $\tilde{\gamma}$ and $\tilde{\delta}$ in $\tilde{S}\simeq \H^2$ intersecting transversely at $\tilde{p}$.
Since $\gamma \cup \delta$ is taut, 
% there is no weak bigon between them so
their lifts $\tilde{\gamma}$ and $\tilde{\delta}$ intersect only at $p$ (\cite[see Definition 8 to Lemma 10]{Dylan_Thurston_intersection_2009}). Their end-points should therefore be linked in $\partial\H^2$ with respect to the cyclic ordering. As the same holds for the end-points of $A_\gamma$ and $A_\delta$ in $\partial T$, these axes must also intersect, and the lemma is proved.
% and lift $\gamma$ and $\delta$ to $\tilde{S}$ starting from $\tilde{p}$. We thus obtain two quasi-geodesics $\tilde{\gamma}$ and $\tilde{\delta}$ intersecting transversally at $\tilde{p}$ and mapping to $A_\gamma$ and $A_\delta$ respectively. This shows that these axes do intersect at $f(\tilde{p})$, and the lemma is proved.
\end{proof}

\subsection{Trivalent real trees}
Recall that a point $x$ in a real tree is a \emph{branch point} if $T\setminus\{x\}$ has at least three connected components. We will denote by $V(T)$ the set of branch points of $T$. A real tree is \emph{trivalent} if any branch point disconnects it in three connected components.
\begin{proposition}\label{maxacutriv}
Let $T$ be a real tree with a free minimal action of $\pi_1(S)$ associated with a filling measured lamination $\lambda$ and denote by $v$ the associated positive valuation. 
The following are equivalent:
\begin{enumerate}
\item $v$ is acute
\item $T$ is trivalent
\item $\lambda$ is maximal 
\end{enumerate}
\end{proposition}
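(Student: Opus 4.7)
My plan is to separate the three equivalences into (2)$\iff$(3), which is purely topological and follows directly from the construction of $T = T_\lambda$, and (1)$\iff$(2), which combines Lemma~\ref{uniquesmoothing} with Proposition~\ref{Prop_Paulin}. The equivalence (2)$\iff$(3) follows from the explicit description recalled above: branch points of $T$ correspond bijectively to $\pi_1(S)$-orbits of lifts of complementary regions of $\lambda$, and the valence of each branch point equals the number of sides of the corresponding ideal polygon. Hence $T$ is trivalent precisely when every complementary region of $\lambda$ is an ideal triangle, which is the definition of maximality.

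For (2)$\implies$(1), I will verify the criterion of Lemma~\ref{uniquesmoothing}. Given a taut multiloop $\alpha$ with self-intersection at $p$, write either $\alpha = \gamma\delta$ as a single loop (Case~A) or $\alpha = \gamma \cup \delta$ as two components (Case~B) with $\gamma,\delta \in \pi_1(S,p)$ nontrivial. Reproducing the tautness argument from the proof of the Smoothing Lemma, the lifts of $\gamma$ and $\delta$ at $\tilde p$ have linked endpoints in $\partial\H^2$, so the axes $A_\gamma, A_\delta$ have linked endpoints in $\partial T$ and therefore intersect in $T$. Since $T$ is trivalent, every point has valence at most $3$; hence two intersecting geodesics must share a direction at any common point and overlap on a segment of positive length $D > 0$. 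Proposition~\ref{Prop_Paulin} case~2 then forces exactly one of $l(\gamma\delta)$ and $l(\gamma\delta^{-1})$ to equal $l(\gamma)+l(\delta)$ with the other being strictly smaller. In Case~B the smoothings are $\gamma\delta$ and $\gamma\delta^{-1}$ and have distinct lengths. In Case~A the smoothings are $\gamma\delta^{-1}$ and $\gamma \cup \delta$ of lengths $l(\gamma\delta^{-1})$ and $l(\gamma)+l(\delta)$; applying the Smoothing Lemma to $\alpha = \gamma\delta$ pins us in the ``same direction'' configuration, which forces $l(\gamma\delta^{-1}) < l(\gamma)+l(\delta)$ as desired. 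Either way $v$ is acute.

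For (1)$\implies$(2), I argue contrapositively. Assume $T$ has a branch point $x$ of valence $k \geq 4$, corresponding via the quotient construction to a lift $\tilde R \subset \tilde S$ of a complementary polygon of $\lambda$ with $k \geq 4$ sides. Fix four distinct sides $s_1, s_2, s_3, s_4$ arranged cyclically around $\tilde R$. Since the set of pairs of fixed points of hyperbolic elements of $\pi_1(S)$ is dense in $(\partial\H^2)^2 \setminus \Delta$, a standard fact for cocompact Fuchsian actions, I can find $\gamma, \delta \in \pi_1(S)$ whose axes in $\tilde S$ cross $\tilde R$ through $\{s_1, s_3\}$ and $\{s_2, s_4\}$ respectively. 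Their projections are axes $A_\gamma, A_\delta$ in $T$ both passing through $x$ but using four \emph{distinct} directions, so convexity of the intersection as a subset of the real tree forces $A_\gamma \cap A_\delta = \{x\}$ and $D = 0$. Proposition~\ref{Prop_Paulin} case~2(i) then yields $l(\gamma\delta) = l(\gamma\delta^{-1}) = l(\gamma)+l(\delta)$, that is $v(t_{\gamma\delta}) = v(t_{\gamma\delta^{-1}}) = v(t_\gamma t_\delta)$, contradicting Definition~\ref{acute}.

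The main obstacle I anticipate lies in the geometric construction in (1)$\implies$(2): ensuring that a lifted axis in $\tilde S$ crossing $\tilde R$ through prescribed sides really projects to an axis in $T$ hitting $x$ along the corresponding directions, and that two such axes built from disjoint pairs of sides intersect only at $x$ in $T$. This requires a careful analysis of the partial equivariant map $\partial\H^2 \dashrightarrow \partial T$ induced by the quotient $\tilde S \to T_\lambda$, beyond the informal sketch given in the introduction to this section.
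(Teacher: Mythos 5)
Your proof is essentially correct and uses the same core mechanism as the paper, namely Proposition~\ref{Prop_Paulin} combined with the observation that trivalence of $T$ forbids two axes from meeting in exactly one point ($D=0$), while a branch point of valence $\ge 4$ permits it. The equivalence $(2)\iff(3)$ is identical. The differences are in the packaging of $(1)\iff(2)$.

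For $(2)\Rightarrow(1)$, you verify the taut-multiloop criterion of Lemma~\ref{uniquesmoothing}, which forces you into the two-case analysis and a re-use of the tautness argument from the Smoothing Lemma. The paper instead checks Definition~\ref{acute} directly: for any nontrivial $\alpha,\beta$, Proposition~\ref{Prop_Paulin} shows $l(\alpha\beta)=l(\alpha\beta^{-1})=l(\alpha)+l(\beta)$ can only happen when $D=0$, which trivalence excludes. This is noticeably shorter and avoids discussing taut position altogether. Your route is valid (and even your appeal to the Smoothing Lemma to rule out the ``opposite direction'' configuration in Case~A is correct), but it is a detour.

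For $(1)\Rightarrow(2)$, you build the two axes upstairs in $\tilde S\simeq\H^2$, using density of axis endpoint pairs for a cocompact Fuchsian group, and then push forward along $f\colon\tilde S\to T$. The paper instead works directly in $T$: it observes that endpoint pairs of translation axes are dense in $\partial T\times\partial T$ (justified by a short ping-pong argument with elements $\alpha\beta^n$ and minimality of the action), so one can pick axes with ends in $\partial C_1\times\partial C_3$ and $\partial C_2\times\partial C_4$; such axes automatically pass through $x$ in disjoint directions. This sidesteps exactly the ``main obstacle'' you flag at the end, namely controlling the partial boundary map $\partial\H^2\dashrightarrow\partial T$ and verifying that axes in $\tilde S$ crossing a prescribed pair of sides of $\tilde R$ project to axes in $T$ through the corresponding half-edges. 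That step can be made rigorous (the axis of a hyperbolic element is transverse to $\tilde\lambda$ and hence projects to its translation axis in $T$, and its crossing pattern through $\tilde R$ determines the half-edges used at $x$), so there is no gap, but the paper's in-tree density argument is cleaner and worth adopting.
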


A measured geodesic lamination $\lambda$ is called maximal if there is no measured geodesic lamination whose support is strictly bigger; or equivalently if the regions in its complement $S\setminus \lambda$ are isometric to the interiors of ideal hyperbolic triangles.
\begin{proof}
$1\iff 2$. Suppose $T$ is trivalent. Let $\alpha,\beta$ be non-trivial elements in $\pi_1(S)$ and consider their translation axes $A_\alpha,A_\beta\subset T$. From Proposition \ref{Prop_Paulin}, we find that $l(\alpha\beta)=l(\alpha\beta^{-1})=l(\alpha)+l(\beta)$ holds only when $A_\alpha$ and $A_\beta$ meet in exactly one point $(D=0)$, which is forbidden by the trivalence assumption. Thus $v$ is acute.

Conversely, suppose $T$ is not trivalent. Consider a branch point $x\in T$ with valency $k>3$. We denote by $C_1,\ldots,C_k$ the components of $T\setminus\{x\}$. They decompose the Gromov boundary of $T$ into disjoint open subsets $\partial C_1,\ldots,\partial C_k$.
It is known that the set of ends of axes $A_\gamma$ is dense in $\partial T\times \partial T$ for $\gamma\in \pi_1(S)$. One proof consists in considering the sequence of fixed points for the elements $\alpha \beta^n$: the attractive points converge to the image by $\alpha$ of the attractive point of $\beta$ and the repulsive points to the repulsive point of $\beta$. By minimality, the set of repulsive points of all $\beta$'s is dense in $\partial T$, and again by minimality the images of a given attractive point by all $\alpha$'s is dense in $\partial T$.
Thus we can find two axes $A_\alpha, A_\beta$ whose ends are respectively in $\partial C_1\times \partial C_3$ and $\partial C_2\times \partial C_4$. These two axes meet exactly in $x$, and Proposition \ref{Prop_Paulin}, case $2$ $(i)$ $(D=0)$ yields $l(\alpha\beta)=l(\alpha\beta^{-1})=l(\alpha)+l(\beta)$, showing that $v$ is not acute.

$3\iff 2$. Recall from the construction of the dual tree $T$ to a filling measured geodesic lamination $\lambda \subset S$ that the valency of a branch point in $T$ is equal to the number of sides to the corresponding hyperbolic ideal polytope in $\tilde{S}\setminus\tilde{\lambda}$. Hence $T$ is trivalent if and only if $\lambda$ is maximal.
%
%(by the positivity assumption, $\lambda$ does not contain closed geodesics).
%
%Following \cite[Section 2.3]{Otal_hyperbolization}, the vertices of $T$ correspond either to leaves or to complementary regions of $\lambda$. Each subset of $\tilde{S}$ corresponding to a vertex splits the complement either in $2$ if it is a geodesic leaf, or $k>2$ if it is isometric to the interior of an ideal hyperbolic polytope with $k$ sides. The same is true for $T$, so it is trivalent if and only if $\lambda$ is maximal.
\end{proof}

It is well-known that the set of maximal laminations has full measure in $\ML$, see Lemma 2.3 in \cite{Lindenstrauss-Mirzakhani}. Hence Proposition \ref{maxacutriv} implies the following corollary.

\begin{corollary}
The set of acute simple valuations has full measure in $\ML$.
\end{corollary}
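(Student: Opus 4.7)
The plan is to reduce the statement to Proposition \ref{maxacutriv} and then invoke a known measure-theoretic fact about maximal measured geodesic laminations. By Proposition \ref{maxacutriv}, a positive simple valuation $v_\lambda$ is acute if and only if $\lambda$ is a maximal measured geodesic lamination. So the set of acute valuations in $\ML$ is identified, up to the non-filling locus, with the set of maximal laminations, and it suffices to show the latter has full measure (and that the non-filling locus has measure zero).

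The first step is to note that the non-filling locus has measure zero. A non-filling measured lamination has a complementary component containing a simple closed curve $\gamma$ disjoint from $\lambda$, so the locus of non-filling laminations is contained in the countable union $\bigcup_\gamma \{\lambda \in \ML \mid i(\lambda,\gamma)=0\}$ as $\gamma$ ranges over simple closed curves on $S$. Each of these subsets is a PL-subset of $\ML$ of positive codimension (see \cite[Lemma 1.6]{MS_Aut(CV)_2020}), hence of Lebesgue measure zero with respect to Thurston's measure, and a countable union of such sets remains of measure zero.

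The second and decisive step is to invoke Lemma 2.3 of \cite{Lindenstrauss-Mirzakhani}, which asserts that the set of maximal (and in particular filling) measured geodesic laminations has full measure in $\ML$. Combining this with Proposition \ref{maxacutriv} yields that the set of acute simple valuations, being precisely the set of filling maximal laminations under the identification $\ML \simeq \{\text{simple valuations}\}$, has full measure. There is no genuine obstacle here: all the work has been packaged into Proposition \ref{maxacutriv} and the cited measure-theoretic result; the only point to be careful about is separating the ``positive/filling'' condition from the ``maximal'' condition in the definition of acute, which is handled by the measure zero argument above.
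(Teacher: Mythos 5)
Your proof is correct and takes essentially the same route as the paper, combining Proposition \ref{maxacutriv} with the cited measure-theoretic fact from \cite{Lindenstrauss-Mirzakhani}. Your extra care about the non-filling locus is in fact superfluous: a maximal lamination is automatically filling (its complementary regions are ideal triangles, which contain no essential simple closed curve), so the acute locus coincides exactly with the maximal locus and the single application of Lemma 2.3 of \cite{Lindenstrauss-Mirzakhani} already suffices.
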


\subsection{Bonahon cycles}

Let $T$ be a trivalent real tree with a free and minimal action of $\pi_1(S)$. To define its tangent space in the ``moduli space'' of such objects, imagine the combinatorial structure as being fixed while the distance function $d$ undergoes an infinitesimal deformation. Restricting attention to the variation of the distance between branch points, we obtain a symmetric map $c:V(T)^2\to \R$ which is $\pi_1(S)$-invariant and satisfies $c(x,y)=c(x,z)+c(z,y)$ whenever $z$ belongs to the geodesic joining $x$ to $y$. We will refer to these maps as Bonahon cocycles and introduce them formally using a dual approach.

\begin{definition}
We define the space $\Bb(T)$ as the real vector space generated by pairs $(x,y)$ of elements in $V(T)$ subject to the relations:
\begin{enumerate}
    \item $(x,y)=(y,x)$ for all $x,y\in V(T)$
    \item $(x,y)=(x,z)+(z,y)$ if $z$ belongs to the geodesic joining $x$ to $y$. 
\end{enumerate}
The group $\pi_1(S)$ acts linearly on $\Bb(T)$ by $g(x,y)=(gx,gy)$ and Bonahon cocycles are the elements of $\Hom_\pi(\Bb(T),\R)=\Hom(\Bb(T)_\pi,\R)$ where $\Bb(T)_\pi$ is the space of coinvariants.
\end{definition}

\begin{proposition}\label{prop-pairing}
There is a unique alternating bilinear form $\cdot$ on $\Bb(T)$ such that for all pairs $(x,y),(z,t)\in V(T)^2$ we have:
\begin{enumerate}
    \item $(x,y)\cdot(z,t)=0$ if the geodesics from $x$ to $y$ and from $z$ to $t$ are disjoint.
    \item $(x,z)\cdot(z,y) =\frac \epsilon 2$ if $z$ belongs to the geodesic from $x$ to $y$, where $\epsilon= \pm 1$ is the cyclic order of the components $(h_x, h, h_y)$ of $T\setminus\{z\}$ such that $x\in h_x$, $y\in h_y$. %depending on the cyclic according to whether $z$ lies to the left or to the right of the geodesic.
\end{enumerate}
%Moreover, this form is non-degenerate.
\end{proposition}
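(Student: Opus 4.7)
I would prove existence and uniqueness of the pairing by a reduction-to-elementary-cases argument, exploiting the trivalence of $T$ to control how two geodesic segments can intersect.

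For uniqueness, I consider two generators $(x,y), (z,t)$ of $\Bb(T)$ and analyze their geodesic intersection $I = [x,y] \cap [z,t] \subset T$, which is always a (possibly empty or degenerate) subsegment. If $I = \emptyset$, condition (1) gives the value. If $I = \{p\}$ reduces to a single point, trivalence of $T$ rules out $p$ being an interior point of both geodesics --- this would require at least four distinct directions at $p$ --- so $p$ must be an endpoint of at least one of them; applying the concatenation relation $(x,y) = (x,p) + (p,y)$ in $\Bb(T)$ where needed reduces the computation to pairs both ending at $p$, where condition (2) applies. If $I = [a,b]$ is a non-degenerate segment, I split both generators at the interior branch points among $\{a,b\}$ via relation (2): $(x,y) = (x,a) + (a,b) + (b,y)$ and, after relabeling, $(z,t) = (z,a) + (a,b) + (b,t)$. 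The bilinear expansion produces nine terms: the self-pairing $(a,b)\cdot(a,b)$ vanishes by antisymmetry, and each of the remaining eight consists either of disjoint segments or of segments meeting at a single common endpoint, hence is governed by condition (1) or (2). Since bilinearity, antisymmetry, and the two conditions together force each of these values, the pairing is uniquely determined.

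For existence, I would define the pairing on the free vector space on $V(T)^2$ using the above prescription and extend bilinearly, then check that it descends to $\Bb(T)$ and is alternating. Symmetry $(x,y)=(y,x)$ is immediate, because the geodesic and the decomposition depend only on the unordered pair. The antisymmetry of the form reduces, via condition (2), to the fact that swapping $x$ and $y$ reverses the cyclic order $(h_x, h, h_y)$ and thus the sign of $\epsilon$. The crucial verification is compatibility with the concatenation relation: for $z \in [x,y]$ and any generator $(u,v)$, one must prove
\[
[(x,z) + (z,y)] \cdot (u,v) = (x,y) \cdot (u,v).
\]

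The main technical obstacle is this last compatibility check. I would argue by case analysis on the position of $z$ relative to $[u,v]$. When $z \notin [u,v]$, the elementary decomposition of the product into pieces is essentially the same on both sides of the identity, and they agree term by term. The delicate case is $z \in [u,v]$: trivalence then forces the geodesics $[x,y]$ and $[u,v]$ to overlap in a neighborhood of $z$, so $z$ lies in the interior of the overlap segment $I = [x,y] \cap [u,v]$. The identity reduces to a local telescoping at $z$, where the two halves $(x,z) \cdot (u,v)$ and $(z,y) \cdot (u,v)$ produce contributions whose sum matches the single contribution from $(x,y) \cdot (u,v)$; this is ultimately a cancellation among cyclic-order signs at the trivalent branch point $z$, together with the antisymmetry of $\epsilon$ under swapping two of its arguments. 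Once this is verified, the pairing descends to $\Bb(T)$ and is bilinear, alternating, and uniquely determined, completing the proof.
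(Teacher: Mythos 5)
Your approach is the same as the paper's: reduce a general pair of generators to the elementary cases (1) and (2) by decomposing both geodesics at the endpoints $a,b$ of their intersection segment and expanding by bilinearity. Your observation that trivalence forbids a single-point intersection at a point interior to both geodesics is a useful refinement that the paper leaves implicit in its figure. You are also more explicit than the paper --- whose entire proof is one sentence pointing to Figure 4 --- about what existence actually requires, namely that the pairing defined on the free vector space over $V(T)^2$ descends to $\Bb(T)$, i.e.\ is unchanged when a generator is rewritten via the concatenation relation $(x,y)=(x,z)+(z,y)$. That said, like the paper, you leave the delicate verification when $z\in[u,v]$ as a sketch; to close it one must also treat the sub-case where $z$ is an endpoint of $[u,v]$ (so that $z$ need not lie in the interior of the overlap segment) and track the cyclic-order signs at $z$ carefully. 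This is a gap shared with the paper's own terse argument, not a defect of your route.
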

\begin{proof}
The intersection of $(x,y)$ and $(z,t)$ is either empty or has the form $(a,b)$. Decomposing $(x,y)$ and $(z,t)$ into segments involving $a$ and $b$ as in Figure \ref{fig:inter-cycles}, we are reduced by bilinearity, to cases 1 or 2. This proves both uniqueness and existence. 
%Suppose that $\alpha\in \Bb(T)$ is a non-zero element in the kernel of this form. We consider all finite sets $W$ such that one can write $\alpha=\sum c_i (x_i,y_i)$ for $x_i,y_i\in W$ and take $W$ minimal with respect to the inclusion of their convex hull in $T$. There is a unique way to write $\alpha$ as a combination of $(x_i,y_i)$ if we restrict to pairs $x_i,y_i$ which are closest neighbors in $T$. Moreover, by minimality, the coefficients of the extremal leaves are non-zero. By intersecting with geodesic getting out of $W$, we find a contradiction. 
\end{proof}
\begin{figure}[htbp]
    \centering
    \def\svgwidth{8cm}
    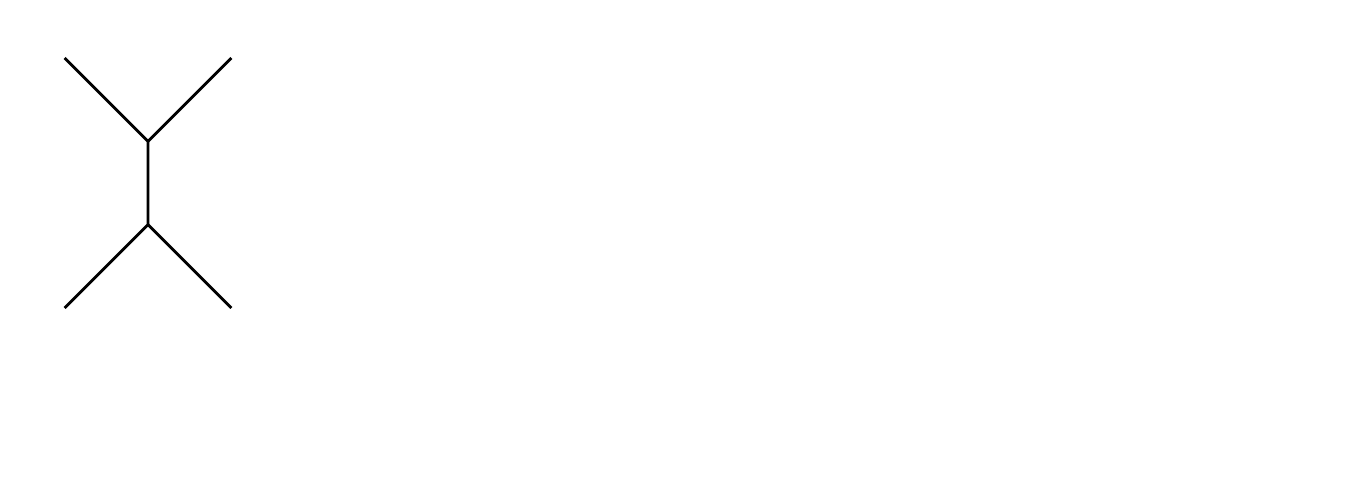
    \caption{}
    \label{fig:inter-cycles}
\end{figure}
It is an amusing exercise to show that this pairing is non-degenerate. Instead we shall deduce it from Poincaré duality in Thuston's model, see Section \ref{sectionthurston}. % This will enable us to deduce non-degeneracy 
Indeed we are interested in the space $\Bb(T)_\pi$ endowed with the following pairing obtained by averaging the previous one, whose non-degeneracy will thus follow from standard arguments in cohomology.

\begin{proposition}
The following sum is finite, and defines an alternating bilinear pairing on $\Bb(T)_\pi$: 
$$(x,y)\cdot_\pi (z,t)=\sum_{g\in \pi_1(S)} (x,y)\cdot g(z,t).$$
\end{proposition}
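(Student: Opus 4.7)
The plan is to prove four things: finiteness of the sum, bilinearity, antisymmetry, and well-definedness on the coinvariants. Only finiteness is genuinely delicate.

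For finiteness, I will realize $T$ as the dual tree of a maximal measured geodesic lamination $\lambda$ on $S$ (after fixing a hyperbolic structure), which is possible by Proposition \ref{maxacutriv}. The compact segments $[x,y]$ and $[z,t]$ in $T$ then lift to compact transverse geodesic arcs $\tilde{c}_1, \tilde{c}_2$ in $\tilde{S} \simeq \H^2$ whose projections under $f \colon \tilde{S} \to T$ are $[x,y]$ and $[z,t]$ respectively. After a small generic isotopy on $S$, the projections $c_1, c_2 \subset S$ are in general position and meet transversely in finitely many points. The central claim is the identity
$$(x,y) \cdot g(z,t) = \pm\,\bigl[\tilde{c}_1 \cap g\tilde{c}_2 \ne \emptyset\bigr]$$
with the sign dictated by the cyclic orientation at the crossing. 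I would prove this by a case analysis on the intersection $[x,y] \cap g[z,t] \subset T$: empty gives zero on both sides; a single point or a non-trivial sub-segment $[a,b]$ produces $\pm 1/2$ contributions from Proposition \ref{prop-pairing} at each endpoint of the intersection, and these cancel exactly when $\tilde{c}_1$ and $g\tilde{c}_2$ traverse a common band of leaves $f^{-1}([a,b])$ without crossing in $\H^2$, while they add to $\pm 1$ when the two arcs actually cross. Since the action on $\tilde{S}$ is free, distinct $g \in \pi_1(S)$ with $\tilde{c}_1 \cap g\tilde{c}_2 \ne \emptyset$ are in bijection with transverse intersection points of $c_1$ and $c_2$ in $S$, which form a finite set.

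Bilinearity then follows immediately from the bilinearity of $\cdot$ established in Proposition \ref{prop-pairing}. For antisymmetry, I would combine the antisymmetry of $\cdot$ with the $\pi_1(S)$-invariance of $\cdot$ (both the underlying tree structure and the cyclic orientation used to define it are $\pi_1(S)$-equivariant, as recalled in Section \ref{sectiontrees}) and a reindexing $g \mapsto g^{-1}$:
$$(z,t) \cdot_\pi (x,y) = \sum_g (z,t) \cdot g(x,y) = -\sum_g g(x,y) \cdot (z,t) = -\sum_g (x,y) \cdot g^{-1}(z,t) = -(x,y) \cdot_\pi (z,t).$$
For well-definedness on coinvariants, for $h \in \pi_1(S)$, $\pi_1(S)$-invariance yields $(hx, hy) \cdot g(z,t) = (x,y) \cdot h^{-1} g(z,t)$, and reindexing $g' = h^{-1}g$ gives $(hx, hy) \cdot_\pi (z,t) = (x,y) \cdot_\pi (z,t)$; the argument for the right entry is analogous.

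The main obstacle is the key identity, in particular the cancellation mechanism when $\tilde{c}_1$ and $g\tilde{c}_2$ cross a common band of leaves without meeting in $\H^2$. This demands careful bookkeeping relating the cyclic orientation signs at the branch points $a, b$ of $T$ to the relative position of the two transverse geodesic arcs within the band $f^{-1}([a,b])$. I expect this to amount to tracking which of the three branches at $a$ and at $b$ contain each of $x, y, gz, gt$, and verifying that the two sign contributions agree if and only if the pair $(\tilde{c}_1, g\tilde{c}_2)$ is linked in the sense that the underlying $\H^2$-geodesics cross.
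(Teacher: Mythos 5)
Your finiteness argument follows the same route as the paper: realize $T$ as the dual tree of a maximal measured geodesic lamination $\lambda$ via Proposition \ref{maxacutriv}, lift the tree segments to compact arcs in $\tilde{S}\simeq \H^2$, and use properness of the $\pi_1(S)$-action to bound the number of $g$ producing intersecting lifts. The paper only records this finiteness step (leaving bilinearity, antisymmetry and descent to $\Bb(T)_\pi$ as formal consequences of Proposition \ref{prop-pairing} and $\pi_1(S)$-equivariance, which you spell out correctly), and it proves only the implication it actually needs, namely that disjoint lifts force vanishing pairing: if $[x_0,y_0]$ and $g[z_0,t_0]$ are disjoint then $[x,y]$ and $g[z,t]$ are either disjoint in $T$ or overlap as in the middle configuration of Figure \ref{fig:inter-cycles}, where the two half-contributions cancel.

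Your central claim is a genuine biconditional, $(x,y)\cdot g(z,t)=\pm\bigl[\tilde{c}_1\cap g\tilde{c}_2\ne\emptyset\bigr]$, and it is slightly too strong. When $[x,y]$ and $g[z,t]$ share an endpoint $a$ (say $gz=x$, which by freeness of the action can happen for at most one $g$ per choice of matching vertex, so at most four $g$ in total), the decomposition of the pairing around $a$ produces a single $\pm\tfrac12$ term rather than a cancelling or reinforcing pair: e.g. $(a,y)\cdot(a,t')=\tfrac{\epsilon}{2}$ from case 2 of Proposition \ref{prop-pairing}. In that situation the lifted arcs $\tilde{c}_1$ and $g\tilde{c}_2$ both have an endpoint inside $\Delta_a$ and may well be disjoint, so the identity fails; one can also arrange lifted arcs that cross inside $\Delta_a$ while the tree pairing vanishes. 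None of this affects finiteness, since these exceptional $g$ are finitely many, but it does mean the ``main obstacle'' you point to cannot be resolved exactly as stated; you either need a generic-position hypothesis on $x,y,z,t$ or an explicit separate treatment of the shared-vertex $g$'s. The paper sidesteps the delicate converse direction entirely by proving only the one implication required.
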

\begin{proof}
We only have to prove finiteness of the sum. For that, we view $T$ as the dual tree to a maximal measured lamination $\lambda$ on $S$. The vertices $x,y,z,t$ correspond to ideal triangles in $\tilde{S}\simeq \H^2$: choose $x_0,y_0,z_0,t_0$ in each one of them. Since $\pi_1(S)$ acts properly on $\H^2$, the geodesics $[x_0,y_0]$ and $g[z_0,t_0]$ are disjoint for all but a finite number of $g\in \pi_1(S)$. If they are disjoint, they project in the tree either as disjoint geodesics, either as in the middle case of Figure \ref{fig:inter-cycles} and their intersection vanishes.
\end{proof}

% Next we set up an isomorphism between $\Bb(T)_\pi$ and $T_v^*\ML$ which preserves the intersection form.
%
We shall prove in Section \ref{sectionthurston} that $\Bb(T)_\pi$ is the antisymmetric part of $H_1(\tilde{S},\R)$ where $\tilde{S}$ is the orientation covering of the measured lamination $\lambda$, thus recovering Thurston's original point of view on the tangent space $T_\lambda \ML$. 

\subsection{The symplectomorphism theorem}

Fix a strict valuation $v\in \ML$, and recall it identifies the set of multicurves with $\Lambda^+_v$. Let $T$ be a real tree with a free and minimal action of $\pi_1(S)$ representing $v$, that is such that $l(\alpha)=2 v(t_\alpha)$ for all $\alpha\in \pi_1(S)$. 

The next lemma can be deduced from down-to-earth methods (repeated applications of Proposition \ref{Prop_Paulin}). 
It is a direct consequence of a more conceptual construction for $T_v$ using Bass-Serre theory: we refer to \cite[Section 4]{Otal_compact-repres_2015}.
%A more conceptual proof follows from Bass-Serre's construction: we refer to \cite[Section 4]{Otal_compact-repres_2015}.

\begin{lemma}
The distance between two branch points in $T$ belongs to $\Lambda_v$.
\end{lemma}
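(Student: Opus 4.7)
The plan is to realize the distance $d(x,y)$ between two branch points as a rational combination of translation lengths $l(\gamma) = 2 v(t_\gamma)$ by invoking Proposition \ref{Prop_Paulin}(1), whose formula $l(\gamma\delta) = l(\gamma) + l(\delta) + 2D$ directly expresses the distance $D$ between two disjoint axes as an element of $\Lambda_v$.

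Since $v$ is strict, Corollary \ref{strictimpliesacute} ensures that it is acute; moreover strictness forces $v(t_\mu) \ne v(t_\emptyset) = 0$ for every non-empty multicurve $\mu$, so $v$ is positive and the associated lamination is filling. By Proposition \ref{maxacutriv} the tree $T$ is therefore trivalent, so $T \setminus \{x\}$ splits into exactly three components, precisely one of which contains $y$; call the other two $C_1^x, C_2^x$, and define $C_1^y, C_2^y$ analogously at $y$. The crux of the argument is to produce elements $\alpha, \beta \in \pi_1(S)$ whose axes $A_\alpha, A_\beta$ are disjoint with closest points $x$ and $y$. For this I would invoke the density of axis end-pairs in $\partial T \times \partial T$ established inside the proof of Proposition \ref{maxacutriv}: it provides $\alpha$ whose attracting and repelling fixed points lie in $\partial C_1^x$ and $\partial C_2^x$, and $\beta$ whose fixed points lie in $\partial C_1^y$ and $\partial C_2^y$.

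By construction $A_\alpha$ is the bi-infinite geodesic between two ends in distinct components of $T\setminus\{x\}$, so it passes through $x$, and it is confined to $C_1^x \cup \{x\} \cup C_2^x$, hence avoids $y$. Symmetrically, $A_\beta$ passes through $y$ and avoids $x$. Any geodesic from $A_\alpha$ to $A_\beta$ is then forced to traverse the segment $[x,y]$, which shows $A_\alpha \cap A_\beta = \emptyset$ and that the distance between these axes equals $d(x,y)$. Proposition \ref{Prop_Paulin}(1) then gives
\[
2\, d(x,y) \;=\; l(\alpha\beta) - l(\alpha) - l(\beta) \;=\; 2\bigl(v(t_{\alpha\beta}) - v(t_\alpha) - v(t_\beta)\bigr),
\]
whence $d(x,y) = v(t_{\alpha\beta}) - v(t_\alpha) - v(t_\beta) \in \Lambda_v$, as desired.

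The step I expect to require the most care is verifying that the constructed axes really pass through $x$ and $y$ in the prescribed pairs of branches (rather than skirting past nearby), because the density result only yields endpoints in open sets of $\partial T$ and one needs to check that these endpoints in $\partial C_i^x$ and $\partial C_j^y$ force the axes to enter the branch point itself. This follows from the characterization of the axis of a hyperbolic isometry as the geodesic between its two fixed ends, combined with the fact that such a geodesic passes through any separating point whose removal disconnects the two ends. The alternative Bass-Serre approach alluded to in the statement should bypass this limiting argument altogether by directly identifying branch-to-branch distances as edge lengths in a graph-of-groups decomposition of $\pi_1(S)$.
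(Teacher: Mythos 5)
Your proof is correct, and it gives a concrete realization of the ``down-to-earth'' route that the paper merely gestures at: the paper offers no proof of this lemma, saying only that it follows either from ``repeated applications of Proposition~\ref{Prop_Paulin}'' or from a Bass--Serre construction due to Otal, and then pointing to the latter reference. Your argument shows that in fact a \emph{single} application of Proposition~\ref{Prop_Paulin}(1) suffices once one has isolated, via the density of axis end-pairs in $\partial T\times\partial T$ (already established inside the proof of Proposition~\ref{maxacutriv}), a pair $\alpha,\beta$ whose axes pass through $x$ and $y$ respectively on ``opposite'' sides. The key geometric verifications are all sound: since $A_\alpha$ joins ends lying in two of the three components of $T\setminus\{x\}$ not containing $y$, it passes through the separating point $x$ and stays in the closure of those two components; since that closure lies entirely on the $x$-side of $y$, one gets $A_\alpha\cap A_\beta=\emptyset$, and the bridge between the two axes is exactly $[x,y]$. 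The normalization $l(\gamma)=2v(t_\gamma)$ together with $l(\alpha\beta)=l(\alpha)+l(\beta)+2D$ then gives $d(x,y)=v(t_{\alpha\beta})-v(t_\alpha)-v(t_\beta)\in\Lambda_v$ exactly as you wrote. Two small remarks for polish: (i) trivalence is actually more than you need here --- a branch point has valence $\ge 3$ in any case, so two of its half-trees miss $y$, and strictness already guarantees positivity --- but invoking Proposition~\ref{maxacutriv} does no harm; (ii) your opening sentence says the Paulin formula ``directly expresses $D$ as an element of $\Lambda_v$'', which is loose (there is a factor of $2$ to divide out), but your displayed computation at the end handles the normalization correctly.
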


%\begin{proof}
%Recall that $T$ is the union of the axes $A_\alpha$ for all non-trivial $\alpha\in T$. Following Proposition \ref{Prop_Paulin}, the distance between two disjoint axes and the length of the intersection of two axes belongs to $\Lambda_v$. It is sufficient to prove the lemma for two branch points belonging to a same axis $A_\gamma$. \textcolor{red}{ne faut il pas un argument pour la finitude du nombre de segments ? (il découle du fait qu'il n'y a qu'un nombre fini d'orbites)}  Hence, we suppose that $x\in A_\alpha \cap A_\gamma$ and $y\in A_\beta\cap A_\gamma$. \textcolor{blue}{Then $d(x,y) = $ }
%\end{proof}

Given $\phi\in T_v \ML=\Hom(\Lambda_v,\R)$, we define a corresponding $c_\phi\in \Hom_\pi(\Bb(T),\R)$ by setting $c_{\phi}(x,y)=\frac{1}{2}\phi(d(x,y))$ where $d$ is the distance on $T$. As $d$ is $\pi_1(S)$-invariant, $c$ is also, and the identity $c(x,z)=c(x,y)+c(y,z)$ for $y$ between $x$ and $z$ follows from the triangular equality satisfied by $d$. In other words, there is a well-defined map $\Psi:T_v\ML\to \Hom_\pi(\Bb(T),\R)$ given by $\Psi(\phi)=c_\phi$. 

\begin{proposition}\label{iso1}
The map $\Psi$ induces an isomorphism $T_v\ML\simeq \Hom_\pi(\Bb(T),\R)$. 
\end{proposition}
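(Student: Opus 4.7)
I will prove that $\Psi$ is well-defined and bijective. Well-definedness is immediate: for $\phi \in \Hom(\Lambda_v, \R) = T_v\ML$, the formula $c_\phi(x,y) = \tfrac{1}{2}\phi(d(x,y))$ makes sense because $d(x,y) \in \Lambda_v$ by the preceding lemma; it inherits symmetry and additivity along geodesics from those of $d$, and is $\pi_1(S)$-invariant because $\pi_1(S)$ acts by isometries.

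For injectivity, suppose $\Psi(\phi) = 0$, so that $\phi(d(x,y)) = 0$ for every pair of branch points $x,y$. I claim the distances $d(x,y)$ between branch points generate $\Lambda_v$ as an abelian group. Indeed, for every nontrivial $\alpha \in \pi_1(S)$, the axis $A_\alpha$ meets the branch locus $V(T)$: since $v$ is strict, it is acute by Corollary \ref{strictimpliesacute}, and by Proposition \ref{maxacutriv} the tree $T$ is then trivalent and the dual measured lamination $\lambda$ is maximal. A lift of $A_\alpha$ in $\tilde S \simeq \H^2$ is a complete geodesic transverse to $\tilde\lambda$ crossing infinitely many ideal triangles, each of which collapses to a branch point on $A_\alpha$. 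Picking any such $x \in A_\alpha \cap V(T)$, the translated point $\alpha x$ is again a branch point with $d(x,\alpha x) = l(\alpha) = 2 v(t_\alpha)$. The hypothesis forces $\phi$ to vanish on every $v(t_\alpha)$, and these generate $\Lambda_v^+$ and hence $\Lambda_v$, so $\phi = 0$.

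For surjectivity, given $c \in \Hom_\pi(\Bb(T),\R)$ I define a candidate inverse as follows. For $\alpha \ne 1$, pick any branch point $x \in A_\alpha$ and set $\psi(\alpha) := 2 c(x, \alpha x)$. This is independent of $x$: if $y \in A_\alpha \cap V(T)$ lies between $x$ and $\alpha x$, then additivity gives $c(x,\alpha x) = c(x,y) + c(y,\alpha x)$ and $c(y,\alpha y) = c(y,\alpha x) + c(\alpha x, \alpha y)$, while equivariance yields $c(\alpha x, \alpha y) = c(x,y)$, whence $c(y,\alpha y) = c(x,\alpha x)$. The remaining task is to show that $\psi$ factors through the translation-length map $\alpha \mapsto l(\alpha) = 2 v(t_\alpha)$ and extends linearly to a homomorphism $\phi : \Lambda_v \to \R$; equivalently, that the $\R$-linear map $\bar D: \Bb(T)_\pi \to \Lambda_v \otimes_\Z \R$ sending $[(x,y)] \mapsto d(x,y) \otimes 1$ is injective.

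This last injectivity is the main obstacle. I would close it by a dimension count. The injectivity of $\Psi$ already gives $\dim \Hom_\pi(\Bb(T),\R) \ge \dim T_v\ML = \ratrk(v)$ by Proposition \ref{isomorphisms}, and for strict $v$ the latter equals $6g-6$ because strictness saturates the Abhyankar inequality (to be established in Section \ref{sectionpoisson}). On the other hand, the subsequent identification of Bonahon's model with Thurston's model $\Bb(T)_\pi \simeq H^1(S',\R)^-$ in Section \ref{sectionthurston} provides the matching upper bound $\dim \Hom_\pi(\Bb(T),\R) = 6g-6$, so $\Psi$ is forced to be an isomorphism. A more self-contained proof of the injectivity of $\bar D$ would amount to an infinitesimal Culler--Morgan rigidity statement, which could be derived from the combinatorics of a fundamental domain in $T/\pi_1(S)$ via Bass--Serre theory, but the dimension comparison is more economical.
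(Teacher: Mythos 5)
Your treatment of well-definedness and injectivity matches the paper's, and your reduction of surjectivity to the injectivity of the transpose map $\bar D\colon \Bb(T)_\pi\to\Lambda_v\otimes_\Z\R$ is sound. The problem is that you do not prove $\bar D$ injective; you replace it with a dimension count, and that count is circular. You invoke $\ratrk(v)=6g-6$ for strict $v$, attributing it to Section~\ref{sectionpoisson}, but no such proof appears there: the paper's Proposition~\ref{isomorphisms} only gives $\dim T_v\ML=\ratrk(v)\le\dim X$, and the promised proof that strictness forces equality is precisely a downstream consequence of the symplectomorphism chain that includes Proposition~\ref{iso1} together with the identification $\Bb(T)_\pi\simeq H_1(S',\R)^-$. (By contrast, the other ingredient you use, $\dim\Bb(T)_\pi=6g-6$, is genuinely independent of Proposition~\ref{iso1} — it comes from Lemma~\ref{homology-arbre}, the Cartan--Leray spectral sequence, and Riemann--Hurwitz in Section~\ref{sectionthurston}.) Without an independent proof that $\ratrk(v)=6g-6$, your dimension comparison cannot close.

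The paper sidesteps this by constructing the inverse of $\Psi$ directly, and the actual technical content lies exactly in the step you defer. Given $c\in\Hom_\pi(\Bb(T),\R)$ one sets $\phi(t_\alpha)=c(x,\alpha x)$ for $\alpha$ simple and any branch point $x\in A_\alpha$, extends linearly to multicurves, and then to $\C[X]^*$ by $v$-extremal multicurves. The crux is verifying that the resulting $\phi$ is multiplicative, i.e.\ that the defining formula $\phi(t_\gamma)=c(x,\gamma x)$ already holds for arbitrary $\gamma\in\pi_1(S)$ and that $\phi(t_\alpha)=\sum\phi(t_{\alpha_i})$ for multiloops. Both are established by induction on self-intersections: at a crossing one writes $\gamma=\alpha\beta$ in $\pi_1(S,p)$, uses acuteness of $v$ to distinguish $v(t_{\alpha\beta})$ from $v(t_{\alpha\beta^{-1}})$, and applies Proposition~\ref{Prop_Paulin} (and Paulin's axis computations) to express $d(x,\alpha\beta x)$ as a sum of distances already controlled by induction. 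This is the ``infinitesimal Culler--Morgan rigidity'' you name as the obstacle; it must be proved, not absorbed into a dimension count whose input is unavailable at this stage of the paper.
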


\begin{proof}
The linearity of $\Psi$ is obvious.
We first prove injectivity: suppose $c_\phi=0$. For any non-trivial $\alpha\in \pi_1(S)$, choose a branch point $x$ on its axis $A_\alpha$ so that the translation length satisfies  $l(\alpha)=2v(t_\alpha)=d(x,\alpha x)$. As $c_\phi(x,\alpha x)=\frac{1}{2}\phi(d(x,\alpha x))$ we get $\phi(v(t_\alpha))=0$, but $\Lambda_v$ is generated by the $v(t_\alpha)$ for $\alpha\in \pi_1(S)$ so $\phi=0$. 

This suggest the construction of the inverse, but this time we think of $\phi$ as a map $\phi:\C(X)^*/\Oo_v^\times\to \R$. Given $c\in \Hom_\pi(\Bb(T),\R)$, we define $\phi(t_\alpha)= c(x,\alpha x)$ for any simple curve $\alpha$, where $x$ is any branch point in $A_\alpha$ (by additivity of $c$, this does not depend on the branch point). We extend $\phi$ to any multicurve by linearity. Finally for any $f\in \C[X]^*$ we set $\phi(f)=\phi(t_\mu)$ where $\mu$ is the $v$-extremal multicurve in $f$. The point is to show that $\phi$ indeed belongs to $T_v\ML$: as it satisfies Equation \eqref{stricto} by construction, it remains to prove that it is multiplicative.

We first show that the defining property $\phi(t_\gamma)=c(x,\gamma x)$ extends to all loops $\gamma \in \pi_1(S)$ by induction on the number of self intersections.
%
%Indeed if $\alpha$ is a non-simple element of $\pi_1(S)$, we have two possibly conflicting definitions of $\phi(t_\alpha)$. 
%
%Let us show that they are coherent by induction on the number of self-intersections. 
%
Suppose $\gamma$ has $n>0$ intersections. Let $p$ be one of them and denote by $\alpha,\beta$ the two elements of $\pi_1(S,p)$ such that $\gamma=\alpha\beta$. Since $v$ is acute, we have either $v(t_{\alpha\beta^{-1}})<v(t_\alpha)+v(t_\beta)=v(t_{\alpha\beta})$ or $v(t_\alpha)+v(t_\beta)<v(t_{\alpha\beta^{-1}})=v(t_{\alpha\beta})$. 

In the first case, the axes $A_\alpha$ and $A_\beta$ intersect along a segment $xy$ and both isometries push $x$ in the direction of $y$ with a translation length greater than $d(x,y)$. 
Then by \cite[Proposition 1.6]{Paulin_Gromov-R-trees_1989}, $l(\alpha\beta)=d(x,\alpha\beta x)=d(x,y)+d(y,\alpha y)+d(\alpha y,\alpha\beta x)$. This gives $c(x,\alpha\beta x)=c(x,y)+c(y,\alpha y)+c(y,\beta x)=c(y,\alpha y)+c(x,\beta x)$. 
Since both $x$ and $y$ belong simultaneously to $A_\alpha$ and $A_\beta$ we establish, by the induction hypothesis, that the two definitions for $\phi(t_\gamma)$ coincide. 

In the second case, the axes $A_\alpha$ and $A_\beta$ are disjoint: let $xy$ be the geodesic joining them. Following again \cite{Paulin_Gromov-R-trees_1989}, $x$ belongs to the axes of both $\alpha\beta$ and $\alpha\beta^{-1}$. By the induction hypothesis, $\phi(t_{\alpha\beta^{-1}})$ is equal to $c(x,\alpha\beta^{-1}x)$. 
%
%One check from the picture that 
The first case in Proposition \ref{Prop_Paulin} shows that $d(x,\alpha\beta^{-1}x)=d(x,\alpha\beta x)$, hence $c(x,\alpha\beta^{-1}x)=c(x,\alpha\beta x)$ and $2\phi(t_{\alpha\beta})=c(x,\alpha\beta x)$ as claimed. 

To finish the proof, consider $f,g\in \C[X]$, we must show that $v(fg)=v(f)+v(g)$. If $\mu$ and $\nu$ are the $v$-extremal multicurves of $f$ and $g$, then the $v$-extremal multicurve of $fg$ is that of $t_\mu t_\nu$, denoted by $\xi$. We must show that $\phi(t_\mu t_\nu)=\phi(t_\xi)=\phi(t_\mu)+\phi(t_\nu)$.
Let us prove more generally that if $\alpha=\alpha_1\cup\cdots\cup\alpha_n$ is a multiloop then $\phi(t_\alpha)=\phi(t_{\alpha_1})+\dots+\phi(t_{\alpha_n})$, reasoning by induction on the self-intersection number of $\alpha$. 

If the components $\alpha_j$ are disjoint, we may replace each one of them by its $v$-extremal smoothing, which remain disjoint, and the result follows from the definition of $\phi$. 
Hence suppose that $\alpha_1$ and $\alpha_2$ intersect at $p$. Up to changing the orientation of $\alpha_2$, we can suppose that $v(t_{\alpha_1\alpha_2})=v(t_{\alpha_1})+v(t_{\alpha_2})$. The computation in the first case at the beginning of the proof shows that $\phi(t_{\alpha_1\alpha_2})=\phi(t_{\alpha_1})+\phi(t_{\alpha_2})$. We also have $\phi(t_{\alpha_1\alpha_2}t_{\alpha_3}\cdots t_{\alpha_n})=\phi(t_{\alpha_1\alpha_2})+\phi(t_{\alpha_3})+\cdots+\phi(t_{\alpha_n})$ by the induction hypothesis.
% The conclusion follows.
\end{proof}

\begin{theorem}\label{iso2}
The isomorphism $\Psi^*:\Bb(T)_\pi\to T_v^*\ML$ preserves the symplectic form.
\end{theorem}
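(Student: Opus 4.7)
The plan is to show, on a spanning family of cycles in $\Bb(T)_\pi$, that Bonahon's pairing $\cdot_\pi$ agrees with the pullback via $\Psi^*$ of the Poisson form $\pi_v$. Unwinding the definition of $\Psi$ from Proposition \ref{iso1}, we have $\Psi^*(x, \alpha x) = d_v\log t_\alpha$ for any $\alpha \in \pi_1(S)\setminus\{1\}$ and any branch point $x \in A_\alpha$; the surjectivity argument of Proposition \ref{iso1} shows these cycles span $\Bb(T)_\pi$. It therefore suffices to prove
\[
(x, \alpha x) \cdot_\pi (y, \beta y) = \{t_\alpha, t_\beta\}_v \quad \text{for all } \alpha, \beta \in \pi_1(S)\setminus\{1\}.
\]

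On the Goldman side, put $\alpha \cup \beta$ in taut position and apply formula \eqref{goldman}. At each intersection $p$, acuteness of $v$ (Lemma \ref{uniquesmoothing} and Corollary \ref{strictimpliesacute}) combined with the Smoothing Lemma \ref{smoothinglemma} singles out a unique $\eta_p \in \{\pm 1\}$ for which $v(t_{\alpha_p\beta_p^{\eta_p}}) = v(t_\alpha t_\beta)$, the other smoothing having strictly smaller valuation. Using Procesi's identity $t_\alpha t_\beta = t_{\alpha_p \beta_p} + t_{\alpha_p \beta_p^{-1}}$, the residue of $(t_{\alpha_p\beta_p} - t_{\alpha_p\beta_p^{-1}})/(t_\alpha t_\beta)$ modulo $\Mm_v$ equals $\eta_p$, hence $\{t_\alpha, t_\beta\}_v = \sum_{p \in \alpha \cap \beta} \epsilon_p \eta_p$.

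On the Bonahon side, the averaging formula expands $(x, \alpha x) \cdot_\pi (y, \beta y) = \sum_{g \in \pi_1(S)} (x, \alpha x) \cdot (gy, g\beta g^{-1} gy)$ as a finite sum, with nonzero terms indexed by $g$ such that $A_\alpha$ and $g A_\beta$ cross in $T$, each contributing a sign $\pm 1$ computed through Proposition \ref{prop-pairing} and Figure \ref{fig:inter-cycles}. Intersections $p \in \alpha \cap \beta$ on $S$ are in $\pi_1(S)$-equivariant bijection with such crossings in $T$: given $p$, the lifts $\tilde \alpha_p, \tilde \beta_p \subset \tilde S$ cross transversally and their endpoints link in $\partial \pi_1(S)$; since the partially-defined equivariant boundary map from $\partial \pi_1(S)$ to $\partial T$ of Section \ref{sectiontrees} preserves cyclic order, the axes $A_{\alpha_p}, A_{\beta_p}$ cross in $T$, as in the proof of the Smoothing Lemma.

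The main obstacle is the sign compatibility under this bijection. The cyclic orientation of the prongs at a branch point of $T$ is induced from the orientation of $S$ through the equivariant quotient $\tilde S \to T$, so the Bonahon cyclic-order sign at a crossing matches $\epsilon_p$; the two cases of Proposition \ref{Prop_Paulin} for the translation directions on the shared portion of two axes determine which smoothing is $v$-extremal, and hence the sign $\eta_p$. Verifying this matching uniformly across all sub-cases of Proposition \ref{prop-pairing}, in particular when $A_\alpha \cap g A_\beta$ is a nontrivial segment rather than a single point, requires careful local bookkeeping of the cyclic orders and translation directions at the branch points involved. Once aligned, both sums equal $\sum_p \epsilon_p \eta_p$, completing the proof.
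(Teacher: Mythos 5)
Your overall strategy coincides with the paper's: reduce to simple curves $\alpha,\beta$, use the equivariant bijection between intersection points $p\in\alpha\cap\beta$ and axis crossings $A_\alpha\cap gA_\beta$ in $T$ (via the boundary map preserving cyclic orders), and match signs term-by-term. The Goldman-side computation is correct and a welcome clarification of the paper's formula \eqref{inter-axes}: dividing the Goldman sum by $t_\alpha t_\beta = t_{\alpha_p\beta_p}+t_{\alpha_p\beta_p^{-1}}$ and reducing mod $\Mm_v$ indeed yields $\{t_\alpha,t_\beta\}_v = \sum_p \epsilon_p\eta_p$, with $\eta_p$ the extremal smoothing sign at $p$.

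The gap is on the Bonahon side. The claim that each nonzero term $(x,\alpha x)\cdot(gy,g\beta y)$ in the averaging sum contributes a sign $\pm 1$ is false. By Proposition \ref{prop-pairing}, the elementary contribution at a single branch point is $\pm\tfrac12$, and when the segments $(x,\alpha x)$ and $(gy,g\beta y)$ overlap along only part of $A_\alpha\cap gA_\beta$, or meet a fundamental-domain endpoint, the resulting pairing is a sum of several such halves and need not be $\pm 1$. Consequently your closing sentence — that after ``local bookkeeping'' both sums equal $\sum_p\epsilon_p\eta_p$ term by term — does not hold at the level of individual group elements $g$. What is missing is the paper's reorganization: the $g$'s must be grouped into double cosets $\langle\alpha\rangle g\langle\beta\rangle$ (these, not individual $g$'s, are in bijection with the points $p\in\alpha\cap\beta$), and for each coset one checks the single tiling identity
\[
\epsilon(A_\alpha,A_\beta) \;=\; \sum_{m,n\in\Z}(\alpha^n x,\alpha^{n+1}x)\cdot(\beta^m y,\beta^{m+1}y),
\]
which is where the fractional contributions from the branch points at the two ends of the overlap segment $A_\alpha\cap gA_\beta$ combine to give $\pm 1$. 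Without this grouping step the per-crossing sign comparison you invoke is not available, so the proof is incomplete as written.
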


Explicitly, $\Psi^*(x,\alpha x)=d_v\log t_\alpha$ for all $\alpha \in \pi_1(S)$ and any branch point $x\in A_\alpha$. Indeed for all $\phi\in T_v\ML$:  $\Psi(\phi)(x,\alpha x)=\frac{1}{2}\phi(d(x,\alpha x))=\phi(t_\alpha)=d_v\log(t_\alpha)(\phi)$.

\begin{proof}
Let $\alpha,\beta \in \pi_1(S)$ represent two simple curves in $S$. We must prove that $\{t_\alpha,t_\beta\}_v=\langle \pi_v,d_v\log t_\alpha\wedge d_v\log t_\beta\rangle$ equals $(x,\alpha x)\cdot_\pi(y,\beta y)$ for $x\in A_\alpha$ and $y\in A_\beta$.
If $i(\alpha,\beta)=0$ then both quantities are null.
Otherwise, put $\alpha \cup \beta$ in taut position.

We first compute the sum defining $\{t_\alpha,t_\beta\}_v$, in which every intersection $p \in \alpha\cap \beta$ contributes to a term $\epsilon_p(t_{\alpha_p\beta_p}-t_{\alpha_p\beta_p^{-1}}) t_\alpha^{-1}t_\beta^{-1}\bmod{\Mm_v}$. 
The set $\alpha\cap \beta$ is in bijection with pairs of intersecting lifts $(\tilde{\alpha},\tilde{\beta})\subset \tilde{S}\times \tilde{S}$ modulo the diagonal action of $\pi_1(S)$.
%
%Fixing a pair $\alpha_0,\beta_0$ of reference lifts, every pair is represented by some $(\alpha_0,g\beta_0)$ for a unique $g\in \langle \alpha\rangle\backslash\pi/\langle\beta\rangle$. 
%
These lifts correspond bijectively to axes of the form $(A_{\tilde{\alpha}}, A_{\tilde{\beta}})$ in $T$ through the equivariant map $f:\tilde{S}\to T$ which preserves the cyclic orders on the boundaries. %, and the lifts intersect if and only if the corresponding axes intersect.
Fixing representatives $\alpha,\beta \in \pi_1(S)$, every such pair is represented by some $(A_\alpha,gA_\beta)$ for a unique $g\in \langle \alpha\rangle\backslash\pi/\langle\beta\rangle$. 
Using again Proposition \ref{Prop_Paulin}, we can rewrite 
\begin{equation}\label{inter-axes}
\{t_\alpha,t_\beta\}_v=\sum_{g\in \langle \alpha\rangle\backslash\pi/\langle \beta\rangle}\epsilon(A_{\alpha},gA_{\beta})
=\sum_{g\in \langle \alpha\rangle\backslash\pi/\langle \beta\rangle}\epsilon(A_{\alpha},A_{g\beta g^{-1}})
\end{equation}
where $\epsilon(A_\alpha,A_\beta)=\pm 1$ if $A_\alpha$ and $A_\beta$ are like in Figure \ref{or-axes} and $\epsilon(A_\alpha,A_\beta)=0$ in any other configuration. Notice that - as it should - this formula do not depend on the orientations of the axes, but on the local orientation of the tree at the branch points.

\begin{figure}[htbp]
    \centering
    \def\svgwidth{6cm}
    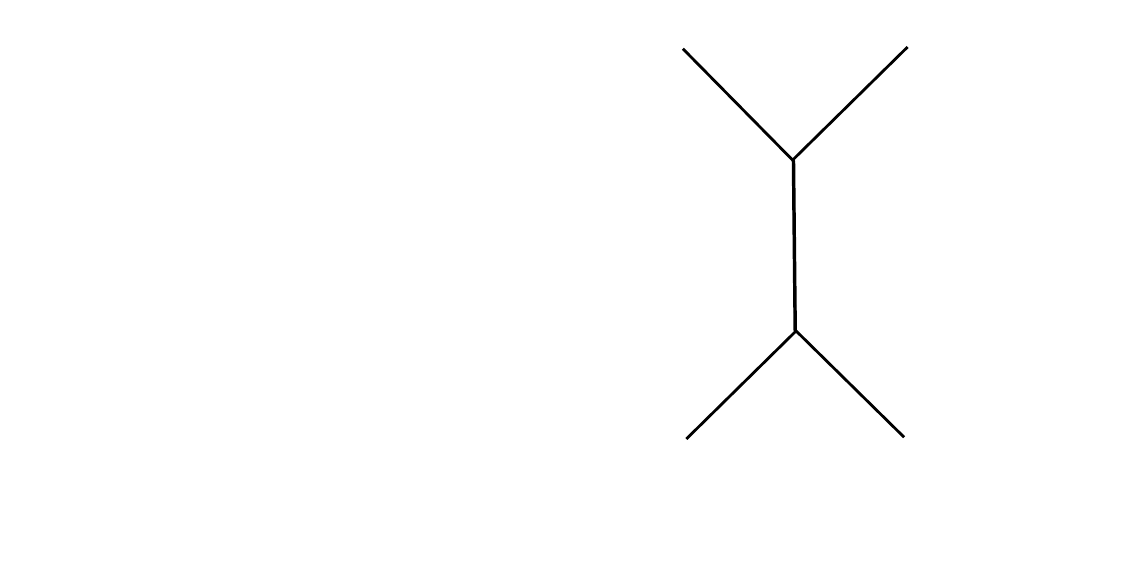
    \caption{Sign rule for the axes.}
    \label{or-axes}
\end{figure}

To end the proof, we fix $x\in A_{\alpha}$ and $y\in A_{\beta}$ to compare Formula \eqref{inter-axes} with $\sum_{g\in \pi} (x,\alpha x)\cdot(gy,g\beta y)$. Grouping them depending on the class of $g$ in $\langle \alpha\rangle \backslash\pi/\langle \beta\rangle$, we are reduced to the following equality, which is easily checked:
$$\epsilon(A_\alpha,A_\beta)=\sum_{m,n\in \Z} (\alpha^n x,\alpha^{n+1} x)\cdot(\beta^m y,\beta^{m+1} y).$$
\end{proof}

% \newpage

\section{Identifying the symplectic tangent models}\label{sectionthurston}

Let us recall Thurston's description for the tangent space to $\ML$ at a maximal measured lamination $\lambda$. We start with an orientation covering $p\colon S'\to S$, which is a ramified covering of degree $2$ with one ramification point in each triangle of the complement $S\setminus \lambda$, and such that the preimage $p^{-1}(\lambda)$ is naturally co-oriented. By the Gauss-bonnet theorem, the set $R$ of ramification points has $4g-4$ elements and the monodromy of the covering is a morphism $\rho\colon \pi_1(S\setminus R)\to\{\pm 1\}$ which is non-trivial around each ramification point. For later purposes, it will be useful to consider the orbifold $S^o$ where ramification points are thought as conical points of order $2$.

Let $H_1(S',\R)^-$ be the antisymmetric part of $H_1(S',\R)$ with respect to the involution of the covering: it supports a non-degenerate antisymmetric form obtained by restricting half the intersection form.
We shall refer to this symplectic space as Thurston's model for $T^*_\lambda\ML$.
We can avoid introducing the covering by considering instead the homology group $H_1(S^o,\R^-)$ with coefficients in the $\pi_1(S^o)$-module $\R$ together the action given by $\gamma.x=\rho(\gamma)x$. The twisted intersection product $H_1(S^o,\R^-)\times H_1(S^o,\R^-)\to H_0(S^o,\R)=\R$ coincides with the previous definition for Thurston's model. We will stick to this point of view in the sequel.

Let $T$ be a trivalent real tree endowed with a free minimal action of $\pi_1(S)$, which is dual to a measured geodesic lamination $\lambda$. %We denote by $S^o$ the orbifold corresponding to $\lambda$. %The orbits of branch points in $T$ correspond either to the connected components of $S\setminus\lambda$, or to the ramification locus $R$ of the covering $S'\to S$. In particular it has $4g-4$ elements. 
%Fix a point inside each triangle (for instance the center of the inscribed circle), to get a collection $R$ of $4g-4$ points in $S$. There exists a $1$-dimensional distribution $\xi$ on $S\setminus R$ which is everywhere tangent to the lamination, and extends to a singular distribution on $S$ with one prong singularity of order 3 in each triangle as in Figure \ref{fig:3-prong}, it is unique up to isotopy. The orientation covering of $\xi$ is a closed surface $S'$ together with a degree $2$ map $S'\to S$ which ramifies over $R$. We denote $S^o$ the orbifold quotient of $S'$ by the Galois involution $\tau$: it has underlying space $S$ and a subset $R$ of conical singularities of order $2$. % Its orbifold fundamental group $\pi_1(S^o)$ maps onto $\pi_1(S)$.
%\begin{figure}[htbp]
%    \centering
%    \includegraphics[width=5cm]{3-prong.png}
%    \caption{Integral curves of $\xi$ in a triangle}
%    \label{fig:3-prong}
%\end{figure}
In the next section we first recover a model for $S^o$ which depends only on $T$: our space will be an infinite dimensional CW-complex homotopic to $S^o$. As a consequence, its fundamental group is canonically attached to $T$ and its homology will be easy to compute from $T$. We will use it extensively to prove that the Bonahon model $\Bb(T)_\pi$ and Thurson model $H_1(S^o,\R^-)$ are naturally isomorphic symplectic vector spaces.

\subsection{A homotopical construction of the orbifold tree}

We first construct a space corresponding to the tree $T$ with an orbifold singularity of order $2$ at every branch point.
As the topology of $T$ induced by the metric is not given by a cell structure, our first task is to build a cellular model of $T$.

\paragraph{Intuition.}
Let us begin with the following analogy: suppose we wish to replace the real line $\R$ with its usual topology, by a CW-complex whose $0$-cells consists in the set $\Q$ of rationals with the discrete topology. We may first add a $1$-cell between every pair of distinct $0$-cells to make the space connected. This creates a $1$-cycle for every triple of distinct rational points, so we attach a $2$-cell to each of those in order to make the space simply connected. Now every $4$-tuple of rationals form the vertices of a $2$-cycle, to which we attach a $3$-cell, and so on.
What we obtain in the limit is Milnor's join construction $E\Q$, which is a space homotopic to $\R$ endowed with a free and proper action of $\Q$. 

We shall play a similar game, replacing $\R$ by the real tree $T$, and $\Q$ by its set of branch points $V(T)$. We first attach a $1$-cell to every pair of distinct branch points. However, we close the triangle $(x,y,z)$ only if $x,y,z \in V(T)$ belong to a same geodesic in $T$. Then we go on similarly in higher dimensions, so that our space will resemble $E\Q$ in restriction to any geodesic of $T$. At this stage, we have a space on which $\pi_1(S)$ acts freely and properly. As it is contractible, its quotient by $\pi_1(S)$ is homotopic to $S$.
Next comes the orbifold singularity: in homotopy theory, this is represented by a $K(\Z/2,1)$-space, that is $\R\P^\infty$. It remains to blow up the preceding construction at every branch point and insert an infinite dimensional space. This construction may look complicated but we shall do it in one shot and few lines below.

\paragraph{Construction.}
A \emph{half-edge} of $T$ is a pair $(x,h)$ consisting in a branch point $x$ of $T$ and a connected component $h$ of $T\setminus\{x\}$; we sometimes just write $h$ as it determines $x$. Let us construct a CW-complex $T^o$ whose $0$-skeleton is the set of half-edges of $T$.
First, we attach a $1$-cell denoted $(h,k)$ between every pair of half-edges incident to the same branch point $x\in V(T)$. 
Now at every branch point $x$, the incident half-edges $h_1,h_2,h_3$ form a triangle homeomorphic to $\R\P^1$ through which we attach a copy of $\R\P^\infty$.
For the moment, $T^o$ is a disjoint union of infinite projective spaces indexed by the set of branched points $V(T)$, we call it the \emph{orbifold part}.

Now, we add a \emph{connecting part}, as suggested in Figure \ref{fig:CW}. 
%
% We now add to $T^o$ \emph{connecting part} modeling the geodesics in $T$. For every pair of distinct branch points $x,y\in V(T)$, we attach the boundary of a $1$-cell $(x,y)$ to the pair half-edges $(x,h)$, $(y,k)$ such that $e$ and $f$ contain the unique geodesic between $x$ and $y$.
%
%Finally, we add to $T^o$ a \emph{connecting part}. %by increasing dimensions from $1$ to $\infty$.
%
Fix $\epsilon>0$ small enough, say $1/3$. Consider a finite set $W$ of branch points $\{x_0, \dots, x_n\}$ aligned on a geodesic of $T$, and denote $h_i,k_i$ the half-edges incident to $x_i$ containing (a non-empty) part of that geodesic. %, which we orient for convenience so that we may order the branch points $x_0, \dots, x_n$. 
The $n$-cell $\NewSet_W=\{(r_x)_{x\in W}\in [0,1-\epsilon]^W, \sum_{x\in W}r_x=1\}$ is a truncated simplex, and there is an obvious inclusion $\Delta_{W'}\subset \Delta_W$ when $W'\subset W$. The face of $\Delta_W$ truncated at $x_i$ corresponds to the set $\Delta_W^{x_i}$ of families $(r_x)$ satisfying $r_{x_i}=1-\epsilon$.
%
%Note $e_i$ and $f_i$ the $0$-cells corresponding to half-edges incident to $x_i$ containing $x_{i-1}$ and $x_{i+1}$ respectively (with $x_{n+1}=x_0$ and $x_{-1}=x_n$).
We attach $\Delta_W^{x_i}$ to the orbifold part of $T^o$ through the map $W\setminus\{x_i\}\to \{h_i,k_i\}$ sending the branch point $x_j$ to the half-edge based at $x_i$ which contains $x_j$, as in Figure \ref{fig:CW}. %
The $1$-cells $\Delta_{\{x,y\}}$ will be called edges and denoted $(x,y)$. % or $(e,f)$ where $e$ and $f$ are the half-edges incident to $x,y$ and containing $y$ and $x$ respectively.
\begin{figure}[htbp]
    \centering
    \includegraphics[width=8cm]{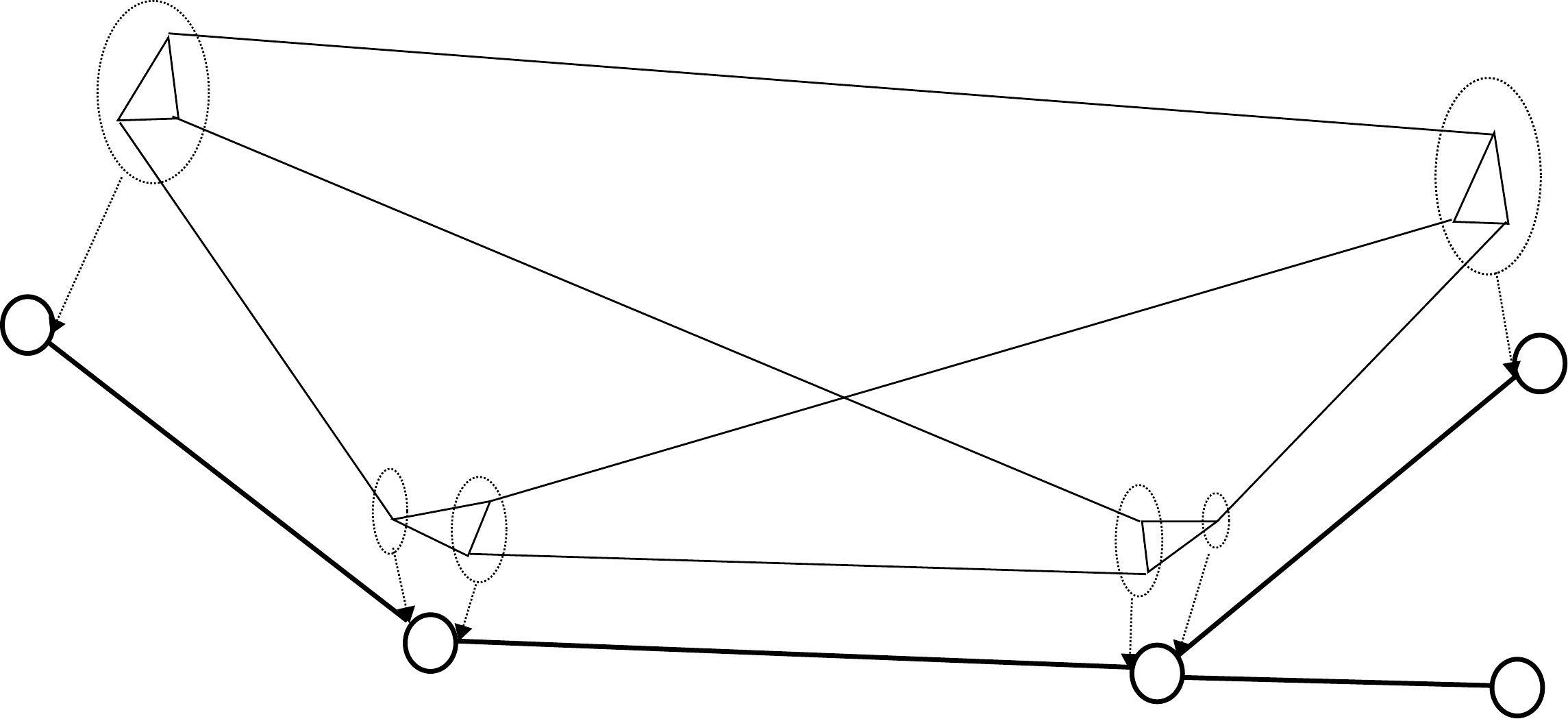}
    \caption{Attaching a 3-cell in $T^o$}
    \label{fig:CW}
\end{figure}

As promised, the action of $\pi_1(S)$ on $T^o$ is now proper so that we may form the quotient $\Sigma^o=T^o/\pi_1(S)$. The following lemma shows that $\Sigma^o$ and $S^o$ are homotopic. Interestingly, the proof consists in constructing an equivariant map $F:T^o\to \tilde{S}^o$, which plays the role of a (non-existing) retraction for the map $f:\tilde{S}\to T$. 

\begin{lemma}\label{equiv-homotopy}
Let $\tilde{S}^o$ be the covering of the orbifold $S^o$ corresponding to the kernel of the natural map $\pi_1(S^o)\to \pi_1(S)$. There exists a $\pi_1(S)$-equivariant map $F:T^o\to \tilde{S}^o$ which induces a homotopy equivalence between $\Sigma^o$ and $S^o$.
\end{lemma}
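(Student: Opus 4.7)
The plan is to first replace $\tilde S^o$ by a convenient CW-model. Each lift $\tilde r \in \tilde R$ of a ramification point carries a $\Z/2$-orbifold stabilizer; via a Borel-type construction, glue a copy of $\R\P^\infty$ at $\tilde r$ by attaching a small boundary circle around $\tilde r$ along the degree-$2$ map $S^1 \to \R\P^1$ (so that the meridian $\gamma_{\tilde r}$ represents the order-$2$ generator of the orbifold stabilizer). This yields a $\pi_1(S)$-equivariant CW-model of $\tilde S^o$, homotopy equivalent to it as an orbifold, in which each triangle $\tau_x \subset \tilde S$ of the complement of $\tilde \lambda$ carries a distinguished $\R\P^\infty$ at its center. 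One then constructs $F\colon T^o \to \tilde S^o$ by matching, for every branch point $x\in V(T)$, the $\R\P^\infty$ attached at $x$ in $T^o$ with the $\R\P^\infty$ attached at $\tilde r_x$ in $\tilde S^o$.

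\textbf{Equivariant cell-by-cell construction.} Fix a fundamental domain for the action of $\pi_1(S)$ on $V(T)$ and work on it first. For a chosen branch point $x$: send each of the three half-edges $h_1,h_2,h_3$ at $x$ to a point on the corresponding side of $\tau_x$; extend the triangle $\R\P^1=(h_1,h_2)\cup(h_2,h_3)\cup(h_3,h_1)$ to a small loop in $\tau_x$ that goes once around $\tilde r_x$, and then extend the attached $\R\P^\infty$ to the $\R\P^\infty$ at $\tilde r_x$ by a map realising $\pi_1 \simeq \Z/2$. For a collinear set $W=\{x_0,\dots,x_n\}\subset V(T)$, observe that $f^{-1}([x_0,x_n])\subset \tilde S$ is a contractible ``strip'' chaining the triangles $\tau_{x_0},\dots,\tau_{x_n}$ through leaves of $\tilde\lambda$; define $F$ on $\Delta_W$ inductively on $n$ by extending the already-defined restriction on the boundary $\partial \Delta_W = \bigcup_i \Delta_W^{x_i} \cup \bigcup_{W'\subsetneq W} \Delta_{W'}$ into this contractible target region. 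Finally spread the definition over $T^o$ using $\pi_1(S)$-equivariance.

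\textbf{Homotopy equivalence of the quotients.} Both $\Sigma^o = T^o/\pi_1(S)$ and $S^o$ are aspherical orbifolds: their orbifold universal covers are contractible (on the source, the connecting cells $\Delta_W$ assemble into a ``join-like'' contractible backbone, and each $\R\P^\infty$ unfolds to a contractible $S^\infty$ in the orbifold cover; on the target, the universal orbifold cover of $\tilde S^o$ unfolds to a tree of copies of $\H^2$). By Whitehead it suffices to check that $\bar F\colon \Sigma^o\to S^o$ induces an isomorphism on orbifold fundamental groups. Van Kampen applied to the cell decomposition of $\Sigma^o$ expresses $\pi_1^{\mathrm{orb}}(\Sigma^o)$ as an extension of $\pi_1(S)$ by one involution $\gamma_x$ per $\pi_1(S)$-orbit of branch points of $T$, subject to the relations encoded by the collinearity simplices; by construction $\bar F_*$ carries each $\gamma_x$ to the $\Z/2$-meridian around the corresponding ramification point of $S$, recovering the standard orbifold presentation of $\pi_1^{\mathrm{orb}}(S^o)$.

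\textbf{Main obstacle.} The technical heart lies in the inductive extension step for the truncated-simplex cells $\Delta_W$: one must ensure that the restrictions on the faces $\Delta_W^{x_i}$, which are forced by the inductive hypothesis applied to smaller subsets, fit together coherently around $\partial\Delta_W$ and admit a continuous extension inside the target strip $f^{-1}([x_0,x_n])$. The key geometric input making this possible is that $f$-preimages of finite subtrees of $T$ are contractible subsets of $\tilde S$, which ultimately reflects that $f\colon \tilde S \to T$ is a ``tree-like quotient'' of $\H^2$ collapsing each component of $\tilde S\setminus\tilde\lambda$ and each leaf of $\tilde\lambda$ to a point. A secondary subtlety is checking that the homotopy-equivalence argument on the orbifold $\pi_1$ identifies the generators with the correct meridians, which is where the degree-$2$ attachment $S^1 \to \R\P^1$ in the construction of $T^o$ plays its decisive role.
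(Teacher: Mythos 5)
Your proposal follows the paper's construction closely: both model $\tilde{S}^o$ by drilling out disks around the lifted ramification points and gluing in copies of $\R\P^\infty$, both build $F$ by first matching the orbifold parts branch-point by branch-point (sending each triple of half-edges at $x$ to the three marked points on the inscribed circle of the corresponding ideal triangle), both extend over the connecting cells using the dual-tree map $f\colon \tilde{S}\to T$, and both conclude by showing $T^o$ and $\tilde{S}^o$ are aspherical with $F_*$ an isomorphism on $\pi_1$. The only stylistic divergence at the end is that you invoke van Kampen on $\Sigma^o$ directly, whereas the paper applies the five lemma to the diagram of extensions $0\to \pi_1(T^o)\to\pi_1(\Sigma^o)\to\pi_1(S)\to 0$ and its counterpart for $S^o$, reducing to the identification of $\pi_1(T^o)$ and $\pi_1(\tilde{S}^o)$ with the same free product of $\Z/2\Z$'s indexed by $V(T)$; these amount to the same thing.

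One step is stated imprecisely and as written would not compile into a correct argument: in the inductive extension over a truncated simplex $\Delta_W$, you say to extend $F|_{\partial\Delta_W}$ "into this contractible target region $f^{-1}([x_0,x_n])$." But $F$ does not take values in $\tilde{S}$; it takes values in $\tilde{S}^o$, where the open disks $C_{x_i}$ inside the strip have been removed and replaced by copies of $\R\P^\infty$. The relevant portion of $\tilde{S}^o$ above the geodesic is therefore \emph{not} contractible — indeed, the truncation faces $\Delta_W^{x_i}$ are forced by the construction to land in the orbifold part, outside the raw strip. The contractibility of $f^{-1}([x_0,x_n])\subset\tilde{S}$ is the right geometric intuition, but the actual reason the extension exists is more local, and is what the paper spells out for $2$-cells: after homotoping the boundary paths off the circles, the resulting loop bounds a disk in the drilled strip because the region it encloses together with the arc on $C_z$ contains no further circle. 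You should either make this local bounding argument explicit, or formulate the obstruction-theoretic claim correctly (e.g.\ that the relevant relative homotopy groups of the pair (drilled strip, its intersection with the circles) vanish). The paper also leaves the extension to higher cells informal ("this procedure can be continued"), so this is a shared loose end rather than a defect unique to your write-up — but the specific sentence claiming the target is contractible is not accurate and should be repaired.
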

\begin{proof}

To define $F$, represent $T$ as the dual tree to a measured geodesic lamination $\lambda$ and consider the collection of circles inscribed in each triangle of the complement $S\setminus\lambda$: they lift to a collection of circles $C_x$ in $\tilde{S}\simeq \H^2$ indexed by $x\in V(T)$.
Moreover, the half-edges incident to $x$ correspond bijectively to the three intersection points of $C_x$ with the leaves of the lamination, see Figure \ref{fig:equivmap}.

\begin{figure}[htbp]
    \centering
    \includegraphics[width=6cm]{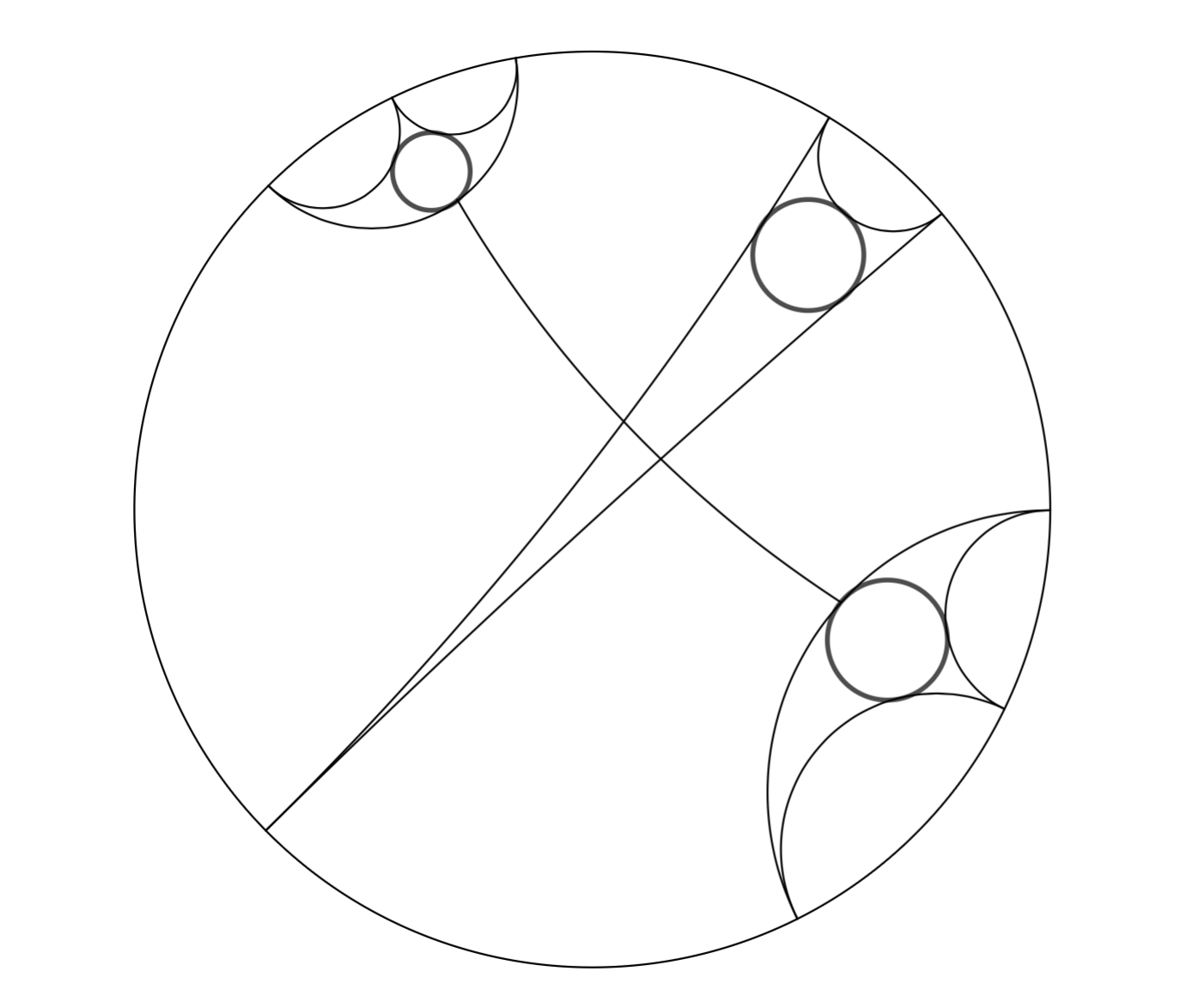}
    \caption{Lifting a geodesic to $\H^2$ (done with Geogebra)}
    \label{fig:equivmap}
\end{figure}
%
%(Notice that since $\lambda$ is filling, the discs interior to the $C_x$ have disjoint neighborhoods, and the minimal hyperbolic distance between two of them can be read in $S$.)
%
The covering $\tilde{S}^o$ is obtained from $\H^2$ by drilling out the interior of $C_x$ and gluing back a copy of $\R\P^\infty$ along $\R\P^1\simeq C_x$ for every $x\in V(T)$. By construction, the orbifold $S^o$ is homotopic to the quotient $\tilde{S}^o/\pi_1(S)$. 

We now proceed to the construction of an equivariant map $F \colon T^o\to \tilde{S}^o$. There is already an identification between the orbifold parts of both spaces, so that we are left to define the map $F$ on the connecting part. 

For every pair $(x,y)\in V(T)^2$, we must define a path $F(x,y)$ in $\tilde{S}^o$ connecting the points of $C_x$ and $C_y$ identified to the end points $h_x,h_y$ of $(x,y)$ in $T^o$. A first guess would be to consider the geodesic path $\gamma$ between the points $h_x$ and $h_y$. This path actually projects to the geodesic joining $x$ to $y$ in $T$. However it may intersect a forbidden circle $C_z$, in which case it enters its circumscribed ideal triangle $\Delta_z$ by one side and leaves it by another. Call $p_z$ the ideal vertex at the intersection of these two sides. 
We can homotope $\gamma$ inside $\Delta_z$ to a path avoiding $C_z$ which stays on the side containing $p_z$, see Figure \ref{fig:equivmap}.

%\textcolor{red}{Pourquoi de ce côté et pas de l'autre ? Je ne penses pas qu'on ait le choix sur le côté, et d'ailleurs si on fait certaines alternances gauche droite entre les triangles il faut couper le cercle inscrit...}
%
%This path will indeed belong to $f^{-1}([x,y])$ where $[x,y]$ denotes the geodesic joining $x$ to $y$ and will avoid the inscribed circles included in that region. 
%
%Let $U$ be the set of vertices lying between $x$ and $y$. These vertices belong to two disjoint classes $U=U_L\cup U_R$: a branch point $u$ belongs to $U_L$ (resp. $U_R$) if the branch leaving $u$ turns left when moving from $x$ to $y$. We define $F(x,y)$ to be any transverse path in $f^{-1}([x,y])$ which separate the collection of circles $U$ accordingly to this decomposition. 

Moreover, we can choose those paths in such a way that $F$ is $\pi_1(S)$-equivariant. Let us now consider a triple of points $x,z,y$ lying on a geodesic of $T$ in that order. We have defined $F(x,z)$, $F(z,y)$ and $F(x,y)$: it is not hard to see that the region enclosed by the three arcs and the boundary of $C_z$ does not contain any other circle, hence it can be filled by a triangle: this extends $F$ to the $2$-skeleton of $T^o$. 
This procedure can be continued to define an equivariant map $F:T^o\to \tilde{S}^o$, which induces a map $\overline{F}:\Sigma^o\to S^o$.

We would like to show that $\overline{F}$ is a homotopy equivalence. The space $\tilde{S}^o$ is Eilenberg-MacLane and Lemma \ref{retraction} below shows that so is $T^o$, hence it is sufficient to prove that $\overline{F}$ induces an isomorphism between fundamental groups.
Behold the following commutative diagram, and observe that the five lemma reduces the statement to showing that $F_*$ is an isomorphism.
$$\xymatrix{
0\ar[r]& \pi_1(T^o)\ar[d]^{F_*}\ar[r] & \pi_1(\Sigma^o)\ar[r]\ar[d]^{\overline{F}_*}& \pi_1(S)\ar[d]^=\ar[r]& 0 \\
0\ar[r]& \pi_1(\tilde{S}^o)\ar[r] & \pi_1(S^o)\ar[r]& \pi_1(S)\ar[r]& 0 
}$$
This last statement is clear from the fact that $\pi_1(T^o)$ and $\pi_1(\tilde{S}^o)$ are both isomorphic to a free product of copies of $\Z/2\Z$ indexed by $V(T)$ (see again Lemma \ref{retraction}). 
%As $T^o$ is an Eilengerg-Maclane space, the same is true for $\Sigma^o$ hence it is sufficient to show that $\pi_1(\Sigma^o)$ and $\pi_1(S^o)$ are isomorphic groups. 
%Fix a hyperbolic structure on $S$ and represent $T$ as the dual tree of a measured lamination $\lambda$ on $S$. Denote by $f$ the quotient map $f:\tilde{S}\simeq \H^2\to T$. 
%Denote by $\tilde{S}^o$ the covering of $S^o$ corresponding to the morphism $\pi_1(S^o)\to \pi_1(S)$. 
%It is sufficient to show that there is an isomorphism $\pi_1(\tilde{S}^o\simeq \pi_1(T^o)$ which is $\pi_1(S)$-equivariant up to conjugation (détailler...). 
%For any finite subset $W\in V(T)$, we can embedd the tree $T'_W$ inside $\H_2$ by sending each cycle to the corresponding inscribed circle. It suffices to lift the edges between closest neighbours to arcs transversal to the lamination...
\end{proof}

\subsection{Homology of $T^o$}

The homology of $T^o$ can be computed from its finite sub-complexes, which are easy to understand thanks to the following lemma. For a finite set $W\subset V(T)$, let $T^o_{(W)}$ to be the union of cells involving $W$ only: a cell belongs to $T^o_{(W)}$ when all its $0$-faces are of the form $(x,h)$ for $x\in W$. 
We define $T^o_W$ to be the sub-complex of $T^o_{(W)}$ whose connecting part reduces to the $1$-cells $(x,y)$ for $x,y \in W$ such that there is no other element in $W$ on the geodesic joining them. In more intuitive terms, $T^o_W$ is a collection of $\R\P^\infty$ indexed by $W$, connected in a tree-like fashion given by the embedding of $W$ in $T$. 

\begin{lemma}\label{retraction}
For all finite $W\subset V(T)$, the cell-complex $T^o_{(W)}$ retracts by deformation on $T^o_W$ .
\end{lemma}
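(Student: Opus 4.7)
The extra cells in $T^o_{(W)}\setminus T^o_W$ are, by construction, precisely the cells $\Delta_V$ indexed by collinear subsets $V\subset W$ of cardinality $\ge 2$ which are not elementary edges --- meaning either $|V|\ge 3$, or $V=\{x,y\}$ contains another $W$-point strictly between $x$ and $y$ on the geodesic of $T$. The plan is to retract these extra cells onto $T^o_W$ via a sequence of elementary cellular collapses, treating the problem through the lens of discrete Morse theory.

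My first step would be to reduce to the case where $W$ lies on a single geodesic of $T$. Since every collinear subset $V\subset W$ is contained in some maximal collinear subset (a "linear chain") of $W$, the extra cells partition naturally according to which maximal chain their indexing set lies in. Two extra cells associated to distinct maximal chains can only share cells already in $T^o_W$, namely the $\R P^\infty$'s at branch points common to both chains. Hence the deformation retractions constructed independently on each chain glue into a global deformation retraction.

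In the collinear case $W=\{x_0,\ldots,x_n\}$ ordered along the geodesic, the connecting part of $T^o_{(W)}$ is the truncated $n$-simplex $\Delta_W$ together with all its sub-simplex faces $\Delta_V$ (attached to the $\R P^\infty$'s at each $x_i$ through the truncated corners), while the connecting part of $T^o_W$ is the path $P=\bigcup_i \Delta_{\{x_i,x_{i+1}\}}$. Abstractly, both $\Delta_W$ and $P$ are contractible, and $P\hookrightarrow \Delta_W$ is a cofibration of CW pairs, so a deformation retraction exists at the level of truncated simplices. I would realize it cell by cell by a discrete Morse matching: for each extra $\Delta_V$ with $|V|\ge 3$, pair it with a specific face $\Delta_{V\setminus\{y\}}$ (say for $y=\min V$); for each non-elementary edge $\Delta_{\{x_i,x_j\}}$, pair it with a suitably chosen triangle $\Delta_{\{x_i,x_k,x_j\}}$ whose presence was not already used above. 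Acyclicity of the resulting matching on the poset of extra cells can be verified by induction on $n$ using the lexicographic order on the indexing subsets.

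The main obstacle is the compatibility of this combinatorial retraction with the attachments to the $\R P^\infty$'s at branch points. Observe that at the extremal points $x_0$ and $x_n$, all truncated corners $\Delta_V^{x_0}$ (resp.\ $\Delta_V^{x_n}$) collapse to the single half-edge along the geodesic, since all other points of $V$ lie on the same side; at an interior $x_i$, the truncated corners of all the relevant $\Delta_V$'s attach simplicially to the $1$-simplex in $\R P^\infty$ spanned by the two geodesic half-edges at $x_i$. Since these target regions are contractible subcomplexes of $\R P^\infty$, any movement of truncated corners induced by the collapses can be accommodated inside them, ensuring the retraction is consistent on the full complex $T^o_{(W)}$ and terminates exactly at $T^o_W$.
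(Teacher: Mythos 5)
Your proposal has two concrete gaps.

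\textbf{Gap 1: the reduction to a single chain does not hold as stated.} You claim that extra cells associated to distinct maximal chains can only share cells already in $T^o_W$, namely the $\R\P^\infty$'s. This is false. Take $W$-points $a,m,b$ in this order on a geodesic, and suppose the tree branches at $b$ with further $W$-points $p$ and $q$ on two different components past $b$. Then $\{a,m,b,p\}$ and $\{a,m,b,q\}$ are distinct maximal chains of $W$, and they share the collinear subset $\{a,m,b\}$; in particular the $2$-cell $\NewSet_{\{a,m,b\}}$, which is an extra cell, is a common face of both top cells. So the extra cells do \emph{not} partition by maximal chain, and the gluing of chain-by-chain retractions needs an actual compatibility argument, not just the disjointness you invoke.

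\textbf{Gap 2: the Morse matching is not a matching.} The rule pairing $\NewSet_V$ (for $|V|\ge 3$) with $\NewSet_{V\setminus\{\min V\}}$ gives $\NewSet_V$ two partners whenever $W$ contains a point $v$ lying before $\min V$ on the chain: $\NewSet_V$ is the bottom of the pair $(\NewSet_{V\cup\{v\}},\NewSet_V)$ and simultaneously the top of $(\NewSet_V,\NewSet_{V\setminus\{\min V\}})$. So the rule is self-conflicting before the ``suitably chosen triangle'' rule even enters, and the latter rule (``whose presence was not already used above'') is too underspecified to check, as is the asserted acyclicity. A correct matching on the face poset of a simplex is the classical one: fix a single vertex $v_0$ and pair $\NewSet_V\leftrightarrow\NewSet_{V\cup\{v_0\}}$ according as $v_0\notin V$ or $v_0\in V$; your rule is not of this form.

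The paper takes a more elementary route that sidesteps both problems: it inducts on the dimension of the top-dimensional truncated simplices. For each maximal collinear $U$ with extreme points $x_0,x_n$, it pushes $\NewSet_U$ onto the union of faces $\NewSet_{U'}$ with $U'$ not containing both $x_0$ and $x_n$. The key point making the induction glue is that two distinct maximal $\NewSet_{U_1},\NewSet_{U_2}$ can never share a face containing both extremes of either one (maximality forces $U_1=U_2$ if $U_1\cap U_2$ contained both extremes of $U_1$), so the retraction of one top cell fixes every face it shares with another. The compatibility with the $\R\P^\infty$ attachments that you rightly flag as a concern is handled there because a truncated corner $\NewSet_U^{x_i}$ is sent to the corners $\NewSet_{U'}^{x_i}$ of the target faces, and these all attach to the same $1$-cell $(h_i,k_i)$ by the same rule.
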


\begin{proof}
We define the retraction by induction on the maximal dimension of the truncated simplices $\Delta_U\subset T^o_{(W)}$. % with maximal dimension, and thereby decrease the dimension down to $1$. 
Let $U =\{x_0,\dots,x_n\}$ correspond to one of them, it is the intersection of $W$ with a geodesic in $T$. We retract $\Delta_U$ by deformation onto the union of $\Delta_{U'}$ for $U'\subset U$ ranging over all subsets which do not contain both $x_0$ and $x_n$. % The number of cells $\Delta_U$ with dimension $>1$ has decreased, and we keep going until all such truncated simplices have dimension one.
This procedure stops when $U=\{x,y\}$ and $x,y$ are closest neighbours in $W$.
\end{proof}

%This lemma shows that $\pi_k(T^o)=0$ for all $k>1$. Indeed, any map $f\colon \mathbb{S}^k\to T^o$ lands into a subcomplex $T^o_W$ for some $W$, which retracts on a finite bouquet of $\R\P^\infty$, whose higher homotopy groups are trivial.

%The group $\pi_1(S)$ acts freely and properly on $T^o$, and we write $\Sigma^o=T^o/\pi_1(S)$.

%In this technical section, we set $\pi=\pi_1(S)$. %Observe that $\pi$ acts on the cell-complex $T^o$, freely and properly. The group $\Pi=\pi_1(T^o/\pi)$ fits in the exact sequence:
%\begin{equation}\label{ext-arbre}
%1\to \pi_1(T^o)\to \Pi\to \pi\to 1.
%\end{equation}
%
% We define a $1$-cochain $\rho\in C^1(T^o,\{\pm 1\})$ in the following way: $\rho(h,k)=-1$ for any half-edges $h,k$ incident to a same vertex. The other $1$-cells come from truncated simplices $\Delta_U$ and can be of two types: for $1$-cells $e$ joining different vertices in $U$ we put $\rho(e)=-1$ and for the others, we define $\rho$ consistently with the previous definition. Hence $\rho(e)=-1$ if and only if their ends are attached to different half-edges. 
%
We define a $1$-cochain $\rho\in C^1(T^o,\{\pm 1\})$ sending every $1$-cell of $T^o$ to $-1$. It is a cocycle because the $2$-cells of $T^o$, being either hexagons (orbifold part) or squares (contained in some $\Delta_W$ for $W$ of cardinal $3$), have an even number of $1$-faces. %a cocycle since all $2$-faces of truncated simplices $\Delta_U$ have an even number of edges.%, and being invariant under $\pi$ it descends to a class in $\rho \in Z^1(S^o,\{\pm 1\})$. 
%
%This corresponds to a homomorphism $\rho \colon \Pi \to \{\pm 1\}$, and we denote $\R^-$ the $\Pi$-module defined by $\gamma x=\rho(\gamma)x$.
The geometric idea underlying this definition is that any half-edge stands for a local coorientation of the lamination $\tilde{\lambda}$, say pointing to the closest singular point. Following an edge $e$ in $T^o$ (transverse to $\tilde{\lambda}$), we arrive at the other end with the opposite co-orientation, giving $\rho(e)=-1$.

This cocycle defines a homomorphism $\rho:\pi_1(T^o)\to \R$ and we denote by $\R^-$ the vector space $\R$ with the action $\gamma.x=\rho(\gamma)x$. Our first task is to compute the homology of $T^o$ with coefficients in $\R$ and $\R^-$. 

\begin{lemma}\label{homology-arbre}
We have $H_k(T^o,\R)=0$ if $k\ne 0$, $H_k(T^o,\R^-)=0$ if $k\ne 1$ and $$H_1(T^o,\R^-)\simeq \Bb(T).$$ 

%The vector space $H_1(T^o,\R^-)$ is isomorphic to the space $\Bb(T)$ generated by pairs $(x,y)\in V(T)^2$ with $x\ne y$ modulo the relations $(y,x)=(x,y)$ and $(x,z)=(x,y)+(y,z)$ whenever $y$ belongs to the geodesic from $x$ to $z$.
%\item[-] $H_2(T^o,\R^-)\simeq \R^{(V(T))}$.
%\item[-] $H_k(T^o,\R^-)=0$ for $k>2$
%\end{enumerate}
\end{lemma}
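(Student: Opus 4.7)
The plan is to reduce the computation of $H_*(T^o,\R^{\pm})$ to that of finite subcomplexes via Lemma \ref{retraction}, recognize these subcomplexes as homotopy equivalent to wedges of projective spaces, compute group homology of free products of $\Z/2$, and finally match the result with $\Bb(T)$ through a direct geometric construction.

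First, write $T^o=\bigcup_W T^o_{(W)}$ as a filtered union over finite subsets $W\subset V(T)$. The subsets $W$ that are \emph{convex}, meaning containing along with any $x,y\in W$ every branch point on the geodesic $[x,y]$, form a cofinal subfamily. For such $W$, the nearest-neighbor edges of $T^o_W$ form a tree with exactly $|W|-1$ edges, namely the combinatorial spanning tree of $W$ induced by the metric on $T$. Lemma \ref{retraction} gives a deformation retraction $T^o_{(W)}\simeq T^o_W$; collapsing further the contractible tree of connecting edges yields a homotopy equivalence $T^o_W \simeq \bigvee_{x\in W}\R\P^\infty$. In particular $T^o_W$ is a $K(\pi,1)$ with $\pi\simeq *_{x\in W}\Z/2$, and since homology with local coefficients commutes with filtered colimits, it suffices to compute $H_*(T^o_W,\R^{\pm})$ for convex $W$ and take the colimit.

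For trivial coefficients, the vanishing of reduced rational homology of $\R\P^\infty$ together with the wedge formula immediately gives $H_k(T^o_W,\R)=0$ for $k\geq 1$. For $\R^-$, the cocycle $\rho$ sends each $\Z/2$-generator to $-1$, and since $1-\sigma$ acts as $\times 2$ (invertible over $\R$) we have $H_k(\Z/2,\R^-)=0$ in every degree. Apply the Mayer--Vietoris sequence of the free product $(*^{n-1}\Z/2)*\Z/2$ inductively: in degree $k\geq 2$ it gives immediate vanishing, and in low degrees the relevant segment reads
\[
0 \to H_1(*^{n-1}\Z/2,\R^-) \to H_1(*^n\Z/2,\R^-) \to \R \to 0 \to H_0(*^n\Z/2,\R^-) \to 0,
\]
the $\R$ coming from $H_0$ of the trivial group. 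Starting from $H_*(\Z/2,\R^-)=0$, induction yields $H_1(T^o_W,\R^-)\simeq \R^{|W|-1}$ and vanishing in other degrees.

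Finally, identify this with $\Bb(T)$. For convex $W$, the subspace $\Bb_W \subset \Bb(T)$ generated by pairs from $W$ is freely generated by the $|W|-1$ nearest-neighbor pairs along the spanning tree, since all other relations of $\Bb$ follow by repeated subdivision along this tree. Define $\Psi_W:\Bb_W\to H_1(T^o_W,\R^-)$ sending each nearest-neighbor pair $(x,y)$ to the explicit twisted $1$-cycle built from the connecting edge $(x,y)$ together with the unique (up to scalar) combinations of orbifold edges at $x$ and at $y$ needed to cancel its twisted boundary at the half-edge vertices. A direct computation shows that such a cycle exists, is nontrivial, and realises the new $\R$-summand added to $H_1$ by the Mayer--Vietoris connecting map when the $\Z/2$-factor at $y$ is attached to the preceding factors. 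Since both sides have dimension $|W|-1$ and $\Psi_W$ sends a basis to nonzero independent classes, it is an isomorphism.

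The main obstacle is verifying compatibility of the $\Psi_W$'s under enlargements $W\subset W'$ of convex subsets. If a new $y\in W'\setminus W$ subdivides a former nearest-neighbor edge $(x,z)$ of $W$, one must show that the truncated $2$-simplex $\Delta_{\{x,y,z\}}\subset T^o_{(W')}$ has twisted cellular boundary realising the relation $(x,z)-(x,y)-(y,z)=0$ in $H_1(T^o_{W'},\R^-)$. This requires a careful computation of $\partial\Delta_{\{x,y,z\}}$: the hexagonal boundary has three ``long'' sides that are the connecting edges $(x,y),(y,z),(x,z)$ and three ``short'' sides lying inside the orbifold $\R\P^\infty$'s at $x,y,z$, and one must check that the twisted-coefficient contributions from the short sides precisely match the auxiliary orbifold edges used in $\Psi_W(x,y),\Psi_W(y,z),\Psi_W(x,z)$. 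Granting this, the colimit of the $\Psi_W$'s yields the desired isomorphism $H_1(T^o,\R^-)\simeq \Bb(T)$.
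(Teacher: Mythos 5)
Your argument is correct, but the route through the twisted homology is genuinely different from the paper's. Both proofs begin the same way: filter $T^o$ by finite subcomplexes, invoke Lemma~\ref{retraction}, and observe that $T^o_W$ is (up to homotopy) a tree of infinite projective spaces, so $\pi_1(T^o_W)\simeq *_{x\in W}\Z/2$. After that the methods diverge. The paper passes to the double cover $T'_W\to T^o_W$ determined by $\rho$: each $\R\P^\infty$ lifts to a contractible $S^\infty$ and each connecting edge lifts to two edges with opposite orientations, so $T'_W$ retracts onto an explicit graph, and $H_*(T^o_W,\R^\pm)$ is read off as the $\pm1$-eigenspaces of $H_*(T'_W,\R)$. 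This gives, for free, a concrete generating cycle $c(x,y)$ (the round trip from $x$ to $y$ in the graph) for each nearest-neighbour pair, and the compatibility $c(x,y)=c(x,z)+c(z,y)$ under subdivision is immediate from the picture; the identification with $\Bb(T)$ is then essentially tautological. You instead compute $H_*(*^n\Z/2,\R^-)$ algebraically: you note that $H_*(\Z/2,\R^-)=0$ because $1-\sigma=2$ is invertible over $\R$, and then run a Mayer--Vietoris induction over the free product to find $H_1(*^n\Z/2,\R^-)\cong\R^{n-1}$. This is correct and slightly more self-contained on the homological-algebra side, but it only produces the right \emph{dimension}; you then have to build the explicit twisted cycles $\Psi_W(x,y)$ by hand and verify, via the boundary of the truncated $2$-simplex $\Delta_{\{x,y,z\}}$, that they satisfy the subdivision relation. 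You correctly flag this last verification as the delicate point of your approach; it is exactly the step that the double-cover argument makes trivial, because there the generating cycles come with the relation built in. Your restriction to convex $W$ is a harmless cofinality refinement that is not needed in the paper's version but does make your colimit bookkeeping cleaner.
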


\begin{proof}
Observe that $T^o=\varinjlim T^o_{(W)}$ as $W$ exhausts the finite subsets of the countable set of branch points $V(T)$ and by Lemma \ref{retraction}, $T^o_{(W)}$ retracts by deformation on $T^o_W$, 
thus $H_*(T^o,\R^\pm)=\varinjlim H_*(T^o_W,\R^\pm)$.

%From Lemma \ref{retraction}, we know that $T^o_W$ retracts by deformation on a CW-complex $T'_W$ which is a union of $\R\P^\infty$ lying above each $x\in W$ connected by an edge whenever $x$ and $y$ are such that there is no other point $z\in W$ lying between $x$ and $y$ in $T$. Recall that the retraction $T^o_W \to T'_W$ was relative to the $1$-skeleton, so it is $\rho$-equivariant and identifies homology groups with values in $\R^-$ as well as those in $\R$.
We may forget about the cocycle $\rho$ while computing the untwisted real homology, and further retract the space $T^ o_W$ on a wedge of infinite projective spaces. Thus $H_0(T^o_W,\R)=\R$ and $H_k(T^o_W,\R)=0$ for $k>0$ so the same goes for $T^o$. % by taking the inductive limit.

We now return to the twisted homology of $T^o_W$. For this we consider the double cover $T'_W\to T^o_W$ corresponding to $\rho$ and compute the untwisted homology of the total space: it splits into the $\pm 1$-eigenspaces of the involution which coincide with $H_*(T^o_W,\R^\pm)$ respectively. 
The space $T'_W$ is homotopy equivalent to a graph with vertex set $W$, and two edges above each edge $e$ of $T^o_W$ connecting its end points with opposite orientations, as shown below.

\[\xymatrix{\bullet \ar@/_/[dr]& & &\bullet\ar@/_/[dl] \\
&x\ar@/_/[r]\ar@/_/[dl]\ar@/_/[ul]&y\ar@/_/[l]\ar@/_/[ru]& \\
\bullet\ar@/_/[ur] & & & }\]

It follows that $H_0(T'_W,\R)=\R$ and $H_k(T'_W,\R)=0$ if $k>1$.
Moreover $H_1(T'_W,\R)$ has a basis formed by the cycles $c(x,y)\in H_1(T'_W,\R)$ indexed by the edges $(x,y)$ of $T'_W$, which consist in making a round trip from $x$ to $y$, following the arrows. The Galois involution of $T'_W$ exchanges the orientation of $c(x,y)$, so $H_1(T^o_W,\R^-)$ is freely generated by pairs $(x,y)$ where $x,y$ are closest neighbours in $W$. 

Taking the limit as $W$ converges to $V(T)$, we obtain $H_k(T^o,\R^-)=0$ for $k=0$ and $k>1$. If an edge $(x,y)$ gets subdivided into $(x,z)$ and $(z,y)$ as $W$ increases, we have $c(x,y)=c(x,z)+c(z,y)$ which is compatible with the equality $(x,y)=(x,z)+(z,y)$, and provides the desired isomorphism for the inductive limit of $H_1(T^o_W,\R^-)$.
\end{proof}

\subsection{Homology of the quotient $\Sigma^o=T^o/\pi$}\label{homology-quotient}

Let us write $\pi=\pi_1(S)$ for short. The cocycle $\rho$ on $T^o$ is $\pi$-invariant, so it induces a morphism $\pi_1(\Sigma^o)\to \{\pm 1\}$ that we also denote $\rho$. The $\pi$-equivariant homotopy equivalence between $\Sigma^o$ and $S^o$ thus yields a morphism $\pi_1(S^o)\to\{\pm 1\}$. By the remark following Lemma \ref{retraction}, this morphism is the co-orientation monodromy of $\lambda$, so its kernel corresponds to the covering $S'\to S^o$.
Consequently, we may deduce the homology of $S^o$ with coefficients in $\R^{\pm}$ from that of $\Sigma^o$ with the same coefficients.

The $2$-fold covering $S'$ of $S^o$ ramified over $R$ satisfies $\chi(S')=2\chi(S)-(4g-4)=8-8g$ by the Riemann-Hurwitz formula. As $H_*(S^o,\R^\pm)=H_*(S',\R)^\pm$ we get that $H_*(S^o,\R)=H_*(S,\R)$ whereas $H_k(S^o,\R^-)=0$ if $k\ne 1$ and $\dim H_1(S^o,\R^-)=6g-6$.

On the other hand, we can compute $H_*(T^o/\pi,\R^{\pm})$ from $H_*(T^o,\R^{\pm})$ using the Cartan-Leray spectral sequence. Its second page is $E^2_{p,q}=H_p(\pi,H_q(T^o,\R^{\pm}))$ and converges to $H_{p+q}(\Sigma^o,\R^{\pm})$. Lemma \ref{homology-arbre} implies that, with both coefficients, the second page has only one line, whence the isomorphisms:
$$H_*(\Sigma^o,\R)=H_*(\pi,\R)=H_*(S,\R),\quad H_*(\Sigma^o,\R^-)=H_{*-1}(\pi,\Bb(T)).$$ 
This yields $H_1(\Sigma^o,\R^-)=H_0(\pi,\Bb(T))=\Bb(T)_\pi$ and $H_k(\pi,\Bb(T))=0$ for $k=1,2$. Observe that from Poincaré duality we get $H_2(\pi,\Bb(T))=H^0(\pi,\Bb(T))=\Bb(T)^\pi=0$. It is not surprising that $\Bb(T)$ has no invariant cycles as $\pi$ acts freely on $V(T)$. We do not have a similar explanation for the vanishing of $H^1(\pi,\Bb(T))$.

\subsection{Intersection form}

In the commutative diagram below, the first colon is a Galois covering of surfaces with group $\pi$. We have the identifications $H_1(\tilde{S}',\R)^-=H_1(\tilde{S}^o,\R^-)=H_1(T^o,\R^-)=\Bb(T)$ and $H_1(S',\R)^-=H_1(S^o,\R^-)=\Bb(T)_\pi$.

\[\xymatrix{ \tilde{S}'\ar[r]^{\tilde{p}}\ar[d]^\pi & \tilde{S}^o\ar[d]^\pi \\ S'\ar[r]^{p}& S^o}\]

\begin{proposition}\label{symplectomorphismes}
The isomorphisms $H_1(\tilde{S}',\R)^-=\Bb(T)$ and $H_1(S',\R)^-=\Bb(T)_\pi$ preserve the symplectic forms. 
\end{proposition}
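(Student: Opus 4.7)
My plan is to first establish the statement on the universal cover ($H_1(\tilde S',\R)^- \cong \Bb(T)$) and then deduce the statement on $S'$ by averaging. Since the cocycle $\rho$ on $T^o$ is defined precisely so as to pull back under $F$ to the co-orientation cocycle on $\tilde S^o$ (as discussed after Lemma~\ref{retraction} and in Section~\ref{homology-quotient}), the map $F \colon T^o \to \tilde S^o$ of Lemma~\ref{equiv-homotopy} lifts to a $\pi$-equivariant map $\tilde F \colon T' \to \tilde S'$ between the corresponding double covers. Under the isomorphism of Lemma~\ref{homology-arbre}, a generator $(x,y) \in \Bb(T)$ corresponds to the compactly supported cycle $c(x,y) \in H_1(T',\R)^-$ obtained as a round trip in a finite subcomplex $T'_W$, and I must compute $\tilde F_* c(x,y) \cdot \tilde F_* c(z,t)$ in $\tilde S'$ (well-defined since both cycles are compactly supported) and compare it with the Bonahon pairing of Proposition~\ref{prop-pairing}.

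By the same bilinear decomposition as in the proof of Proposition~\ref{prop-pairing} (Figure~\ref{fig:inter-cycles}), the verification reduces to two local configurations. First, when $[x,y]$ and $[z,t]$ are disjoint geodesics in $T$, I would use that each edge of $T^o$ is sent by $F$ into a path in $\tilde S^o$ contained in the union of ideal triangles meeting the underlying geodesic in $T$; hence disjoint geodesics yield cycles with disjoint supports in $\tilde S'$, and their intersection vanishes, matching case (1) of Proposition~\ref{prop-pairing}. Second, for pairs $(x,z)$ and $(z,y)$ with $z$ between $x$ and $y$, the cycles $\tilde F_* c(x,z)$ and $\tilde F_* c(z,y)$ meet only in the preimage of $C_z \subset \tilde S$. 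A local analysis at that ramification locus shows that the two cycles cross transversely at a single point whose sign is the cyclic order of the half-edges $(h_x,h,h_y)$ at $z$, reproducing after division by two the value $\epsilon/2$ prescribed by Proposition~\ref{prop-pairing}.

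For the downstairs statement, I would invoke the standard formula expressing the intersection pairing of two classes $[\alpha], [\beta] \in H_1(S',\R)$ as a sum $\sum_{g \in \pi} \alpha \cdot g\beta$ computed in $\tilde S'$, where $\alpha, \beta$ are compactly supported chain-level lifts projecting to the given classes. This is precisely the averaging procedure defining $\cdot_\pi$ on $\Bb(T)_\pi$, so combined with the upstairs calculation it gives the symplectic identification $H_1(S',\R)^- \cong \Bb(T)_\pi$. Non-degeneracy on both sides is then a consequence of Poincaré duality on the closed surface $S'$, which as a by-product settles the non-degeneracy claim left as an exercise after Proposition~\ref{prop-pairing}.

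The main obstacle is the local sign computation in the second case above. The sign $\epsilon$ in the Bonahon pairing is dictated by the cyclic order of the half-edges at a branch point of $T$, which in turn fixes both the attaching orientation of $\R\P^\infty$ inside $T^o$ and the orientation of the corresponding circle $C_z \subset \tilde S$. One must verify that the lift $\tilde F$ transports these orientation conventions compatibly, so that the algebraic transverse crossing in $\tilde S'$ is indeed $\epsilon$ and not $-\epsilon$. The cleanest approach is probably to pin down one explicit reference configuration, compute both signs there by hand, and then deduce the general case by equivariance of $\tilde F$ and by observing the behaviour of cyclic orders under the reflection exchanging the two sheets at a branch point.
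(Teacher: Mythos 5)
Your proposal follows essentially the same route as the paper's proof: you reduce via the bilinear decomposition of Proposition~\ref{prop-pairing} to the disjoint and consecutive cases, push cycles through the equivariant map $F$ into $\tilde S'$, check signs locally at the branch points, and deduce the quotient statement by averaging over $\pi$ (which the paper phrases as $H_1(S',\R)=H_1(\tilde S',\R)_\pi$ with the intersection form equal to the averaged one). The ``main obstacle'' you flag --- pinning down a reference configuration to reconcile the cyclic order at a trivalent branch point with the orientation of the transverse crossing in $\tilde S'$ --- is precisely what the paper resolves by the explicit local picture of Figure~\ref{fig:etoile}.
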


\begin{proof}
Let us begin with the first isomorphism. Recall that we defined an equivariant map $F:T^o\to \tilde{S}^o$: it sends the cell $(x,y)$ to a path $F(x,y)$ joining the orbifold points corresponding to $x$ and $y$ and avoiding all other orbifold points. As the homology of the orbifold part of $T^o$ with coefficients $\R^-$ vanishes identically, these paths actually define cycles in $H_1(\tilde{S}^o,\R^-)$, which in $H_1(\tilde{S}',\R)$ are represented geometrically by $c(x,y)=\tilde{p}^{-1}(F(x,y))\subset \tilde{S}'$. Notice that these cycles have a natural orientation (given by the co-orientation of the lifted lamination $\tilde{\lambda}'$).   

%To explain why the pairing coincides with the intersection on $H_1(\tilde{S}^o,\R^-)=H_1(\tilde{S}',\R)^-$, we need a few properties of the equivariant map $F:T^o\to \tilde{S}^o$ constructed in Lemma \ref{equiv-homotopy}.
%
%This map sends the geodesic segment $xy$ to a path $F(x,y)$, which in turn lifts to a cycle $c_{(x,y)}$ in the ramified double cover of $\tilde{S}^o$, and the intersection of those cycles corresponds to the pairing on $H_1(\tilde{S}^o,\R^-)$.
%
Recalling the definition of the pairing in $\Bb(T)$ given in Proposition \ref{prop-pairing}, it suffices to compute $c(x,y)\cdot c(z,t)$ in the case where $(x,y)$ and $(z,t)$ are disjoint or consecutive.

In the first case, the cycles $c(x,y)$ and $c(z,t)$ are also disjoint so their intersection vanishes.
In the second case, the cycles $c(x,z)$ and $c(z,y)$ only intersect in a neighborhood of $z$ which looks like in the right hand side of Figure \ref{fig:etoile}. The lifted cycles $c(x,z)$ and $c(z,y)$ go straight through the intersection point, oriented as shown. Analyzing the two possible cases, we find that the signs coincide.
\begin{figure}[htbp]
    \centering
    \includegraphics[width=4cm]{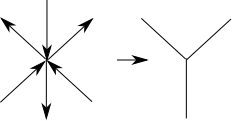}
    \caption{Double covering over a branching point}
    \label{fig:etoile}
\end{figure}

Let us now consider the quotient. We showed in Section \ref{homology-quotient} that $H_1(S',\R)=H_1(\tilde{S}',\R)_\pi$. The result follows from the fact that the intersection form on $H_1(S',\R)$ coincides with the averaged intersection form on $H_1(S',\R)$. 
\end{proof}

% \newpage

\bibliographystyle{alpha}
\bibliography{biblio-tex.bib}

\begin{thebibliography}{Mag81}

\bibitem[Bon96]{Bonahon_1996}
F.~Bonahon.
\newblock Shearing hyperbolic surfaces, bending pleated surfaces and
  {T}hurston's symplectic form.
\newblock {\em Ann. Fac. Sci. Toulouse Math. (6)}, 5(2):233--297, 1996.

\bibitem[CP20]{Conder-Paulin_Erratum-Gromov-R-trees_2020}
M.~Conder and F.~Paulin.
\newblock Discrete and free two-generated subgroups of {$SL_2$} over
  non-archimedean local fields, {A}ppendix {A}., 2020.

\bibitem[Gol84]{Goldman_symplectic_1984}
W.~Goldman.
\newblock The symplectic nature of fundamental groups of surfaces.
\newblock {\em Adv. Math.}, 54:200--225, 1984.

\bibitem[HS94]{Hass_shortening_1994}
J.~Hass and P.~Scott.
\newblock Shortening curves on surfaces.
\newblock {\em Topology}, pages 1--19, 1994.

\bibitem[LM08]{Lindenstrauss-Mirzakhani}
E.~Lindenstrauss and M.~Mirzakhani.
\newblock Ergodic theory of the space of measured laminations.
\newblock {\em Int. Math. Res. Not.}, 2008(4):Art. ID rnm126, 49, 2008.

\bibitem[Luo10]{Luo_simple_2010}
F.~Luo.
\newblock Simple loops on surfaces and their intersection numbers.
\newblock {\em J. Differential Geometry}, 85:73--115, 2010.

\bibitem[Mag81]{Magnus}
W.~Magnus.
\newblock The uses of {$2$} by {$2$} matrices in combinatorial group theory.
  {A} survey.
\newblock {\em Results Math.}, 4(2):171--192, 1981.

\bibitem[Mir16]{Mirzakhani_MCGorbits_2016}
M.~Mirzakhani.
\newblock Counting mapping class group orbits of curves on hyperbolic surfaces.
\newblock arXiv:1601.03342, 2016.

\bibitem[MS84]{Morgan-Shalen-I_1984}
J.~W. Morgan and P.~B. Shalen.
\newblock Valuations, trees, and degenerations of hyperbolic structures. {I}.
\newblock {\em Ann. of Math. (2)}, 120(3):401--476, 1984.

\bibitem[MS20]{MS_Aut(CV)_2020}
J.~March\'e and C.-L. Simon.
\newblock Automorphisms of character varieties.
\newblock {\em Annales Henri Lebesgue}, 3:217--260, 2020.

\bibitem[Ota01]{Otal_hyperbolization}
J.-P. Otal.
\newblock {\em The hyperbolization theorem for fibered 3-manifolds}, volume~7
  of {\em SMF/AMS Texts and Monographs}.
\newblock AMS ; SMF, 2001.

\bibitem[Ota15]{Otal_compact-repres_2015}
J.-P. Otal.
\newblock Compactification of spaces of representations after {C}uller,
  {M}organ and {S}halen.
\newblock In {\em Berkovich spaces and applications}, volume 2119 of {\em
  Lecture Notes in Math.}, pages 367--413. Springer, Cham, 2015.

\bibitem[Pau89]{Paulin_Gromov-R-trees_1989}
F.~Paulin.
\newblock The gromov topology of $\mathbb{R}$-trees.
\newblock {\em Topology and its Applications}, 32:197--221, 1989.

\bibitem[PP91]{Papa-Penner_symplectique-bord_1991}
A.~Papadopoulos and R.~Penner.
\newblock La forme symplectique de {W}eil-{P}etersson et le bord de {T}hurston
  de l'espace de {T}eichm\"uller.
\newblock {\em C. R. Acad. Sci. Paris}, 312(1):871--874, 1991.

\bibitem[PS00]{PrSi_sl2-skein_2000}
J.~Przytycki and A.~Sikora.
\newblock On skein algebras and {${\rm Sl}_2({\bf C})$}-character varieties.
\newblock {\em Topology}, 39(1):115--148, 2000.

\bibitem[RS17]{Rafi-Souto_counting-problems_2017}
K.~Rafi and J.~Souto.
\newblock Geodesic currents and counting problems.
\newblock arXiv:1709.06834, 2017.

\bibitem[SB01]{Sozen-Bonahon_2001}
Y.~S\"{o}zen and F.~Bonahon.
\newblock The {W}eil-{P}etersson and {T}hurston symplectic forms.
\newblock {\em Duke Math. J.}, 108(3):581--597, 2001.

\bibitem[Thu09]{Dylan_Thurston_intersection_2009}
D.~Thurston.
\newblock Geometric intersection of curves on surfaces.
\newblock preprint, 2009.

\bibitem[Thu14]{Dylan_Positive_2014}
D.~P. Thurston.
\newblock Positive basis for surface skein algebras.
\newblock {\em Proc. Natl. Acad. Sci. USA}, 111(27):9725--9732, 2014.

\bibitem[Wol11]{Wolff_compactification_2011}
M.~Wolff.
\newblock Connected components of the compactification of representation spaces
  of surface groups.
\newblock {\em Geom. Topol.}, 15(3):1225--1295, 2011.

\end{thebibliography}

\end{document}